\newtheorem{lem}{Lemma}[section]
\newtheorem{thm}[lem]{Theorem}
\newtheorem{pro}[lem]{Proposition}
\newtheorem{defn}[lem]{Definition}
\newcommand{\bqn}{\begin{equation}}
\newcommand{\eqn}{\end{equation}}
\newcommand{\beqx}{\begin{equation*}}
\newcommand{\eeqx}{\end{equation*}}
\newcommand{\barr}{\begin{array}}
\newcommand{\earr}{\end{array}}
\newcommand{\beqn}{\begin{eqnarray}}
\newcommand{\eeqn}{\end{eqnarray}}
\newcommand{\beqnx}{\begin{eqnarray*}}
\newcommand{\eeqnx}{\end{eqnarray*}}
\newcommand{\bmt}{\begin{multline}}
\newcommand{\emt}{\end{multline}}
\numberwithin{equation}{section}
\newcommand{\pa}{\partial}
\def\dim{\mathop{\rm dim}\nolimits}
\newcommand{\sF}{\mathscr{F}}
\newcommand{\sK}{{\mathscr K}}
\newcommand{\sL}{\mathscr{L}}
\newcommand{\sM}{\mathscr{M}}
\newcommand{\sO}{\mathscr{O}}
\newcommand{\sS}{\mathscr{S}}
\newcommand{\sW}{\mathscr{W}}
\newcommand{\cA}{{\mathcal A}}
\newcommand{\cC}{{\mathcal C}}
\newcommand{\cF}{{\mathcal F}}
\newcommand{\cK}{{\mathcal K}}
\newcommand{\cL}{{\mathcal L}}
\newcommand{\cS}{{\mathcal S}}
\newcommand{\cO}{{\mathcal O}}
\newcommand{\bbn}{{\mathbb N}}
\newcommand{\real}{{\mathbb R}}
\newcommand{\al}{\alpha}
\newcommand{\ga}{{\gamma}}
\newcommand{\ve}{\varepsilon}
\newcommand{\la}{\lambda}
\newcommand{\ps}{\psi}
\newcommand{\ro}{\rho}
\newcommand{\er}{\eqref}
\newcommand{\lb}{\label}
\newcommand{\qu}{\quad}
\title{High-Voltage Ionized Gas \\ 
with Spherical Cathode Emission}
\author{%
{\large\sc Walter A. Strauss${}^1$}
{\normalsize and}
{\large\sc Masahiro Suzuki${}^2$}
}
\date{%
\normalsize
${}^1$%
Department of Mathematics and Lefschetz Center for Dynamical Systems, 
Brown University,   
\\
Providence, RI 02912, USA
\\ [7pt]
${}^2$%
Department of Computer Science and Engineering, 
Nagoya Institute of Technology,
\\
Gokiso-cho, Showa-ku, Nagoya, 466-8555, Japan
}
\begin{document}

\maketitle

\begin{abstract}
 We consider a plasma that is created by a high voltage difference, which is known as a Townsend gas discharge.   The plasma is confined to the region between two concentric spheres, one of which is a cathode and the other an anode.  
 Ion-electron pairs are created by collisions inside the plasma.  Additional electrons 
 enter the plasma by collisions of ions with the cathode.
 We prove under certain conditions that there are many steady states exhibiting gas discharge, beginning with a `sparking' voltage.  
 In fact, there is an analytic one-parameter family of them that connects the non-ionized gas to a plasma 
 with arbitrarily high ionization or arbitrarily high potential, or else the family ends at an `anti-sparking' voltage.
\end{abstract}

\begin{description}

\item[{\it Keywords:}]  
Ionization, Morrow model, hyperbolic-parabolic-elliptic system, global bifurcation

\item[{\it 2020 Mathematics Subject Classification:}]
35M33, 
35B32, 
35B35, 
76X05 

\end{description}

\section{Introduction} 
\subsection{Ionization model} 
We consider the ionization of a gas  
due to a strong applied electric field.  The high voltage thereby creates a plasma  
and may induce very hot electrical arcs.  
Over a century ago Townsend experimented with a pair of parallel plates 
to which he applied a strong voltage that produces cascades of free electrons and ions.  
This phenomenon is called the Townsend discharge or avalanche. 
The minimum voltage for this to occur is called the sparking voltage.  
There are two main sources of ionization.     
The result of collisions of gas particles within the plasma is called the $\alpha$-mechanism.   The emission of electrons caused by the impacts of the ions with the cathode is called the $\gamma$-mechanism.
For more details of the physical phenomena, we refer the reader to \cite{Rai}.

In this paper the gas occupies the region $\Omega$ between two concentric spheres.   The inner sphere may be the anode $\cA$ and the outer one the cathode $\cC$, or vice-versa. Within the plasma $\Omega$ the model is as follows.  
Let $\rho_i, u_i, \rho_e, u_e$ and $-\Phi$ denote the the ion density, ion velocity, electron density, electron velocity and electrostatic potential, respectively.  
Within $\Omega$ they satisfy the 
								equations.  
\begin{subequations}\lb{Mmodel}
\begin{gather}
\partial_t\rho_i + \nabla\cdot(\rho_i {\bm u}_i)  =  k_e h(|\nabla\Phi|) \rho_e ,
\lb{eqi} \\
\partial_t\rho_e + \nabla\cdot(\rho_e {\bm u}_e)  =  k_e h(|\nabla\Phi|) \rho_e ,
\lb{eqe} \\
\Delta \Phi =\ro_i-\ro_e, 
\lb{eqp} \\
{\bm u}_i := k_i\nabla\Phi, \quad
{\bm u}_e:=-k_e \nabla\Phi-k_e \frac{\nabla \rho_e}{\rho_e}, 
\lb{equ}
\end{gather}
where 
\begin{gather}
h(\ell) := a\ell e^{-{b}/{\ell}}. 
\label{gh}
\end{gather}
The prescribed initial and boundary conditions are 
\begin{gather}
  (\ro_i,\ro_e)(0,x)=(\ro_{i0},\ro_{e0})(x),  
    \quad x\in \Omega,
  \lb{i1} \\
  \ro_i(t,x)=\ro_e(t,x)=\Phi(t,x)=0, \quad t>0, \ x \in \cA,
  \lb{ba} \\
  (\ro_e\bm{u}_{e}\cdot\bm{n})(t,x)=-\gamma (\ro_i\bm{u}_{i} \cdot\bm{n} )(t,x), \quad \Phi(t,x)=\lambda>0, \quad  t>0, \ x \in \cC.
  \lb{bc}
\end{gather}
\end{subequations}

The properties of this model depend on the six positive constants $\la, a,b,\ga,k_e,k_i$.  
Of course we assume $\rho_i\ge0$ and $\rho_e\ge0$.  
The purpose of this article is to prove that there are many solutions that are both radial and independent of time.  
The first two equations express the transport of ions and electrons, while their 
right sides express the rate per unit volume of ion--electron pairs created 
by the impacts of the electrons within $\Omega$.    
This is called the $\alpha$-mechanism.  
Specifically, the coefficient $a\exp(-b |\nabla\Phi |^{-1} )$ is 
the first Townsend ionization coefficient $\al$, 
which was determined empirically as described in 
Section 4.1 of \cite{Rai}.  It can also be found explicitly in equation (A1) of \cite{Mo1}, for instance.  
Equation \er{eqp} states that the divergence of the electric field equals the net charge.  
The equations \eqref{equ} express the impulse of the electric field ${\bf E}=-\nabla \Phi $ 
on the particles in opposite directions.  
The last term in \er{equ}  expresses the motion of the electrons due to the gradient of their density.   
However, the heavy ions barely move when impacted by electrons.  
Substituting the constitutive velocity relations \er{equ} 
into the continuity equations \er{eqi} and \er{eqe}, 
we see that the system is of hyperbolic-parabolic-elliptic type. 

As expressed in \er{ba}, the boundary conditions at the anode are due to the assumption that it is a perfect conductor.  
The cathode is negatively charged, producing a voltage difference $\lambda$.  
The secondary emission at the cathode (or so-called $\ga$-mechanism) 
is described by the first equation in \er{bc}, 
where $\ga > 0$ is average number of electrons ejected from the cathode by an ion impact.

The results of this paper are also valid for the region between two concentric cylinders if we study steady states that are constant in the direction of their central axis.  In that situation our treatment effectively deals only with the cross section $\Omega$, which is an annulus in the plane perpendicular to the axis. The analysis of the cylindrical case differs in only relatively minor technical ways from the spherical case.  For the sake of simplicity in this paper we only treat the spherical case.

\subsection{Physical and mathematical background}  
Townsend introduced the $\ga$-mechanism in order to guarantee that gas discharge occurs beyond a sparking voltage.  
Since Townsend's work many models have been proposed to describe this ionization phenomenon \cite{AB,DL1,DW,DT1,Ku1,Ku2,LRE,Mo1}.  
In 1985 Morrow \cite{Mo1} was probably the first to provide a realistic model of its detailed mechanism. 
For simplicity we limit ourselves to a single species of positive ions and   
we do not consider various other mechanisms such as `attachment' or `recombination'.  
Morrow's model is physically reliable.
Indeed, the article \cite{DL1} by Degond and Lucquin-Desreux derives the model 
from the general Euler-Maxwell system by scaling assumptions, 
in particular by assuming a very small mass ratio between the electrons and ions. 
In an appropriate limit the Morrow model is obtained 
at the end of their paper in equations (160) and (164), which we have specialized to 
assume constant temperature and no neutral particles.  See also Chapter 5 in \cite{LL}.   
 
Townsend defined the {\it sparking voltage} as the threshold of voltage at which gas discharge occurs and continues.  
Mathematically, we will define the sparking voltage $\lambda^*$  as the lowest voltage for which a certain natural linear operator \er{eqU} has a non-trivial nullspace.  
In the presence of a $\ga$-mechanism the boundary condition for \er{eqU} is 
{\it non-local} with parameters $\ga$ and $\la$.  

 There are many physical and mathematical studies of gas discharges with different configurations of anode and cathode but without any analysis of the fundamental partial differential equations.
The article \cite{LU1} investigated the gas discharges on cylinders or so-called {\it wire to wire} domains that are regions exterior to a pair of parallel cylinders. 
Durbin and Turyn \cite{DT1} numerically  simulated non-trivial steady states of ionization in coaxial concentric and accentric cylinders 
by using a similar model to Morrow's.   
Morrow \cite{Mo2} also simulated the ionization in concentric spheres using his model.
We refer the reader to \cite{TK1}, which reviews the recent progress of numerical simulations for gas ionization.

Suzuki and Tani in \cite{ST1} gave the first mathematical analysis of the Morrow model with both the $\alpha$ and $\gamma$-mechanisms in which   
they proved the time-local solvability.   
In \cite{ST2} without the $\ga$-mechanism they introduced the concept of a sparking voltage and proved the stability and instability of trivial solutions.  

Subsequently we wrote three articles that proved the global bifurcation of steady state solutions in gas ionization under various assumptions.  
\cite{SS1} and \cite{SS2} treated the  simpler geometry of a gas between two parallel plates and solutions that are constant on each parallel plane.  On the other hand, \cite{SS3} considered the region between two concentric spheres and also, to some extent, the region between two star-shaped surfaces.  Because steady states that  depend only on the radial variable between concentric spheres satisfy differential equations with variable coefficients, the solutions are much less explicit than in the case of parallel plates.  

The $\ga$-mechanism is a much more subtle phenomenon.  In the three previous articles, only \cite{SS2} considered it.  In that article the nullspace of the linearized problem is quite explicit because it satisfies an ODE with constant coefficients due to the simple geometry.  
The present paper treats the $\ga$-mechanism in  a spherical (or cylindrical) shell so that the coefficients are variable and the nullspace is not explicit.  

As already mentioned, the system of equations for the steady states is mixed hyperbolic-elliptic.  It is hyperbolic in $\rho_i$ and elliptic in $\rho_e$ and $\Phi$.    However, all known proofs of global bifurcation essentially require ellipticity.  What we are able to do is take advantage of the steady states being functions of one variable only, by substituting $\rho_i$ into the elliptic equations.  
Here we are also able to introduce a precise definition \er{spv1} of the sparking voltage in a general context. 
In contrast to Townsend's original theory, a sparking voltage may or may not exist, both with and without a $\ga$-mechanism.  
The variable coefficients (due to the geometry) together with the non-local boundary condition (due to the $\ga$-mechanism) introduce major complications to our analysis.  

\section{Main results}
In this section we state our main theorem on global bifurcation. 
We define $H$ as the harmonic function in $\Omega$ 
 with the boundary conditions $H=0$ on $\cA$ and $H=1$ on $\cC$. 
The problem \eqref{Mmodel} has the trivial stationary solution $(\rho_{i},\rho_{e},\Phi)=(0,0,\lambda H)$ for any $\la$.
For simplicity in what follows, we let
\begin{gather*}
 r_{1}=1, \quad r_{2}=2, \quad I=(1,2), \quad \cA=\{r=1\}, \quad \cC=\{r=2\},  
\end{gather*} 
where $r$ is the radial coordinate.  
In this case the harmonic function $H$ is exactly 
\begin{gather}\label{H1}
H=2\left(1-\frac{1}{r}\right) \quad \text{ so that }\ \  \partial_{r} H =\frac{2}{r^{2}}.
\end{gather}
We also define 
\begin{gather*}    
g(\ell) := h(\ell)-\frac{\ell^2}4 = a\ell e^{-b/\ell} - \frac{\ell^2}4  \quad\text{ so that }\ \  
g\left(\la \partial_{r} H \right)  = a\frac{2\la}{r^2} e^{-\frac{br^2}{2\la}} - \frac{\la^2}{r^4}.  
\end{gather*}

It will turn out that a suitable approximation of the pair of equations \er{eqe}, \er{equ}  after linearization  is the following {\it electron system}:  
\begin{subequations}\label{eqU}
\begin{gather}  \label{eqUa} 
-\frac{1}{r^{2}} \partial_{r} ( r^{2} \partial_{r} \varphi_{e} ) - g(\lambda \partial_{r} H) \varphi_{e}=0,
\\
\varphi_{e}(1)=0,\   \text{ and }\  \varphi_{e}(r)>0\ \ \text{ in }\ I, \label{eqUb}\\
B(\lambda,\varphi_{e}):=\partial_{r} \varphi_{e}(2) + \frac{\lambda}{4} \varphi_{e}(2) 
- a \gamma  \frac{\la}{2} e^{-\frac{\la}{2}} \int_{1}^{2}  
e^{-\frac{b r^{2}}{2 \la}}e^{\frac{\la}{r}} \varphi_{e}(r) \,dr=0.  \label{eqUc}
\end{gather}
\end{subequations} 
Because \eqref{eqUa} is a linear ODE and $u(1)=0$, 
its set of solutions is at most one-dimensional.  
Moreover, only for special values of the parameters could \er{eqUc} also be satisfied. 
It is particularly nontrivial to find such a special value, because the operator $-\Delta-g(\cdot)$ together with the `boundary' conditions $\varphi_{e}(1)=0$ and $B(\la,\varphi_{e})=0$ is not self-adjoint.
					\begin{defn}  
					The set of parameters $a,b,\ga$ 
for which there is a sparking voltage is defined as 
\begin{gather*} 
\sS:=\left\{(a, b, \gamma) \, | \, 
\text{$\exists$ pair $(\lambda,\varphi_{e}) \in \mathbb R_{+} \times H^2(I)$ that solves the electron system  \eqref{eqU}} \right\}.
\end{gather*}  
\end{defn}
We call $\sS$ the {\it sparking parameter set}.  
For each $(a,b,\ga)\in \cS$,  we define its {\it sparking voltage} $\lambda^{\dagger}$ by  
\begin{gather}
\la^\dagger := \inf\left\{\la>0\ |\ 
\exists\  \varphi_{e}\in H^2(I) \text{ so that } (\la, \varphi_{e}) \text{ solves  \eqref{eqU}}\right\}.
\label{spv1}
\end{gather}
Thus $0\le\la^\dagger<\infty$.  
{\it We will say that the sparking voltage exists if $(a,b,\ga)\in \cS$.  }

				\begin{lem}
The sparking voltage $\la^\dagger$ is positive for every $(a,b,\ga)\in \cS$.  
\end{lem}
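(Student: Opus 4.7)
The plan is to rule out any nontrivial solution of \er{eqU} for sufficiently small $\la$ by an energy identity, which will force $\la^\dagger$ to be bounded below by a positive constant depending on $(a,b,\ga)$.  Suppose $(\la,\vphi_e)$ solves the electron system.  I would multiply the ODE \er{eqUa} by $r^2\vphi_e$ and integrate over $I$.  One integration by parts kills the boundary term at $r=1$ because $\vphi_e(1)=0$, while the boundary term at $r=2$ is eliminated using the non-local condition \er{eqUc}.  Inserting the explicit formula $g(\la\,\pa_r H)\,r^2=2a\la e^{-br^2/(2\la)}-\la^2/r^2$ gives the identity
\begin{multline*}
\la\,\vphi_e(2)^2+\la^2\int_1^2\frac{\vphi_e^2}{r^2}\,dr+\int_1^2 r^2(\vphi_e')^2\,dr \\
=2a\la\int_1^2 e^{-br^2/(2\la)}\vphi_e^2\,dr+2a\ga\la\,\vphi_e(2)\,e^{-\la/2}\int_1^2 e^{-br^2/(2\la)}e^{\la/r}\vphi_e(r)\,dr.
\end{multline*}

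Next I would estimate the two sides separately.  The left-hand side is a sum of three non-negative terms, the last of which controls $\|\vphi_e\|_{H^1(I)}^2$: using $\vphi_e(1)=0$ together with the Poincar\'e inequality on $I$, one has $\int_1^2 r^2(\vphi_e')^2\,dr\ge c_0\,\|\vphi_e\|_{H^1(I)}^2$ for some absolute constant $c_0>0$.  On the right, the exponential weights satisfy $e^{-br^2/(2\la)}\le e^{-b/(2\la)}$ and $e^{\la/r}\le e^\la$ on $[1,2]$, and the boundary factor $\vphi_e(2)$ is controlled by $\|\vphi_e'\|_{L^2(I)}$ via the standard trace estimate.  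Combining these with Cauchy--Schwarz yields a bound of the form
\[
\text{RHS}\ \le\ C(a,\ga)\,\la\,e^{\la/2-b/(2\la)}\,\|\vphi_e\|_{H^1(I)}^2,
\]
in which the prefactor tends to $0$ super-exponentially as $\la\to 0^+$.

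Choosing $\la_0=\la_0(a,b,\ga)>0$ small enough that $C\,\la\,e^{\la/2-b/(2\la)}<c_0$ on $(0,\la_0)$, the identity then forces $\|\vphi_e\|_{H^1(I)}=0$, contradicting $\vphi_e>0$ on $I$.  Hence no solution of \er{eqU} exists with $\la<\la_0$, and the definition \er{spv1} gives $\la^\dagger\ge\la_0>0$.

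The only subtle point is the non-local boundary integral produced by the $\ga$-mechanism, which at first sight might be feared to spoil a clean positivity structure.  The decisive observation is that both the ionization coefficient inside $g$ and the boundary integral carry the same weight $e^{-br^2/(2\la)}$, and this weight vanishes much faster than any of the other factors can grow as $\la\to 0^+$.  That is precisely what allows the right-hand side to be absorbed into the coercive left-hand side.
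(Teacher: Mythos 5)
Your argument is correct and is essentially the paper's own proof: multiply \eqref{eqUa} by $r^2\varphi_e$, integrate, use $\varphi_e(1)=0$ together with the non-local condition \eqref{eqUc} to handle the boundary terms, and absorb the right-hand side into the coercive gradient term for small $\lambda$, forcing $\varphi_e\equiv 0$. Your version is merely more explicit (writing out the full energy identity and exploiting the extra $e^{-b/(2\lambda)}$ decay, where the paper only needs the factor $\lambda$), but the mechanism is identical.
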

				\begin{proof}   
Suppose $0< \la \leq 1$.
We multiply \eqref{eqUa} by $r^2 \varphi_{e}$, integrate.  
Using also $\varphi_{e}(1)=0$ and \eqref{eqUc}, we  obtain 
\begin{align*}   
\|\varphi_{e}'\|^2_{L^2} & \le C_1 \la (|\varphi_{e}(2)|+\|\varphi_{e}\|_{L^{2}})|\varphi_{e}(2)| + C_2 \la \|\varphi_{e}\|^2_{L^2} \le C_3\la \|\varphi_{e} '\|_{L^2}^{2},
\end{align*}
where we have also used Sobolev's and Poincar\'e's inequalities.
Clearly, if $\la$ sufficiently small, we deduce $\|\varphi_{e}'\|^2_{L^2}=0$.
Since $\varphi_{e}(1)=0$, we have $\varphi_{e} \equiv 0$. 
Thus \eqref{eqU} has no solution for small enough $\la$, so that $\la^\dagger>0$.  
\end{proof}

Assuming that the sparking voltage does exist 
and the transversality condition \eqref{TC1} given below does hold,
we will prove that {\it there exist many other steady solutions}, in fact a whole global curve of them. 

\begin{defn}
We define two Hilbert spaces of radial functions in the interval $I:=(r_{1},r_{2})$ as  
\begin{align}    
   X := &  \{(f_1,f_2,f_3) \in H^1(I)\times H^2(I)\times H^3(I) \ \big|  \ \ 
f_1|_\cA=  f_2|_\cA=  f_3|_\cA=f_3|_\cC=0  \},    \label{spaceX}
\\
Y:= & L^{2}(I) \times L^{2}(I) \times H^{1}(I) \times\mathbb R,  \label{spaceY}
\end{align}
where $L^{2}(I)$ and $H^{k}(I)$ denote the Lebesgue and Sobolev spaces equipped the following inner products with the {\it weight } $r^{2}$ 
(naturally coming from the reduction of $x\in\real^3$ to $r\in\real$):  
\begin{gather}\label{inner-product1}
\langle u,v \rangle:= \int_{r_{1}}^{r_{2}} r^{2} uv \, dr, \quad 
\langle u,v \rangle_{H^{k}}:= \int_{r_{1}}^{r_{2}} r^{2} uv \, dr+ \sum_{j=1}^{k}\int_{r_{1}}^{r_{2}} r^{2} (\partial_{r}^{j}u) (\partial_{r}^{j} v) \, dr.
\end{gather}
\end{defn}

\begin{thm} \label{mainthm0}
Assume that the sparking voltage $\lambda^\dagger$ exists, and the transversality condition \eqref{TC1} holds at the sparking voltage $\lambda^{\dagger}$.
Then there exists a unique continuous one-parameter family $\mathcal{K}$ (an analytic  curve) 
of radial steady solutions $(\rho_{i},\rho_{e},\Phi) \in X$ of the problem \eqref{Mmodel} with the following properties.  
Both densities are positive in $\Omega$, 
the curve begins at the trivial solution $(0,0,\lambda^{\dagger} H)$ with voltage $\lambda^\dagger$ 
and it `ends' with one of the following three alternatives: 

Either (i) the density $\sup_{r \in I} (\rho_i + \rho_e)(r)$ becomes unbounded along $\sK$, 

Or (ii) the potential $\sup_{r \in I} \Phi(r)$ becomes unbounded along $\sK$, 

Or (iii) the curve ends at a different trivial solution with some voltage $\lambda^\ddagger > \lambda^\dagger$.
\end{thm}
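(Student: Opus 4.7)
The plan is to recast the steady radial problem as an equation $F(\lambda,u)=0$ on $\mathbb{R}\times X$ with values in $Y$, verify local bifurcation at $(\lambda^\dagger,0,0,\lambda^\dagger H)$ by Crandall--Rabinowitz, and then extend globally using the analytic global bifurcation theorem of Dancer (see Buffoni--Toland). The three alternatives in the theorem are exactly the three allowed terminal behaviors of a maximal analytic curve of Fredholm zeros, once one has ruled out loss of properness and self-intersections.

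First I would reduce the ion equation. For radial steady states $\partial_r(r^2\rho_i u_i)=r^2 k_e h(|\partial_r\Phi|)\rho_e$ with $\rho_i(1)=0$ is pure transport and can be integrated to give $\rho_i$ explicitly as an analytic functional of $(\rho_e,\Phi)$; substituting into Poisson removes the hyperbolic character and yields a coupled elliptic system in $(\rho_e,\Phi)$ together with the nonlocal boundary functional $B$ from \eqref{eqUc}. I would take as components of $F$ the ion ODE, the electron ODE (after substituting $\bm u_e$), the Poisson equation, and the cathode flux boundary functional, so $F:\mathbb{R}\times X\to Y$ with $Y$ having its scalar slot carrying $B$. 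Analyticity of $F$ on a neighborhood of the trivial branch follows since $h$ is real-analytic where its argument is bounded away from zero and $\partial_r(\lambda H)=2\lambda/r^{2}>0$.

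Second, I would compute $D_uF(\lambda,0,0,\lambda H)$. The linearization decouples: the $\rho_i$-component gives a first-order ODE solved explicitly from the $\rho_e$-component; the $\Phi$-component is a standard Dirichlet problem for the radial Laplacian with $\rho_i-\rho_e$ as source (Fredholm of index zero by compact embedding); and the $\rho_e$-component together with the boundary functional reduces, after using the explicit form \eqref{H1}, to precisely the electron system \eqref{eqU}. Hence $\ker D_uF(\lambda,\cdot)$ is nontrivial exactly when $\lambda$ lies in the sparking set, and by the hypothesis together with the ODE structure of \eqref{eqUa}--\eqref{eqUb} (which is at most one-dimensional), the kernel at $\lambda^\dagger$ is one-dimensional, spanned by a $\varphi_e>0$. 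Fredholm index zero is inherited from the elliptic parts. The transversality condition \eqref{TC1} ensures $\partial_\lambda D_uF$ applied to this kernel generator does not lie in the range, so the analytic Crandall--Rabinowitz theorem produces a local analytic branch $\mathcal{K}_{\mathrm{loc}}$ on which $\rho_e>0$ in $I$, whence $\rho_i>0$ as well by the integral representation from the transport step.

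Third, I would globalize via Dancer's analytic continuation theorem: $\mathcal{K}_{\mathrm{loc}}$ extends to a maximal connected analytic curve $\mathcal{K}$ of solutions such that one of three generic alternatives holds --- $\mathcal{K}$ is unbounded in $\mathbb{R}\times X$, $\mathcal{K}$ loses properness (a sequence of solutions leaves every compact set of $\mathbb{R}\times X$ while staying bounded), or $\mathcal{K}$ is a closed analytic loop. To translate these into (i)--(iii), I would establish a compactness statement: as long as $\sup_I(\rho_i+\rho_e)$ and $\sup_I\Phi$ remain bounded along $\mathcal{K}$, elliptic regularity applied to the reduced system gives uniform $X$-bounds, which forces $\lambda$ to remain bounded through the Dirichlet data for $\Phi$. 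Thus unboundedness of $\mathcal{K}$ must manifest as blow-up of the density sup (alternative (i)) or of the potential sup (alternative (ii)). Positivity of $\rho_e,\rho_i$ on $\mathcal{K}$ is preserved because it is an open condition on the connected analytic set, and nontriviality likewise (positivity of $\rho_e$ at an interior point is preserved until the branch returns to a trivial solution). A loop is impossible since returning to a trivial solution at the same $\lambda^\dagger$ can be excluded by the local uniqueness from Crandall--Rabinowitz together with transversality, so a return must occur at some $\lambda^\ddagger\ne\lambda^\dagger$; showing $\lambda^\ddagger>\lambda^\dagger$ uses that $\lambda^\dagger$ is the \emph{infimum} in \eqref{spv1}, so no eigenvalue lies strictly below it.

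The main obstacle will be step two: because the $\gamma$-mechanism makes the boundary functional $B$ nonlocal and the linear operator $-\Delta-g(\lambda\partial_rH)$ with boundary data $(\varphi_e(1),B(\lambda,\varphi_e))=0$ is non-self-adjoint, one cannot read off simplicity of the eigenvalue or the range characterization from a variational principle. I would handle this by exploiting the one-dimensionality built into the ODE \eqref{eqUa}--\eqref{eqUb} (a shooting argument makes $\ker$ automatically at most one-dimensional) and by identifying the range explicitly via the formal adjoint boundary value problem, which is what the transversality condition \eqref{TC1} is tailored to control. A secondary technical point is verifying the Fredholm property and analyticity of the whole map uniformly along the branch when $\lambda$ or the densities grow, which is what ultimately forces the three alternatives to be the only possibilities.
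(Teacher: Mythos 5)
Your overall strategy coincides with the paper's: integrate the ion transport equation to eliminate $\rho_i$, linearize to obtain the electron system \eqref{eqU}, identify the range via the formal adjoint, and run an analytic global bifurcation theorem (the paper uses \cite{CS1} on the open set $\sO$ of \eqref{Def-sO} rather than Dancer/Buffoni--Toland, but this is the same machinery). However, two substantive steps are missing.

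First, you assert that the kernel of the linearization at $\lambda=\lambda^\dagger$ is one-dimensional ``by the hypothesis together with the ODE structure,'' but the hypothesis only says that \eqref{eqU} is solvable for \emph{some} $\lambda$, and $\lambda^\dagger$ is defined in \eqref{spv1} as an \emph{infimum}. The nontrivial point is that this infimum is attained, i.e.\ that the kernel is nonzero at $\lambda^\dagger$ itself; the ODE structure only gives ``at most one-dimensional.'' The paper proves attainment in Lemma \ref{Null1} by normalizing solutions at $\lambda_n\downarrow\lambda^\dagger$ in $L^2$, deriving a uniform $H^2$ bound from the equation together with the nonlocal boundary condition, and passing to a weak limit which is then shown to be positive. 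Without this, the bifurcation point is not identified.

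Second, your passage from the abstract alternatives to (i)--(iii) hides the real difficulty. Since $h(|\ell|)=a|\ell|e^{-b/|\ell|}$ is not analytic at $\ell=0$, the map $\sF$ is analytic only where $\partial_r\Phi=\partial_rV+\lambda\partial_rH$ is bounded away from zero and $\lambda>0$, so the global theorem carries the extra alternatives that $\inf_r\partial_r\Phi(s,\cdot)\to0$ or $\lambda(s)\to0$ along the curve (alternatives (a), (b) of Theorem \ref{Global1}). Your claim that bounded $\sup(\rho_i+\rho_e)$ and $\sup\Phi$ yield uniform $X$-bounds tacitly divides by $\partial_r\Phi$ in the integral representation \eqref{R_i1} of $\rho_i$ and therefore presupposes exactly what must be proved. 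Showing that these degeneracies force blow-up of the quantities in (i)--(ii) is the content of Lemmas \ref{lem(a)} and \ref{lem(b)}, the latter requiring a case analysis at the first zero $r_*$ of the limiting field and a Gronwall-type estimate exploiting the factor $e^{-b/|\partial_r\Phi|}$. Smaller gaps: the fact that the solution is exactly trivial at the first loss of positivity (needed for alternative (iii)) requires the maximum-principle/uniqueness argument of Theorem \ref{Global2}, including the boundary case $r_0=2$ where the $\gamma$-mechanism condition \eqref{bc0} enters; and excluding $\lambda^\ddagger=\lambda^\dagger$ is achieved not by local uniqueness alone but by observing that a return to the same trivial point would force the curve to coincide with the $s<0$ branch, on which the densities are negative.
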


The {\it transversality condition} is the standard one that appears in local bifurcation theory.  
We will show in Section \ref{LinearPro} that this standard condition 
can be written explicitly as 
\begin{gather}\label{TC1}
F(a,b,\gamma )\neq 0,
\end{gather}
where
\begin{align}    
F(a,b,\gamma)&:=-{\gamma} e^{\frac{\la}{2}}
\int_1^2 r^{2} \psi_e(2) \left\{ h'\left( \lambda \partial_{r} H \right) (\partial_{r} H) - h\left( \lambda \partial_{r} H \right)\frac{H}{2} \right\}e^{-\frac{\lambda }{2}H}\varphi_e \,dr 
   \notag\\
&\quad -\int_1^2 r^{2} \psi_{e} g'(\lambda \partial_{r} H) (\partial_{r} H) \varphi_e \, dr+\psi_e(2)          
\left\{\varphi_e(2) - 2\partial_r\varphi_e (2)  - \frac{\lambda}  {2}\varphi_e(2)\right\}.
\label{transversality0}
\end{align}
Here $\varphi_{e}$ is a solution of the electron system \eqref{eqU}, and 
$\psi_{e}$ is a solution of the following boundary value problem:
\begin{subequations}              \label{eqW}
\begin{gather}
-\tfrac{1}{r^{2}} \partial_{r} ( r^{2} \partial_{r} \psi_e )-g(\lambda \partial_{r} H)\psi_e
-h(\lambda\partial_{r} H)e^{-\frac{\lambda}{2}H} {\gamma} e^{\frac{\la}{2}}\psi_e(2)=0,
\label{PhiEq10}\\
\psi_e(1)=\partial_{r} \psi_e(2) + \tfrac{\lambda}{4} \psi_e(2)=0.
\end{gather}
\end{subequations}
In Section \ref{LinearPro} we will also show that \eqref{eqW} is the adjoint problem of \eqref{eqU} 
and that the set of solutions of \eqref{eqW} is at most one-dimensional.  


Of course, it is important to know that the assumptions (existence of a sparking voltage and transversality)
in Theorem \ref {mainthm0} are indeed satisfied for a significant set of parameters.  
The following theorem provides such a set of parameters with large enough cathode emission.  

\begin{thm}\label{SparkingVoltage}
The sparking parameter set $\cS$ contains the set $\Gamma$ 
   defined as
\begin{gather*}
\Gamma :=\left\{ (a, b, \gamma) \, ; \,  \gamma>\frac 1a,\ \ b>\frac 4ea\right\}. 
\end{gather*}
Furthermore, for almost every $(a, b, \gamma)\in \Gamma$, the transversality condition \eqref{TC1} holds at the sparking voltage $\lambda^{\dagger}=\lambda^{\dagger}(a,b,\gamma)$.
\end{thm}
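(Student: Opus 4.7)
The hypothesis $b > 4a/e$ is equivalent to $\min_{\ell>0}\ell e^{b/\ell}= be > 4a$, which forces $g(\ell) < 0$ for every $\ell > 0$ since $g(\ell) < 0 \iff \ell e^{b/\ell}>4a$. With the substitution $\psi := r\varphi_e$, equation \eqref{eqUa} becomes the ODE $\psi''=-g(2\lambda/r^2)\psi$ on $I$ with $\psi(1)=0$. Normalizing $\varphi_e'(1)=\psi'(1)=1$, the positive sign of $-g$ makes $\psi$ convex wherever positive, so $\psi>0$ on $(1,2]$ and the positivity condition in \eqref{eqUb} holds automatically for every $\lambda>0$. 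It therefore suffices to arrange the non-local boundary condition \eqref{eqUc}.

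To that end, multiplying \eqref{eqUa} by $r^2 e^{\lambda/r}$ and integrating by parts twice, the $\lambda^2/r^4$ contribution to $-g$ cancels against the resulting boundary terms and one obtains the exact identity
\begin{equation*}
  B(\lambda,\varphi_e)= (1+\gamma)\Bigl[\varphi_e'(2)+\tfrac{\lambda}{4}\varphi_e(2)\Bigr] -\tfrac{\gamma}{4}e^{\lambda/2}\varphi_e'(1).
\end{equation*}
Setting $T(\lambda):= 4e^{-\lambda/2}\bigl[\varphi_e'(2;\lambda)+\tfrac{\lambda}{4}\varphi_e(2;\lambda)\bigr]$, the condition \eqref{eqUc} becomes $T(\lambda)=\gamma/(1+\gamma)$. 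As $\lambda\to 0^+$ the ODE degenerates to $(r^2\varphi_e')'=0$, so $\varphi_e=1-1/r$ and $T(0)=1$. As $\lambda\to\infty$, the WKB approximation based on $\sqrt{-g(2\lambda/r^2)}=\lambda/r^2 - a\,e^{-br^2/(2\lambda)} + O(1/\lambda)$ yields $T(\lambda)\to e^{-a}$. The hypothesis $\gamma>1/a$ together with the elementary inequality $e^a>1+a$ gives $\gamma/(1+\gamma)>1/(1+a)>e^{-a}$, so $\gamma/(1+\gamma)$ lies strictly between $T(\infty)$ and $T(0)$ and the intermediate value theorem produces $\lambda^\dagger > 0$ with $T(\lambda^\dagger)=\gamma/(1+\gamma)$. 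This establishes $\Gamma\subset\cS$.

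For the almost-everywhere transversality statement, standard analytic dependence for linear ODEs makes $\varphi_e(\cdot;\lambda)$, together with the adjoint solution $\psi_e$ of \eqref{eqW} (which spans a one-dimensional space by the same uniqueness argument), real-analytic in $(\lambda,a,b,\gamma)$. Off the set where $T'(\lambda^\dagger)=0$, which by analyticity of $T$ is a locally finite union of analytic hypersurfaces and thus of measure zero in $\Gamma$, the implicit function theorem realises $\lambda^\dagger$ as a real-analytic function of $(a,b,\gamma)$, and hence so is $F(a,b,\gamma)$ as defined in \eqref{transversality0}. It then suffices to exhibit a single point of $\Gamma$ at which $F\ne 0$; I would do so by computing the leading-order WKB contribution in a regime with $\lambda^\dagger\to\infty$ (attainable, say, along $\gamma\downarrow 1/(e^a-1)$ with $(a,b)$ fixed), where the explicit sinh-type expressions for $\varphi_e$ and $\psi_e$ give a computable non-zero dominant term. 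Since the zero set of a nontrivial real-analytic function on an open subset of $\mathbb R^3$ has Lebesgue measure zero, \eqref{TC1} holds for almost every $(a,b,\gamma)\in\Gamma$.

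The main technical obstacle is the rigorous justification of the WKB asymptotic $T(\lambda)\to e^{-a}$, because both linearly independent WKB branches vanish at the same rate at the endpoint $r=1$ and must be matched there. A clean way around this is the gauge $\psi(r)=e^{-a(r-1)}\Psi(r)$, which reduces the equation to a small perturbation of the exactly solvable model $\Psi''=(\lambda^2/r^4)\Psi$ (whose relevant solution is $\Psi(r)=r\sinh(\lambda(1-1/r))/\lambda$); the factor $e^{-a(r-1)}$ is responsible for the $e^{-a}$ appearing in the limit of $T$. The same gauge supplies the explicit leading-order asymptotics required to verify $F\not\equiv 0$ in the transversality step.
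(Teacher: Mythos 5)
Your integration-by-parts identity is correct --- multiplying \eqref{eqUa} by $r^2e^{\lambda/r}$ and integrating twice does make the $\lambda^2/r^4$ part of $g$ cancel, and with $\varphi_e(1)=0$ one gets exactly $B(\lambda,\varphi_e)=(1+\gamma)\bigl[\varphi_e'(2)+\tfrac{\lambda}{4}\varphi_e(2)\bigr]-\tfrac{\gamma}{4}e^{\lambda/2}\varphi_e'(1)$ (it checks at $\lambda=0$, where both sides equal $1/4$). This is a genuinely different and elegant reduction: it converts the non-local condition \eqref{eqUc} into a purely local one, $T(\lambda)=\gamma/(1+\gamma)$, which the paper never does. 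The paper instead keeps $B$ non-local and argues by comparison: it introduces the explicit solution $U$ of the model problem $U''+\tfrac2rU'-\tfrac{\lambda^2}{r^4}U=0$, $U(1)=0$, $U'(2)=1$, computes $B(\lambda,U)\le\tfrac{\alpha_\lambda}{2}\{1-\gamma a+\gamma a\delta+O(1/\lambda)\}<0$ directly, proves $u\ge U$ by the maximum principle (so the integral term can be dropped one-sidedly), and controls $u-U$ by an energy estimate of size $O(1/\lambda)$. The point of that architecture is that only a \emph{one-sided} bound on $B$ at large $\lambda$ is ever needed. Your proposal instead rests on the two-sided asymptotic $T(\lambda)\to e^{-a}$, which you yourself identify as ``the main technical obstacle'' and do not prove; the gauge $\psi=e^{-a(r-1)}\Psi$ removes the $-2a\lambda/r^2$ term only along the growing WKB branch (for the decaying branch the terms add rather than cancel), so the matching at $r=1$ is not a routine perturbation argument. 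This is the analytic heart of the existence claim and it is missing. Note that you do not actually need the limit: $\limsup_{\lambda\to\infty}T(\lambda)<\gamma/(1+\gamma)$ suffices, and that one-sided bound can be imported wholesale from the paper's $U$-comparison; combined with your identity and $T(0)=1$ this would close the existence part cleanly.

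The transversality half also has a real gap. You assert that $\{T'(\lambda^\dagger)=0\}$ is ``a locally finite union of analytic hypersurfaces,'' but $\lambda^\dagger$ is defined as an infimum and is not a priori analytic (or even continuous) in $(a,b,\gamma)$, so the composite $T'(\lambda^\dagger(\cdot))$ is not obviously an analytic function whose zero set you can control this way. The paper avoids this by working on the set $Z$ where both $B=0$ and $\partial_\lambda B=0$ hold, eliminating $\gamma$ between the two equations so that $Z$ becomes a graph over $(a,b)$, and concluding $|Z|=0$ by Fubini--Tonelli; you would need that (or an equivalent) argument. Separately, ``exhibit a single point where $F\neq0$'' only kills the zero set on the connected component containing that point, and your verification at that point is again only a WKB sketch. (To be fair, the paper itself is terse about $\widetilde F\not\equiv0$, so your instinct to actually verify non-vanishing somewhere is sound --- but as written it is a promissory note, not a proof.)
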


This paper is organized as follows.
In Section 3 we study the linearized problem to be able to apply the global bifurcation theorem in \cite{CS1}.
Specifically, in subsections \ref{S3.1} and \ref{S3.2}, we investigate the nullspace and the range of the linearized operator $\cL$ making use of the adjoint operator. 
In particular,  the electron system \eqref{eqU} plays an essential role in determining that the linearized operator has zero eigenvalue.
In subsection \ref{S5}, we formulate the transversality condition as shown in \eqref{TC1}.
Then in Section 4 we apply the global bifurcation technique to construct a {\it global} curve $\sK$ of steady solutions $(\rho_i,\rho_e, \Phi)$.  
The ellipticity of the system plays a crucial role.
The general properties of this global curve are summarized in Theorem \ref{Global1}. 
The curve may include mathematical solutions with positive densities as well as solutions with negative ``densities''.  
In Section \ref{SPD} we naturally restrict our attention to positive densities.  
Further analysis of the possible ways that the curve may ``terminate'' is then  provided.  
The main conclusion (outlined  in Theorem \ref{mainthm0}) is given in Theorem  \ref{Global2}.  
In case the voltage becomes unbounded, it is proven in Section \ref{SAHV} that the densities tend to zero.  
Finally, Section \ref{SVDCE} provides the proof of Theorem \ref{SparkingVoltage}, which exhibits a significant set of parameters for which both sparking voltage and transversality conditions are verified.

We briefly discuss the analytical challenges encountered.
The authors' previous works \cite{SS1} and \cite{SS2} only treated
parallel plates, for which the stationary states satisfy an ordinary differential equation with constant coefficients.
In contrast, the present paper addresses ionization within spherical shells and cylindrical shells. 
Consequently, the linearized operator now possesses variable coefficients, so that its nullspace is no longer explicit.
In our other previous work \cite{SS3}, we also investigated spherical and cylindrical shells without considering secondary emission at the cathode. In that case, the boundary condition \eqref{eqUc} in the electron system \eqref{eqU} is replaced by $\varphi_{e}(2) = 0$, rendering the operator $-\Delta - g(\cdot)$ self-adjoint under the conditions $\varphi_{e}(1) = \varphi_{e}(2) = 0$. This self-adjointness allows for the application of the Rayleigh quotient to determine that the linearized operator has a zero eigenvalue.
However, as noted above, when the boundary conditions are $\varphi_{e}(1) = 0$ and $B(\lambda, \varphi_{e}) = 0$, the operator $-\Delta - g(\cdot)$ becomes non-self-adjoint. As a result, 
alternative analytical approaches are required for the eigenvalue analysis.
Furthermore, Theorem \ref{SparkingVoltage}, which is based on on the cathode emission, is delicate and entirely new.


\section{Linearized problem}\label{LinearPro}  

It is convenient to rewrite the initial--boundary value problem \er{Mmodel} 
in terms of the 
modified functions $(\rho_{i },R_{e},V)$, where
\begin{gather}\label{newf0}
R_e:=\ro_e e^{\frac{\lambda}{2}H}.
\end{gather}
We decompose the electrostatic potential as
\bqn   \label{Phi} 
 \Phi=V+\lambda H  
\eqn 
where $V$ solves $\Delta V = \rho_i-\rho_e$ and vanishes on $\partial\Omega$.

We rewrite the stationary system ($\pa_t=0$)  in three dimensions as follows.  
We substitute \er{newf0} and \er{Phi} into the system \er{Mmodel}.  
We also substitute \er{equ} into \er{eqi} and \er{eqe}.  This brings us to the system 
\begin{equation}\lb{sp0}
\sF_j(\lambda,\rho_i,R_e,V)=0 \qu \text{for $j=1,2,3,4$,}  
\end{equation}
 where
\begin{align*}
\sF_1(\lambda,\rho_i,R_e,V)
&:=\frac{k_i}{r^{2}} \partial_{r} \left\{ r^{2} \rho_i  \left(\partial_{r} V+ \la\partial_{r} H \right) \right\}
-k_eh\left(\left| \partial_{r} V+ \la\partial_{r} H \right|\right)e^{-\frac{\lambda}{2}H}R_e,
			\\
\sF_2(\lambda,\rho_i,R_e,V)
&:=-\frac{1}{r^{2}} \partial_{r} ( r^{2} \partial_{r} R_e )
-\partial_{r} V \partial_{r} R_{e}
			\\
&\quad +\left\{\frac{\lambda}{2} \partial_{r} V \partial_{r} H
-\frac{1}{r^{2}} \partial_{r} ( r^{2} \partial_{r} V)
+\frac{\lambda^2}{4}|\partial_{r} H|^{2}
-h\left(\left| \partial_{r} V+ \la\partial_{r} H \right|\right)\right\}R_e,
			\\
\sF_3(\lambda,\rho_i,R_e,V)
&:=\frac{1}{r^{2}} \partial_{r} ( r^{2} \partial_{r}  V) - \rho_i + e^{-\frac{\lambda}{2}H}R_e,
			\\
\sF_4(\lambda,\rho_i,R_e,V)&:= \left\{ \left. \partial_{r} R_{e} +\left(\frac{\lambda}{2}\partial_{r} H + \partial_{r} V \right) R_{e} - \gamma \frac{k_{i}}{k_{e}} e^{\frac{\lambda}{2}H} \left(\partial_{r} V + {\lambda}\partial_{r} H \right) \rho_{i } \right\}\right|_{\cC}.  
\end{align*}

Some of the boundary conditions are encoded in the space $X$.  
It is easy to see that this stationary system has the trivial solution
$(\rho_i,R_e,V)=(0,0,0)$ for any $\lambda>0$, which corresponds to $(\rho_i,\rho_e,\Phi)=(0,0,\lambda H)$.
It is also clear that $\cF := (\cF_1,\cF_2, \cF_3,\cF_4)$ maps $(0,\infty)\times X$ into $Y$ 
continuously and that $\cF$ is quadratically bounded:
\begin{gather*}
\| \cF(\la,\rho_i,R_e,V)\|_Y   \le   C(\la)  \{ 1 + \|V\|_{H^3}\}  
\{1 + \|\rho_i\|_{H^1}  +  \|R_e\|_{H^2} \}, 
\end{gather*}
where $C(\la)$ is a constant that depends only on $\la$.

In order to prove bifurcation, we must study the linearized problem at the bifurcation point, which in our case is the trivial solution.  
The bifurcation parameter is $\la$.
For any $\la>0$, let $\sL= \partial_{(\rho_i,R_e,V)}\sF(\lambda,0,0,0)$ denote the linearized operator around the trivial solution $(\rho_i,R_e,V)=(0,0,0)$.
We regard $\sL$ as an unbounded operator on 
$L^2\times L^2\times L^2$ with its domain $D(\sL) = X \subset  H^1\times H^2\times H^3$ 
and with its range in $Y$.  The spaces $X$ and $Y$ were defined in \eqref{spaceX} and \eqref{spaceY}.
From \eqref{sp0}   it is clear that $\sL=(\sL_{1},\sL_{2},\sL_{3},\sL_4)$, where
\begin{subequations}\label{LinearEq0}
\begin{align}
\sL_{1} (S_i,S_e)         
&:=  2\frac{k_i}{r^{2}} \lambda \partial_{r} S_i
-k_eh\left( \lambda \partial_{r} H \right)e^{-\frac{\lambda}{2}H}S_e,
\label{LinearEq1} \\
\sL_{2} (S_e)         
&:=-\frac{1}{r^{2}} \partial_{r} ( r^{2} \partial_{r} S_e ) - g(\lambda \partial_{r} H)S_e,
\label{LinearEq2} \\
\sL_{3} (S_i,S_e,W)        
&:=\frac{1}{r^{2}} \partial_{r} ( r^{2} \partial_{r} W) - S_i + e^{-\frac{\lambda}{2}H}S_e,
\label{LinearEq3} \\
\sL_4(S_i,S_e)&:= \partial_{r} S_{e}(2) + \frac{\lambda}{4} S_{e}(2) - \gamma \frac{k_{i}}{k_{e}} e^{\frac{\lambda}{2}} \frac{\lambda}{2} S_{i }(2).
\label{LinearEq4}
\end{align}
\end{subequations}
In addition to \eqref{LinearEq4}, the boundary conditions are as follows:
\begin{gather}\label{BC0}
S_{i}(1)=S_{e}(1)=W(1)=W(2)=0.
\end{gather}

Let us consider the nullspace $N(\cL) \subset X$.  
Integrating the first equation $\sL_{1} (S_i,S_e)=0$ together with $S_{i }(1)=0$, we see  that  the fourth equation $\sL_4(S_i,S_e)=0$  can be written as 
\begin{align}
\partial_{r} S_{e}(2) + \frac{\lambda}{4} S_{e}(2) 
&= \gamma \frac{1}{4} e^{\frac{\lambda}{2}} \int_{1}^{2} r^{2}h\left(\frac{2\lambda}{r^{2}}\right)e^{-\frac{\lambda }{2}(2-\frac{2}{r})}S_e(r) \,dr
\notag \\
&= a \gamma  \frac{\lambda}{2} e^{-\frac{\lambda}{2}} \int_{1}^{2}  e^{-\frac{b r^{2}}{2 \lambda}}e^{\frac{\lambda}{r}}S_e(r) \,dr.
\label{BC_Se}
\end{align}
Thus $B(\la,S_e)=0$, where $B$ was defined in  \eqref{eqUc}.


\subsection{Nullspace of $\sL$} \label{S3.1}

We show that the nullspace of $\sL$ is one-dimensional for the case $\la=\la^\dagger$, where $\la^\dagger>0$ is the sparking voltage.

\begin{lem}     \label{Null1}
Let $(a,b,\ga) \in \cS$   and $\la=\la^\dagger>0$.  
Then ${\rm dim} \, N(\sL)=1$.  
\end{lem}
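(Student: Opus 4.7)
The plan is to exploit the triangular structure of the linearized system: $\sL_2$ involves only $S_e$, $\sL_1$ determines $S_i$ from $S_e$ via integration, and $\sL_3$ is then a Poisson equation for $W$. So the dimension of $N(\sL)$ is governed by the dimension of the space of admissible $S_e$, which I will show is exactly one.

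First I would take any $(S_i,S_e,W)\in N(\sL)$ and observe that the equation $\sL_2(S_e)=0$ with boundary condition $S_e(1)=0$ (from $X$) is exactly the ODE \eqref{eqUa} with the boundary condition in \eqref{eqUb}. Since this is a second-order linear ODE with one point condition, its solution space is at most one-dimensional. Next I would integrate $\sL_1(S_i,S_e)=0$ from $1$ to $r$, using $S_i(1)=0$, to obtain the explicit linear formula
\[
S_i(r)=\frac{k_e}{2 k_i \lambda}\int_{1}^{r} r'^{2}\, h\bigl(\lambda\partial_{r'}H(r')\bigr)\, e^{-\frac{\lambda}{2}H(r')}\, S_e(r')\,dr',
\]
so $S_i$ is a linear functional of $S_e$. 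Substituting the value $S_i(2)$ from this formula into the boundary equation $\sL_4(S_i,S_e)=0$ yields, after the simplification already recorded in \eqref{BC_Se}, precisely the nonlocal condition $B(\lambda,S_e)=0$ of \eqref{eqUc}.

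Thus the $S_e$-component of any nullspace element satisfies \eqref{eqUa}, $S_e(1)=0$, and $B(\lambda,S_e)=0$, i.e.\ the electron system \eqref{eqU} apart from the positivity requirement. By hypothesis $\lambda=\lambda^{\dagger}\in\sS$, so a positive solution $\varphi_e$ of \eqref{eqU} exists; combined with the one-dimensionality of solutions of \eqref{eqUa} with $S_e(1)=0$, every admissible $S_e$ must be a scalar multiple $c\varphi_e$. Once $S_e=c\varphi_e$ is fixed, $S_i$ is uniquely determined by the integral formula above, and then $\sL_3(S_i,S_e,W)=0$ becomes the one-dimensional Dirichlet problem $\frac{1}{r^2}\partial_r(r^2\partial_r W)=S_i-e^{-\frac{\lambda}{2}H}S_e$ with $W(1)=W(2)=0$, which has a unique solution. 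Routine regularity checks (using $S_e\in H^2$, hence the integrand in the $S_i$-formula is bounded, giving $S_i\in H^1$, and standard 1D elliptic theory giving $W\in H^3$) ensure $(S_i,S_e,W)\in X$. Therefore $\dim N(\sL)=1$.

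There is no real obstacle: the substantive work has already been packaged into the definition of the sparking voltage and the reduction leading to the electron system \eqref{eqU}. The only slightly delicate point is the derivation of $B(\lambda,S_e)=0$ from $\sL_4=0$, which is a direct computation using $r^{2}\partial_{r}H\equiv 2$ and the explicit form of $h$ to produce the exponential kernel in \eqref{eqUc}.
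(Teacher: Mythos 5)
Your reduction to the electron system is exactly the paper's first step, and your upper bound $\dim N(\sL)\le 1$ together with the reconstruction of $S_i$ and $W$ from $S_e$ is fine. But there is a genuine gap at the heart of the argument: you write ``By hypothesis $\lambda=\lambda^{\dagger}\in\sS$, so a positive solution $\varphi_e$ of \eqref{eqU} exists.'' That is not what the hypothesis says. The set $\sS$ consists of parameter triples $(a,b,\gamma)$, and $(a,b,\gamma)\in\sS$ only guarantees that \emph{some} voltage $\lambda$ admits a solution of \eqref{eqU}; the sparking voltage $\lambda^{\dagger}$ is then defined in \eqref{spv1} as the \emph{infimum} of all such voltages. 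Nothing in the definition tells you that the infimum is attained, i.e.\ that \eqref{eqU} has a nontrivial solution at $\lambda=\lambda^{\dagger}$ itself. Establishing this is precisely the substantive content of the lemma, and it is where the paper spends its effort: one takes $\lambda_n\downarrow\lambda^{\dagger}$ with solutions $u_n$, normalizes $v_n=u_n/\|u_n\|_{L^2}$, proves a uniform $H^2$ bound by an energy estimate (multiplying \eqref{eqUa} by $r^2v_n$ and using $v_n(1)=0$ together with the nonlocal condition \eqref{eqUc}), extracts a weak limit $v$ with $\|v\|_{L^2}=1$ that still satisfies the ODE and $B(\lambda^{\dagger},v)=0$, and finally upgrades $v\ge 0$ to $v>0$ on $I$ via ODE uniqueness at an interior zero.

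Without this compactness step your argument is circular: you are assuming the nullspace at $\lambda^{\dagger}$ is nontrivial in order to prove it is one-dimensional. Everything else in your write-up (the derivation of $B(\lambda,S_e)=0$ from $\sL_4=0$ using the integrated form of $\sL_1=0$, the uniqueness of $S_i$ and $W$ given $S_e$, and the regularity bookkeeping) matches the paper and is correct, so the fix is to insert the limiting argument above rather than to restructure the proof.
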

					\begin{proof}
 Let $(a,b,\ga)\in\cS$ be fixed.    
 The linear operator $\cL$ depends on these parameters $a,b,\ga$ as well as $\la$.  
 We note that the second equation $\sL_{2} (S_e)=0$ 
together with $S_{e}(1)=0$ and \eqref{BC_Se} 
is identical to the boundary problem \eqref{eqUa}-\eqref{eqUb}.   
 Clearly the dimension of its nullspace is at most one.   
 Our goal is to prove that the nullspace is not trivial.                                                                                                                                                                                                                                                                                                                                                                                                                                                                                                                                                                                                                                                                                                                                                                                                                                                                                                                                                                                                                                     

Owing to the definition of $\la^\dagger$ as an infimum, there exists a sequence 
$\la_n \to \la^\dagger$ and  $u_n\in H^2$ satisfying \eqref{eqU} with $\la=\la_n$.  
We normalize $u_n$ by defining $v_n(r) = \|u_n\|^{-1}_{L^2}u_n(r)$.  
Then each $v_n$ also satisfies \eqref{eqU}.  
We claim that the $v_n$ are bounded in $H^2$.    
Indeed, this will follow by the energy method.  
Recall the inner weighted product defined in \eqref{inner-product1}.
We multiply \eqref{eqUa} by $r^2v_n$, 
{
integrate and use  $v_n(1)=0$ and \eqref{eqUc} to obtain 
\begin{align*}
\|v_n'\|^2_{L^2} & \le C (|v_n(2)|+\|v_n\|_{L^{2}}) |v_n(2)| + C\|v_n\|^2_{L^2}
\\
&\le C (\|v_n\|_{L^2}^{1/2}\|v_n'\|_{L^2}^{1/2} + \|v_n\|_{L^2}) \|u_n\|_{L^2}^{1/2}\|v_n'\|_{L^2}^{1/2} + C\|v_n\|^2_{L^2}.
\end{align*}
Then we use $\|v_n\|_{L^2}=1$ and Schwarz's inequality.}
So $v_n$ is bounded in $H^1$ and therefore by \eqref{eqUa} also in $H^2$.  

Now we take a weakly convergent subsequence of $\{v_n\}$ in $H^2$  to some limit $v$.  
The limit $v$ satisfies the ODE, as well as $v(1)=0$ and { $\|v\|_{L^2}=1$}.  
In addition, $v\ge0$ and $B(\la^\dagger,v)=0$. 
In case $v(r)$ vanishes at some point $r_0\in(1,2)$, then we would have  $v(r_0)=v'(r_0)=0$. 
It would follow that $v\equiv 0$, which contradicts { $\|v\|_{L^2}=1$}.  
Thus we infer that $v(r)>0$ for all $r \in I$.  

If we define the positive function $S_e=v$, then $\sL_2(S_e)=0$.  
We easily solve $\sL_1(S_i,S_e)=0$ for $S_i$.  
So $\sL_4(S_i,S_e)=0$.  Finally we easily solve $\sL_3(S_i,S_e,W)=0$ for $W$.  
Thus $(S_i,S_e,W)$ belongs to the nullspace of $\sL$.   So $\dim N(\sL)=1$.
\end{proof}

\subsection{Adjoint and range of $\sL$} \label{S3.2}

This section is devoted to computing the adjoint operator $\sL^*$ and to using it 
to determine the codimension of the range $R(\sL)$ of $\sL$.  
Recall that the linearized operator $\sL$ as an unbounded operator on 
$L^2\times L^2\times L^2$ with its domain $D(\sL) =  X \subset  H^1\times H^2\times H^2$ 
and its range $R(\sL)\subset L^2\times L^2\times L^2\times \real$.     
Thus its adjoint $\sL^*$ has its domain $D(\sL^*) \subset  L^2\times L^2\times L^2\times \real$ and its range $R(\sL^*) \subset L^2\times L^2\times L^2$.  

\begin{lem}
The adjoint operator $\sL^*$ is  given by 
\begin{align*}
D(\sL ^*):=&\{(\psi_i,\psi_e,\psi_v,\psi_b)\in H^1 (I)\times H^2 (I) \times H^2 (I) \times \mathbb R \, ;  \, \er{domain2} \text{ holds}  \},
\\
\sL^*_1(\psi_i,\psi_v):=&
-2\frac{k_i}{r^{2}} \lambda \partial_r \psi_i-\psi_v,
\\
\sL^*_2(\psi_i,\psi_e,\psi_v):=&
-\frac{1}{r^{2}} \partial_{r} ( r^{2} \partial_{r} \psi_e )-g(\lambda \partial_{r} H)\psi_e
-k_eh(\lambda\partial_{r} H)e^{-\frac{\lambda}{2}H} \psi_i+e^{-\frac{\lambda}{2}H}\psi_v,
\\
\sL ^*_3(\psi_v):=&
\frac{1}{r^{2}} \partial_{r} ( r^{2} \partial_{r} \psi_v ),
\end{align*}
where the boundary condition for $\sL^*$ is 
\begin{equation}\lb{domain2}
4\psi_e(2)-\psi_b=\psi_e(1)=\psi_v(1)=\psi_v(2)
=4\psi_i(2) -\gamma\frac{1}{k_e} e^{\frac{\la}{2}}\psi_b
=  4\partial_r\psi_e(2)
+\frac{\lambda}{4}\psi_b=0.
\end{equation}
Hence $\pa_r\psi_e(2) + \tfrac\la 4 \psi_e(2) = 0$.
\end{lem}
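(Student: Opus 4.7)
The plan is to establish the defining adjoint identity
\[
\langle \sL(S_i,S_e,W),(\psi_i,\psi_e,\psi_v,\psi_b)\rangle_{L^2\times L^2\times L^2\times\real}=\langle (S_i,S_e,W),\sL^*(\psi_i,\psi_e,\psi_v,\psi_b)\rangle_{L^2\times L^2\times L^2}
\]
for every $(S_i,S_e,W)\in X$, and then to read off both the bulk formulas for $\sL_1^*,\sL_2^*,\sL_3^*$ and the admissibility conditions defining $D(\sL^*)$. All $L^2$ inner products carry the weight $r^2$ inherited from \eqref{inner-product1}.

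First I would expand the left-hand side term by term and integrate by parts, using only the Dirichlet conditions built into $X$, namely $S_i(1)=S_e(1)=W(1)=W(2)=0$. For $\sL_1$ a single integration by parts on $2k_i\lambda\, r^{-2}\partial_r S_i$ against $r^2\psi_i$ produces the bulk term $-2k_i\lambda\, r^{-2}\partial_r\psi_i$ falling on $S_i$, the coupling $-k_eh(\lambda\partial_r H)e^{-\lambda H/2}\psi_i$ falling on $S_e$, and the boundary residue $2k_i\lambda\, S_i(2)\psi_i(2)$. For $\sL_2$ I integrate by parts twice against $\psi_e$: the first pass needs $\psi_e(1)=0$ to kill the lower-boundary term and leaves the residue $-4\partial_r S_e(2)\psi_e(2)$; the second uses $S_e(1)=0$ and leaves $4S_e(2)\partial_r\psi_e(2)$, together with the bulk part $-r^{-2}\partial_r(r^2\partial_r\psi_e)-g(\lambda\partial_r H)\psi_e$. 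For $\sL_3$ I again integrate by parts twice; both boundary contributions vanish provided $\psi_v(1)=\psi_v(2)=0$, leaving the bulk part $r^{-2}\partial_r(r^2\partial_r\psi_v)$ falling on $W$, plus the couplings $-\psi_v$ on $S_i$ and $e^{-\lambda H/2}\psi_v$ on $S_e$. Finally $\sL_4\cdot\psi_b$ is already boundary-supported and contributes $[\partial_r S_e(2)+(\lambda/4)S_e(2)-\gamma (k_i/k_e)e^{\lambda/2}(\lambda/2)S_i(2)]\psi_b$.

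Collecting the bulk coefficients of $S_i$, $S_e$ and $W$ respectively yields precisely the claimed formulas for $\sL_1^*$, $\sL_2^*$ and $\sL_3^*$. It then remains to force the accumulated boundary terms to vanish for every admissible $(S_i,S_e,W)\in X$. Since the values $S_i(2)$, $S_e(2)$, $\partial_r S_e(2)$, $\partial_r W(1)$ and $\partial_r W(2)$ are independently prescribable within $X$ (the Dirichlet conditions at $r=1,2$ leave these free), each corresponding coefficient must vanish separately. The $\partial_r W$ coefficients force $\psi_v(1)=\psi_v(2)=0$; the $\partial_r S_e(2)$ coefficient gives $4\psi_e(2)-\psi_b=0$; the $S_i(2)$ coefficient gives $4\psi_i(2)-\gamma k_e^{-1}e^{\lambda/2}\psi_b=0$; and the $S_e(2)$ coefficient gives $4\partial_r\psi_e(2)+(\lambda/4)\psi_b=0$. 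Combined with the condition $\psi_e(1)=0$ needed to perform the first integration by parts in $\sL_2$, these are exactly the relations in \eqref{domain2}. Eliminating $\psi_b=4\psi_e(2)$ between the last two yields the stated identity $\partial_r\psi_e(2)+(\lambda/4)\psi_e(2)=0$.

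The main technical obstacle will be bookkeeping: consistently tracking the weight $r^2\,dr$ through the two integrations by parts in $\sL_2$ and $\sL_3$, and correctly feeding the first-derivative boundary residues produced by the first step into the second. The conceptual point is that $\sL_4$, being a single scalar boundary functional at $r=2$, couples $\psi_b$ algebraically to the boundary quantities $\psi_e(2)$, $\partial_r\psi_e(2)$ and $\psi_i(2)$, so the adjoint boundary conditions mix a local Robin-type condition for $\psi_e$ at $r=2$ with a nonlocal algebraic relation tying $\psi_i(2)$ to $\psi_b$.
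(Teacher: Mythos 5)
Your computation is the same formal-adjoint calculation the paper performs, and the bulk formulas and boundary conditions you extract are all correct. But as written it only proves one inclusion: that every $\Psi$ with the stated regularity satisfying \eqref{domain2} lies in $D(\sL^*)$ with $\sL^*\Psi$ given by the claimed formulas. To identify the adjoint of an unbounded operator you also need the converse: if $\Psi\in L^2\times L^2\times L^2\times\real$ satisfies $\langle \sL\,\mathcal U,\Psi\rangle=\langle \mathcal U,\zeta\rangle$ for all $\mathcal U\in D(\sL)$, then $\Psi$ must have the regularity $H^1\times H^2\times H^2\times\real$ \emph{before} the trace values $\psi_e(1)$, $\psi_e(2)$, $\partial_r\psi_e(2)$, $\psi_i(2)$, $\psi_v(1)$, $\psi_v(2)$ even make sense, and hence before your ``each coefficient must vanish'' step can be run. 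The paper supplies this via a short bootstrap: testing against $\mathcal U=(0,0,W)$ with $W\in C_0^\infty$ gives $\partial_r(r^2\partial_r\psi_v)=r^2\zeta_v\in L^2$ in the distributional sense, whence $\psi_v\in H^2$; testing against $(S_i,0,0)$ gives $\psi_i\in H^1$; and similarly $\psi_e\in H^2$. Your proposal skips this entirely, so the extraction of \eqref{domain2} is not yet justified for a general element of the abstract adjoint domain.

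A second, smaller wrinkle: you say $\psi_e(1)=0$ is ``needed to perform the first integration by parts'' and is therefore ``combined with'' the other conditions. In the reverse direction nothing entitles you to assume it; the boundary residue at $r=1$ is $\partial_r S_e(1)\,\psi_e(1)$ (the companion term dies because $S_e(1)=0$), and since $\partial_r S_e(1)$ is itself freely prescribable in $X$ — the sixth arbitrary quantity in the paper's list, which your list of five omits — the condition $\psi_e(1)=0$ is \emph{forced} on the same footing as the others. With the regularity bootstrap added and $\partial_r S_e(1)$ restored to the list of free traces, your argument coincides with the paper's proof.
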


\begin{proof}
The first thing to check is that  
\begin{align*}
\left\langle \sL _1(S_i,S_e),\psi_i\right\rangle + \left\langle \sL _2(S_e),\psi_e\right\rangle + &
\left\langle \sL _3(S_i,S_e,W),\psi_v\right\rangle + \sL _4(S_i,S_e) \psi_b
\\
&=\left\langle S_i,\sL_1^*\Psi \right\rangle + 
\left\langle S_e,\sL_2^*\Psi \right\rangle + 
\left\langle W,\sL_3^*\Psi \right\rangle 
\end{align*}
for all $\mathcal U:=(S_i,S_e,W) \in X=D(\sL)$ 
and $\Psi := (\psi_i,\psi_e,\psi_v,\psi_b) \in D(\sL^*)$,
where the weighted inner product $\langle \cdot , \cdot \rangle$ is defined in \eqref{inner-product1}.
We observe that
\begin{align}
& \left\langle \sL _1(S_i,S_e),\psi_i\right\rangle + \left\langle \sL _2(S_e),\psi_e\right\rangle + 
\left\langle \sL _3(S_i,S_e,W),\psi_v\right\rangle + \sL _4(S_i,S_e) \psi_b
\notag\\
&=-\left\langle S_i, 2 k_i\lambda r^{-2} \partial_x\psi_i \right\rangle 
+2S_i(2) k_i\lambda \psi_i(2)
-\left\langle S_e, k_eh(\lambda \partial_{r} H)e^{-\frac{\lambda}{2}H} \psi_i \right\rangle
\notag\\
&\quad -\left\langle S_e,  r^{-2} \partial_{r} ( r^{2} \partial_{r} \psi_e) \right\rangle
-4\partial_r S_e(2)\psi_e(2)
+\partial_r S_e(1)\psi_e(1)
+S_e(2)\partial_r\psi_e(2)
-\left\langle S_e, g(\lambda \partial_{r} H)\psi_e \right\rangle 
\notag\\
&\quad + \left\langle W, r^{-2} \partial_{r} ( r^{2} \partial_{r} \psi_v) \right\rangle
+4\partial_r W(2)\psi_v(2)-\partial_r W(1)\psi_v(1)
-\left\langle S_i, \psi_v \right\rangle 
+\left\langle S_e, e^{-\frac{\lambda}{2}H}\psi_v \right\rangle
\notag\\ 
&\quad +\partial_xS_e(2)\psi_b+S_e(2)\frac{\lambda}{4}\psi_b
-S_i(2)\gamma\frac{k_i}{k_e}\frac{\lambda}{2} e^{\frac{\la}{2}}\psi_b,   
\label{ELL1}
\end{align}
due to integration by parts and  \eqref{BC0}.
Grouping them with respect to $S_i$, $S_i(2)$, $S_e$, $\partial_rS_e(2)$,
$\partial_rS_e(1)$, $S_e(2)$, $W$, $\partial_rW(2)$, and $\partial_rW(1)$, 
and also using the boundary conditions \er{domain2}, we have
\begin{align*}
&\left\langle \sL _1(S_i,S_e),\psi_i\right\rangle + \left\langle \sL _2(S_e),\psi_e\right\rangle + 
\left\langle \sL _3(S_i,S_e,W),\psi_v\right\rangle + \sL _4(S_i,S_e) \psi_b
\\
&=-\left\langle S_i, 2k_i \lambda r^{-2} \partial_x\psi_i+\psi_v \right\rangle
\\
&
\quad -\left\langle S_e, k_eh(\lambda \partial_{r} H)e^{-\frac{\lambda}{2}H} \psi_i 
+r^{-2} \partial_{r} ( r^{2} \partial_{r} \psi_e) +g(\lambda \partial_{r} H)\psi_e- e^{-\frac{\lambda}{2}H}\psi_v\right\rangle
+\left\langle W, r^{-2} \partial_{r} ( r^{2} \partial_{r} \psi_v) \right\rangle
\\
&=\left\langle S_i,\sL_1^*\Psi \right\rangle + 
\left\langle S_e,\sL_2^*\Psi \right\rangle + 
\left\langle W,\sL_3^*\Psi \right\rangle.
\end{align*}

The second thing to check is that 
if $\langle \sL \mathcal U,\Psi \rangle = \langle \mathcal U,\zeta \rangle$  for all $\mathcal U\in D(\sL)$, 
then $\Psi\in D(\sL^*)$  and  $\sL^* \Psi = \zeta$.  
Assume now that $\langle \sL \mathcal U,\Psi \rangle = \langle \mathcal U,\zeta \rangle$  for all $\mathcal U\in D(\sL)$, where $\Psi\in L^2\times L^2\times L^2 \times \mathbb R$ and $\zeta:=(\zeta_{i},\zeta_{e},\zeta_{v})\in L^2\times L^2\times L^2$ are given. 
We begin with showing $\Psi \in H^{1} \times H^{2} \times  H^{2}  \times  \mathbb R$.
Recall the weighted inner product defined in \eqref{inner-product1}.
Taking  $\mathcal U=(0,0,W) \in C_{0}^{\infty} \times C_{0}^{\infty} \times C_{0}^{\infty}$, we see that $\sL^*_3 \Psi=\zeta_{v}$ holds in the distribution sense, i.e.
$\partial_{r} ( r^{2} \partial_{r} \psi_v )= r^{2} \zeta_{v} \in L^{2}$.
This implies that $r^{2} \partial_{r} \psi_v - \int_{1}^{r}  t^{2}\zeta_{v}(t) \,dt$ is a constant. 
Therefore $\partial_{r} \psi_v$ belongs to $L^{2}$ and so do $\partial_{r}^{2} \psi_v$.
Thus $\psi_v \in H^{2}$.
In the same way, by choosing  $\mathcal U=(S_{i},0,0) \in C_{0}^{\infty} \times C_{0}^{\infty} \times C_{0}^{\infty}$, it is seen that 
$-2 k_i \lambda \partial_r \psi_{i}  = r^{2} (\zeta_{i}+\psi_{v}) \in L^{2}$.
This means $\psi_i \in H^{1}$. Similarly, we can obtain  $\psi_e \in H^{2}$.

Within 
the set $D(\cL)=X$ we notice that each of the six quantities 
\[  
-S_e'(2),\  S'_e(1),\   -W'(1),\   \tfrac14W'(2),\   \tfrac{\la k_i}{2} S_i(2),\   S_e(2)    
\] 
 is completely {\it arbitrary}.  
Furthermore, looking at the expression for $\langle \sL \mathcal U,\Psi \rangle$ given in \eqref{ELL1}, 
we notice that the coefficients of these six quantities are, precisely and respectively, 
\begin{gather*}
-4\psi_e(2) + \psi_b, \  \  \psi_e(1), \  \  \psi_v(1), \ \   \psi_v(2), \ \   
 \psi_i(2) - \frac{\ga }{4k_e} e^{\frac\la 2}\psi_b, \  \    
  \ps'_e(2)+\frac\la 4 \psi_b.
\end{gather*}
These boundary values have meaning due to $\Psi \in H^{1} \times H^{2} \times  H^{2}  \times  \mathbb R$.
That the six expressions vanish is precisely the same as the boundary conditions \eqref{domain2}.  
It follows that $\Psi= (\psi_i,\psi_e,\psi_v,\psi_b)\in D(\sL^*)$  and  $\zeta=\sL^* \Psi $. 
\end{proof}

\begin{lem}\label{codim1}
Let $(a, b, \gamma) \in \sS$ and $\la=\la^\dagger$.  
Then  ${\rm codim} \, R(\sL)=1$.  
\end{lem}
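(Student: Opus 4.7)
The plan is to prove the codimension formula via the Fredholm alternative, $\text{codim}\,R(\sL) = \dim N(\sL^*)$, by explicitly reducing $N(\sL^*)$ to the scalar boundary value problem \eqref{eqW} and then checking that \eqref{eqW} has a one-dimensional solution space at $\la=\la^\dagger$. First I would unpack the adjoint system. From $\sL_3^*\Psi=0$, namely $\partial_r(r^2\partial_r\psi_v)=0$, together with the boundary conditions $\psi_v(1)=\psi_v(2)=0$ in \eqref{domain2}, I conclude $\psi_v\equiv 0$. Feeding this into $\sL_1^*\Psi=0$ yields $\partial_r\psi_i=0$, so $\psi_i$ is a constant on $I$. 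The remaining boundary relations in \eqref{domain2} give $\psi_b=4\psi_e(2)$ and $\psi_i=\psi_i(2)=\tfrac{\gamma}{k_e}e^{\la/2}\psi_e(2)$, and the Robin-type identity $4\partial_r\psi_e(2)+\tfrac{\la}{4}\psi_b=0$ collapses to $\partial_r\psi_e(2)+\tfrac{\la}{4}\psi_e(2)=0$. Substituting $\psi_v=0$ and this constant value of $\psi_i$ into $\sL_2^*\Psi=0$ produces precisely the equation \eqref{PhiEq10}. Therefore $\dim N(\sL^*)$ equals the dimension of the solution space of \eqref{eqW}.

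Next I would establish that this solution space is exactly one-dimensional. The easier direction is the upper bound: given $\psi_e$ satisfying \eqref{eqW}, the quantity $c:=\gamma e^{\la/2}\psi_e(2)$ enters as a scalar parameter, so that \eqref{PhiEq10} reads $L_0\psi_e = c\,h(\la\partial_rH)e^{-\la H/2}$ with $L_0u:=-r^{-2}(r^2u')' - g(\la\partial_rH)u$. Letting $\psi^{(0)}$ solve $L_0\psi^{(0)}=0$ with $\psi^{(0)}(1)=0,\ \partial_r\psi^{(0)}(1)=1$ and $\psi^{(1)}$ solve $L_0\psi^{(1)}=h(\la\partial_rH)e^{-\la H/2}$ with zero Cauchy data at $r=1$, any $\psi_e$ vanishing at $r=1$ takes the form $A\psi^{(0)}+c\,\psi^{(1)}$. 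The two remaining conditions (the consistency $\psi_e(2)=c/(\gamma e^{\la/2})$ and the Robin condition at $r=2$) form a $2\times 2$ linear system for $(A,c)$, whose solution set is at most one-dimensional.

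The lower bound (existence of a nontrivial solution) is the main step, and I would handle it by exhibiting $\sL$ as a Fredholm operator of index $0$, so that $\dim N(\sL^*)=\dim N(\sL)=1$ by Lemma~\ref{Null1}. To this end I exploit the triangular structure of $\sL$: for any $(S_e,W)$ the equation $\sL_1(S_i,S_e)=0$ is a first-order linear ODE uniquely solvable for $S_i$ with $S_i(1)=0$; then $\sL_3(S_i,S_e,W)=0$ is the Dirichlet Laplacian on $I$ with weight $r^2$, uniquely solvable for $W$; finally $\sL_2(S_e)=0$ together with the functional $\sL_4=0$ (which after eliminating $S_i(2)$ becomes the non-local boundary condition $B(\la,S_e)=0$ in \eqref{BC_Se}) is a scalar Sturm--Liouville problem perturbed by a compact rank-one integral boundary term. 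Because the pure Dirichlet realization of $L_0$ is Fredholm of index $0$ and a rank-one boundary modification preserves this index, the reduced scalar operator is Fredholm of index $0$; the block-triangular reduction above then transfers this to $\sL:X\to Y$. The main obstacle is bookkeeping the function-space structure carefully---in particular the mismatch between the third slot $H^1$ of $Y$ and the domain regularity $H^3$ of $W$ in $X$, and the non-local $\gamma$-mechanism appearing in $\sL_4$---but once the reduction is carried out the Fredholm index computation is routine and yields $\dim N(\sL^*)=\dim N(\sL)=1$, hence $\text{codim}\,R(\sL)=1$.
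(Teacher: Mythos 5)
Your proposal is correct and follows essentially the same route as the paper: you compute $N(\sL^*)$ by the same reduction ($\psi_v\equiv0$, $\psi_i$ constant, collapse to the scalar problem \eqref{eqW}), and then obtain $\dim N(\sM^*)=\dim N(\sM)=\dim N(\sL)=1$ from a Fredholm-index-zero argument for the reduced scalar operator together with Lemma~\ref{Null1}, using the same block-triangular elimination of $S_i$ and $W$ that the paper uses implicitly when it equates $\dim N(\sL)=\dim N(\sM)$ and $\operatorname{codim}R(\sM)=\operatorname{codim}R(\sL)$. The only (cosmetic) difference is the mechanism for index zero of the scalar operator: the paper factors through the compact resolvent $K=(\sM+\mu I)^{-1}$ and the Riesz--Schauder alternative, whereas you treat the non-local $\gamma$-term as a rank-one, hence compact, perturbation of a local Robin boundary-value problem; both are standard and equivalent here.
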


\begin{proof} 

We begin by computing  the nullspace $N(\sL^*)$ of the adjoint operator.  
From $\sL ^{*}_3=0$ and boundary conditions $\psi_v(0)=\psi_v(L)=0$,
we see that 
\begin{equation}\lb{psi_v}
\psi_v=0.
\end{equation}
From this and $\sL^*_1=\sL^*_2=0$, we have the equations
\begin{subequations}\lb{AdjointProblem}
\begin{gather}
\partial_r \psi_i=0,
\lb{AdjointEq1}\\
-\frac{1}{r^{2}} \partial_{r} ( r^{2} \partial_{r} \psi_e )-g(\lambda \partial_{r} H)\psi_e
-k_eh(\lambda\partial_{r} H)e^{-\frac{\lambda}{2}H} \psi_i=0.
\lb{AdjointEq2}
\end{gather}
Owing to \er{domain2} and substituting $\psi_b=4\psi_e(2)$, 
the boundary conditions for this system are  
\begin{gather}
\psi_i(2)-\frac{\gamma}{k_e} e^{\frac{\la}{2}}\psi_e(2)=0,
\lb{AdjointBC1}
\\
\psi_e(1)=0,
\lb{AdjointBC2}
\\
\partial_r \psi_e(2)+\frac{\lambda}{4}\psi_e(2)=0.
\lb{AdjointBC3}
\end{gather}
\end{subequations}
It remains to solve the problem \er{AdjointProblem}.  
We reduce it to  a scalar equation for $\psi_e$ alone, as follows.  
Using \er{AdjointEq1}  and \er{AdjointBC1}, we obtain
\begin{equation}\lb{AdjointEq3}
\psi_i(x) \equiv \frac{\gamma}{k_e} e^{\frac{\la}{2}}\psi_e(2).
\end{equation}
Plugging 
this into \er{AdjointEq2}, we have the problem for $\psi_e$:
\begin{equation}
-\frac{1}{r^{2}} \partial_{r} ( r^{2} \partial_{r} \psi_e )-g(\lambda \partial_{r} H)\psi_e
-k_eh(\lambda\partial_{r} H)e^{-\frac{\lambda}{2}H} \frac{\gamma}{k_e} e^{\frac{\la}{2}}\psi_e(2)=0
\lb{AdjointEq4} 
\end{equation}
together with \er{AdjointBC2} and \er{AdjointBC3}.

Thus the nullspace of $\sL^*$ is given by 
\begin{align*} 
N(\sL^*) = \{\Psi\in D(\sL^*)\ |\ \  &\psi_b = 4\psi_e(2),\ \psi_v\equiv 0,\ \ 
\psi_i\equiv \tfrac{\gamma}{k_e} e^{\frac{\la}{2}}\psi_e(2) , \\
&\psi_e \text{ solves } \er{AdjointEq4}, \er{AdjointBC2}, \er{AdjointBC3} \ \}.
\end{align*}
In particular, $N(\sL^*)$ is completely determined by the solutions of 
\begin{subequations}              \label{PhiEq1}
\begin{gather}
-\tfrac{1}{r^{2}} \partial_{r} ( r^{2} \partial_{r} \psi_e )-g(\lambda \partial_{r} H)\psi_e
-h(\lambda\partial_{r} H)e^{-\frac{\lambda}{2}H} {\gamma} e^{\frac{\la}{2}}\psi_e(2)=0,
\label{PhiEq2}\\
\psi_e(1)=\partial_{r} \psi_e(2) + \tfrac{\lambda}{4} \psi_e(2)=0, 
\end{gather}
\end{subequations}
because $\psi_i, \psi_v, \psi_b$ are uniquely expressed in terms of $\psi_e$. 
In fact, if $\psi_e(2)=0$, then by \er{PhiEq1} the function $\psi_e$ satisfies a second-order equation with $\pa_r\psi_e(2) = \psi_e(2) = 0$, so that $\psi_e\equiv 0$.  
Thus if $\psi_e$ is not identically zero, we can normalize $\psi_e(2)=1$.

Next, because ${\rm codim} \, R(\sL)={\rm dim} \, N(\cL^{*})$, it suffices to consider 
the boundary value problem \eqref{PhiEq1} which defines $N(\cL^*)$.   
We denote by $\sM$ the operator associated to the boundary value problem \eqref{eqU}, which is the same as the boundary problem $\sL_{2} (S_e)=0$ with $S_{e}(1)=0$ and \eqref{BC_Se}.
Then the adjoint operator $\sM^{*}$ is the operator associated to \eqref{PhiEq1}.

Since the spectrum of $\sM$ is bounded below,  $\sM+\mu I$ is invertible for $\mu \gg 1$.   Moreover, its inverse $K=(\sM+\mu I)^{-1}$ is compact on $L^{2}(I)$.
Thus $I-\mu K = (\sM+\mu I)^{-1}\sM$ and $N(I-\mu K)=N(\sM)$.  
It is clear that $K^{*}=(\sM^{*}+\mu I)^{-1}$. 
Similarly, $N(I-\mu K^*)=N(\sM^*)$.  
Since $K$ is compact, we know from \cite{Ev} that 
$\dim N(I-\mu K)={\rm codim} N(I-\mu K^{*})$.  
Thus we have 
\[  
1= \dim N(\sL) = \dim N(\sM) =\dim N(\sM^*) = {\rm codim} R(\sM) =  {\rm codim} R(\sL).
\]
\end{proof}

\subsection{Transversality condition}\label{S5}
In this section, we explicitly rewrite the transversality condition

\begin{equation}\label{transversality1}
\partial_\la \partial_{(\rho_i,R_e,V)}  \sF(\lambda^\dagger,0,0,0)[1,\varphi_i^\dagger,\varphi_e^\dagger,\varphi_v^\dagger]
\notin R(\partial_{(\rho_i,R_e,V)}\sF(\lambda^\dagger,0,0,0)),
\end{equation}
where $(\varphi_i^\dagger,\varphi_e^\dagger,\varphi_v^\dagger) \in N(\cL)$ and $\lambda^\dagger$ is the sparking voltage.

\begin{pro}
Let $(a,b,\ga) \in \cS$   and $\la=\la^\dagger>0$.  
The transversality condition \eqref{transversality1} is equivalent to 
the condition $F(a, b, \gamma) \neq 0 $ with $(\lambda,\varphi_{e})=(\lambda^{\dagger},\varphi_{e}^{\dagger})$.
\end{pro}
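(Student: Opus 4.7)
The plan is to recast the abstract transversality condition \eqref{transversality1} as a scalar pairing using the Fredholm alternative, and then identify that pairing with the expression $F(a,b,\gamma)$ in \eqref{transversality0}. Since Lemma \ref{codim1} gives $\mathrm{codim}\,R(\sL)=1$, the standard duality
\[
R(\sL) = \{\, \xi \in Y \ : \ \langle \xi, \Psi\rangle_Y = 0 \text{ for all } \Psi \in N(\sL^*)\,\}
\]
holds, and the preceding analysis shows $N(\sL^*)$ is spanned by a single element $\Psi^\dagger=(\psi_i^\dagger,\psi_e^\dagger,\psi_v^\dagger,\psi_b^\dagger)$ in which $\psi_e:=\psi_e^\dagger$ solves \eqref{PhiEq1}, $\psi_v^\dagger\equiv 0$, $\psi_i^\dagger\equiv \tfrac{\gamma}{k_e}e^{\lambda/2}\psi_e(2)$ is constant, and $\psi_b^\dagger = 4\psi_e(2)$. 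Thus \eqref{transversality1} is equivalent to
\[
\bigl\langle \partial_\lambda \sL\,[\varphi_i^\dagger,\varphi_e^\dagger,\varphi_v^\dagger], \Psi^\dagger\bigr\rangle_Y \neq 0,
\]
and the task reduces to computing this pairing explicitly and matching it with $F$.

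The next step is to differentiate each component \eqref{LinearEq1}--\eqref{LinearEq4} with respect to $\lambda$. The crucial terms are: a bulk term $-k_e\bigl[h'(\lambda\partial_r H)\partial_r H - h(\lambda\partial_r H)\tfrac{H}{2}\bigr]e^{-\lambda H/2}\varphi_e^\dagger$ in $\partial_\lambda\sL_1$, the bulk term $-g'(\lambda\partial_r H)(\partial_r H)\varphi_e^\dagger$ in $\partial_\lambda\sL_2$, a boundary contribution from $\partial_\lambda\sL_4$, and an additional volume term $\tfrac{2k_i}{r^2}\partial_r\varphi_i^\dagger$ in $\partial_\lambda\sL_1$. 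Because $\psi_v^\dagger\equiv 0$, the component $\partial_\lambda\sL_3$ drops out entirely. Pairing the remaining pieces with the respective components of $\Psi^\dagger$ in the weighted inner product \eqref{inner-product1} and exploiting that $\psi_i^\dagger$ is a constant allows one to pull $\tfrac{\gamma}{k_e}e^{\lambda/2}\psi_e(2)$ out of the integrals, which immediately produces the first and second integrals appearing in \eqref{transversality0}; the factor of $k_e$ cancels cleanly.

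The main bookkeeping obstacle lies in the boundary terms, and this is where I expect to spend most of the work. The piece $\tfrac{2k_i}{r^2}\partial_r\varphi_i^\dagger$ integrates, after multiplication by $r^2$, to $2k_i\varphi_i^\dagger(2)$ (using $\varphi_i^\dagger(1)=0$), and combines with $\partial_\lambda\sL_4$ acting on $\psi_b^\dagger=4\psi_e(2)$ to yield a clean expression involving $\varphi_i^\dagger(2)$, $\varphi_e^\dagger(2)$, and the constant $\gamma\tfrac{k_i}{k_e}e^{\lambda/2}$. To eliminate $\varphi_i^\dagger(2)$ in favor of the $\varphi_e^\dagger$-boundary data, I use the two defining identities for the nullspace element: the equation $\sL_1(\varphi_i^\dagger,\varphi_e^\dagger)=0$ gives the integral representation of $\varphi_i^\dagger$ in terms of $\varphi_e^\dagger$, while $\sL_4(\varphi_i^\dagger,\varphi_e^\dagger)=0$ yields
\[
\gamma\frac{k_i}{k_e}e^{\lambda/2}\varphi_i^\dagger(2) = \frac{2}{\lambda}\!\left[\partial_r\varphi_e^\dagger(2) + \frac{\lambda}{4}\varphi_e^\dagger(2)\right].
\]
Substituting this identity causes the $k_i$ and $k_e$ constants to disappear and converts the boundary contribution into $\psi_e(2)\{\varphi_e^\dagger(2) - 2\partial_r\varphi_e^\dagger(2) - \tfrac{\lambda}{2}\varphi_e^\dagger(2)\}$, which is precisely the last curly-bracketed term in \eqref{transversality0}.

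Assembling the three surviving pieces produces exactly $F(a,b,\gamma)$ as written in \eqref{transversality0}, and the proposition follows from the equivalence set up in the first paragraph. The only subtlety beyond careful algebra is checking that the constants $k_i$ and $k_e$ drop out — and that is guaranteed by the $\sL_4$ relation above, which balances exactly the prefactors created by differentiating $e^{\lambda/2}\tfrac{\lambda}{2}$ with respect to $\lambda$ in $\sL_4$.
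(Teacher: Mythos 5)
Your proposal is correct and follows essentially the same route as the paper: represent $R(\sL)$ via orthogonality to the one-dimensional $N(\sL^*)$ with $\psi_v\equiv 0$, $\psi_i$ constant and $\psi_b=4\psi_e(2)$, compute the $\lambda$-derivative of each component of the linearization, and then combine the $2k_i\varphi_i^\dagger(2)$ term with the $\partial_\lambda\sL_4$ boundary contribution using $\sL_4(\varphi_i^\dagger,\varphi_e^\dagger)=0$ to produce $\psi_e(2)\{\varphi_e^\dagger(2)-2\partial_r\varphi_e^\dagger(2)-\tfrac{\lambda}{2}\varphi_e^\dagger(2)\}$. The bookkeeping you outline, including the cancellation of $k_i$ and $k_e$, matches the paper's computation exactly.
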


\begin{proof}
Recall from \eqref{transversality0} that
\begin{align}    
F(a,b,\gamma)&=-{\gamma} e^{\frac{\la^{\dagger}}{2}}\psi_e(2)
\int_1^2 r^{2} \left\{ h'\left( \lambda^{\dagger} \partial_{r} H \right) (\partial_{r} H) - h\left( \lambda^{\dagger} \partial_{r} H \right)\frac{H}{2} \right\}e^{-\frac{\lambda^{\dagger}}{2}H}\varphi_e^\dagger \,dr 
   \notag\\
&\quad -\int_1^2 r^{2} \psi_{e}g'(\lambda^{\dagger} \partial_{r} H) (\partial_{r} H) \varphi_e^\dagger\, dr+\psi_e(2)          
\left\{\varphi_e^\dagger(2) - 2\partial_r\varphi_e^\dagger(2)  - \frac{\lambda^\dagger}  {2}\varphi_e^\dagger(2)\right\}.
\label{transversality2}
\end{align}
By differentiating \er{LinearEq0} with respect to $\lambda$, we see that
\begin{align*}
f_i&:=\partial_\la \partial_{(\rho_i,R_e,V)} \sF_1(\lambda,0,0,0)[1,\varphi_i^\dagger,\varphi_e^\dagger,\varphi_v^\dagger]              \notag\\
&\ =2\frac{k_i}{r^{2}} \partial_{r} \varphi_i^\dagger
-k_e \left\{ h'\left( \lambda^{\dagger} \partial_{r} H \right) (\partial_{r} H) - h\left( \lambda^{\dagger} \partial_{r} H \right)\frac{H}{2} \right\}e^{-\frac{\lambda^{\dagger}}{2}H}\varphi_e^\dagger,
\\
f_e&:=\partial_\la \partial_{(\rho_i,R_e,V)} \sF_2(\lambda,0,0,0)[1,\varphi_i^\dagger,\varphi_e^\dagger,\varphi_v^\dagger]=
- g'(\lambda^{\dagger} \partial_{r} H) (\partial_{r} H) \varphi_e^\dagger,
\\
f_v&:=\partial_\la \partial_{(\rho_i,R_e,V)} \sF_3(\lambda,0,0,0)[1,\varphi_i^\dagger,\varphi_e^\dagger,\varphi_v^\dagger] =
-e^{-\frac{\lambda^{\dagger}}{2}H}\frac{H}{2}\varphi_e^\dagger,
\\
f_b&:=\partial_\la \partial_{(\rho_i,R_e,V)} \sF_4(\lambda,0,0,0)[1,\varphi_i^\dagger,\varphi_e^\dagger,\varphi_v^\dagger]=
\frac{1}{4} \varphi_e^\dagger(2) - \gamma \frac{k_{i}}{k_{e}} e^{\frac{\lambda^{\dagger}}{2}} \left(  \frac{\lambda^{\dagger}}{4} +  \frac{1}{2} \right) \varphi_i^\dagger(2).
\end{align*}
On the other hand, the range $R(\partial_{(\rho_i,R_e,V)}\sF(\lambda,0,0,0))$ can be represented as 
\begin{gather}
R(\sL)=\left\{(f_i,f_e,f_v,f_b) \in Y \, ;  \, \er{Orthogonal1} \right\}, 
\notag\\
\int_{1}^{2} r^{2} (f_i\psi_i + f_e\psi_e + f_v\psi_v) dr + f_b\psi_b=0 \quad
\text{for  $(\psi_i,\psi_e,\psi_v,\psi_b)\in N(\sL^*)$}. 
\lb{Orthogonal1}
\end{gather}
		Owing 
to these formulas together with \er{domain2}, \er{psi_v}, and \er{AdjointEq3},
the transversality condition \er{transversality1} can be written as 
$(f_i,f_e,f_v,f_b) \not\in R(\cL)$.  That is, 
\[ 
 \psi_i\int_1^2 r^2f_i\ dr  + \int_1^2 r^2 \psi_e\ f_e\ dr  +  \psi_b\ f_b  \ne0.  \]
Explicitly, this means 
\begin{multline}
\frac{\gamma}{k_e} e^{\frac{\la^{\dagger}}{2}}\psi_e(2)
\int_1^2 r^{2} \left[ 
2\frac{k_i}{r^{2}} \partial_{r} \varphi_i^\dagger
-k_e \left\{ h'\left( \lambda^{\dagger} \partial_{r} H \right) (\partial_{r} H) - h\left( \lambda^{\dagger} \partial_{r} H \right)\frac{H}{2} \right\}e^{-\frac{\lambda^{\dagger}}{2}H}\varphi_e^\dagger
\right] \,dr 
\\
-\int_1^2 r^{2} \psi_{e}g'(\lambda^{\dagger} \partial_{r} H) (\partial_{r} H) \varphi_e^\dagger\, dr
+4\psi_e(2)\left\{\frac{1}{4} \varphi_e^\dagger(2) - \gamma \frac{k_{i}}{k_{e}} e^{\frac{\lambda^{\dagger}}{2}} \left(  \frac{\lambda^{\dagger}}{4} +  \frac{1}{2} \right) \varphi_i^\dagger(2)\right\}
\neq 0.
\lb{transversality3}
\end{multline}
				This is what we have to prove.  
However, among the six terms in \er{transversality3}, the first and last terms add up to  
\begin{align*}
& 2\gamma \frac{k_{i}}{k_e} e^{\frac{\la^{\dagger}}{2}}\psi_e(2) \int_1^2 \partial_{r} \varphi_i^\dagger \,dr
-\gamma \frac{k_{i}}{k_{e}} e^{\frac{\lambda^{\dagger}}{2}} ( \lambda^{\dagger} + 2 ) \psi_e(2)\varphi_i^\dagger(2) 
\\
&=-\gamma \frac{k_{i}}{k_{e}} e^{\frac{\lambda^{\dagger}}{2}}\lambda^{\dagger}\psi_e(2)\varphi_i^\dagger(2)
=-\psi_e(2)\left\{2\partial_r\varphi_e^\dagger(2)
    +  \frac{\lambda^\dagger}{2}  \varphi_e^\dagger(2) \right\}. 
\end{align*}
The last equality is due to  \er{LinearEq4} and the fact that 
$(\varphi_i^{\dagger},\varphi_e^{\dagger},\varphi_v^{\dagger}) \in N(\partial_{(\rho_i,R_e,V)}\sF(\lambda^\dagger,0,0,0))$.  
Substituting this simple equality into \er{transversality3} 
shows that the transversality condition \er{transversality2} is precisely the same as 
$F(a,b,\gamma)\neq 0$.
The proof is complete.
\end{proof}


\section{Global Bifurcation}
In this section, we prove the global bifurcation of the stationary solutions in Theorem \ref{mainthm0} by invoking the following abstract theorem \cite{CS1}.

\begin{thm}[\!\cite{CS1}]\lb{Global0}
Let $X$ and $Y$ be Banach spaces,
$\sO$ be an open subset of ${\mathbb R}\times X$ and $\sF:\sO \to Y$ be a real-analytic function. 
Suppose that
\begin{enumerate}[{(H}1{)}]
\item $(\lambda,0)\in \sO$ and $\sF(\lambda,0)=0$ for all $\lambda \in \mathbb R$;
\item for some $\lambda^\dagger \in \mathbb R$, $N(\partial_u\sF(\lambda^\dagger,0))$ and 
$Y\backslash R(\partial_u\sF(\lambda^\dagger,0))$ are one-dimensional, 
with the null space generated by $u^\dagger$, which satisfies  the transversality condition
\[
\partial_\la\partial_u\sF(\lambda^\dagger,0)(1,u^\dagger)\notin R(\partial_u\sF(\lambda^\dagger,0)),  
\]
where $\partial_u$ and $\partial_\la\partial_u$ mean Fr\'echet derivatives for $(\la,u) \in \sO$,
and $N(\sL)$ and $R(\sL)$ denote the null space and range of a linear operator $\sL$ between
two Banach spaces;
\item $\partial_u\sF(\lambda,u)$ is a Fredholm operator of index zero for any $(\la,u) \in \sO$
that satisfies the equation $\sF(\lambda,u)=0$;
\item for some sequence $\{\sO_j\}_{j\in \mathbb N}$ of bounded closed subsets of $\sO$ with
$\sO=\cup_{j\in \mathbb N} \sO_j$, 
the set $\{(\la,u) \in \sO \, ; \, \sF(\lambda,u)=0\}\cap \sO_j$ is compact for each $j\in\mathbb N$.
\end{enumerate}

Then there exists in $\sO$ a continuous curve 
${\sK}=\{(\la(s),u(s)) \, ; \, s \in \mathbb R\}$
of $\sF(\lambda,u)=0$ such that:
\begin{enumerate}[{(C}1{)}]
\item $(\la(0),u(0))=(\la^\dagger,0)$;
\item $u(s)=su^\dagger+o(s)$ in $X$  as $s \to 0$;
\item there exists a neighborhood $\sW$ of $(\la^\dagger,0)$
and $\ve > 0$ sufficiently small such that
\begin{equation*}
\{(\la,u)\in \sW \, ; \, u\neq 0 \text{ and } \sF(\la,u)=0 \}
=\{(\la(s),u(s)) \, ; \, 0<|s|<\ve\};
\end{equation*}
\item $\sK$ has a real-analytic reparametrization locally around each of its points;
\item one of the following two alternatives occurs:
\begin{enumerate}[(I)]
\item for every $j\in\mathbb N$, there exists $s_j>0$ such that $(\la(s),u(s))\notin \sO_j$
for all $s \in \mathbb R$ with $|s|>s_j$;
\item there exists $T>0$ such that
$(\la(s),u(s))=(\la(s+T),u(s+T))$ for all $s \in \mathbb R$.
\end{enumerate}
\end{enumerate}
Moreover, such a curve of solutions of $\sF(\lambda,u)=0$
having the properties (C1)-(C5) is unique (up to reparametrization).
\end{thm}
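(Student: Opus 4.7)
The plan is to combine the Crandall--Rabinowitz local bifurcation theorem with real-analytic global continuation, in the spirit of Dancer and Buffoni--Toland. First I would use (H1), (H2), and the transversality condition to invoke Crandall--Rabinowitz, producing a $C^1$ local curve $\{(\la(s), u(s))\}_{|s|<\ve}$ through $(\la^\dagger, 0)$ with $u(s) = s u^\dagger + o(s)$ and $\la(0)=\la^\dagger$. Because $\sF$ is real-analytic and the kernel at $(\la^\dagger,0)$ is one-dimensional with one-dimensional cokernel (from (H2) and (H3)), a Lyapunov--Schmidt reduction collapses $\sF(\la,u)=0$ near $(\la^\dagger, 0)$ to a scalar real-analytic equation, and the analytic implicit function theorem then provides a real-analytic reparametrization of the local branch, yielding (C1)--(C4) in a neighborhood $\sW$ of $(\la^\dagger, 0)$.

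Next I would extend this local curve to a maximal global curve. At every point on the curve where $\pa_u \sF$ is an isomorphism, the analytic implicit function theorem gives a local analytic extension; the Fredholm hypothesis (H3) guarantees that failure of invertibility is non-generic and that the singular points along the curve are locally finite. At a singular point, where the kernel of $\pa_u \sF$ is nontrivial but finite-dimensional, I would again perform a Lyapunov--Schmidt reduction and apply the Weierstrass preparation theorem to the resulting finite-dimensional analytic system; this produces a finite collection of analytic branches, from which one selects the continuation of the incoming branch. Concatenating these pieces gives a continuous global curve $\sK$ that is analytic except at a locally finite set, parametrized consistently (for instance by arc length in $\mathbb R \times X$).

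For the terminal alternatives I would use (H4). Compactness of the solution set within each $\sO_j$ ensures the curve can be continued as long as it stays in some $\sO_j$. If it eventually exits every $\sO_j$, alternative (I) holds and we are done. If instead $\sK$ remains inside some fixed $\sO_{j_0}$ for all parameter values, compactness forces accumulation points, and the one-dimensional analytic structure of the solution set near any such point forces $\sK$ to close up into a periodic loop, giving alternative (II). Uniqueness of $\sK$ up to reparametrization is inherited from the uniqueness of analytic continuation along a one-dimensional analytic variety: any other curve of solutions through $(\la^\dagger,0)$ with the properties (C1)--(C4) must coincide with $\sK$ as an analytic set.

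The hard part will be the extension of the curve through singular points along its interior, where secondary bifurcations may occur and $\pa_u \sF$ acquires a nontrivial kernel. One must identify the correct analytic branch to follow among the finitely many produced by Weierstrass preparation, and show the choice is consistent with the orientation of the incoming arc. This is where the proof genuinely needs real-analyticity and the full strength of analytic Fredholm theory; a purely $C^k$ argument would at best yield a continuous global curve without the analytic reparametrization in (C4) and would have difficulty cleanly separating alternative (I) from (II). A subtler point is confirming that (II) really is periodicity rather than a recurrent curve accumulating on itself, which again rests on the finite-dimensional local analytic structure at each solution.
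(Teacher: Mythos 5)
The paper does not prove Theorem \ref{Global0}: it is quoted verbatim from \cite{CS1} and used as a black box, so there is no in-paper proof to compare against. Measured against the actual proof in \cite{CS1} (which rests on the Buffoni--Toland theory of analytic global bifurcation and Dancer's ideas), your plan follows the same strategy --- Crandall--Rabinowitz plus an analytic Lyapunov--Schmidt reduction for the local statements (C1)--(C4), then analytic continuation of the arc, with the dichotomy (I)/(II) extracted from the compactness hypothesis (H4).

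As a proof, however, what you have written is a road map, and the gaps you yourself flag are precisely where the real work lives. Concretely: (a) you assert that (H3) makes the singular points along the curve ``locally finite,'' but index-zero Fredholmness alone does not give this; one needs the analytic structure, namely that after Lyapunov--Schmidt at any solution point the zero set is a finite-dimensional real-analytic variety which is locally either a single point or a finite union of analytic arcs meeting only at that point (the structure theorem for one-dimensional analytic varieties that underlies the whole construction); (b) selecting the outgoing arc at a branch point and proving that the resulting concatenation is a well-defined maximal ``route'' is not settled by ``the orientation of the incoming arc'' --- distinct routes can share arcs, and the uniqueness of $\sK$ up to reparametrization requires the precise bookkeeping of routes carried out in \cite{CS1}; (c) the passage from ``$\sK$ stays in some fixed $\sO_{j_0}$'' to ``$\sK$ is a closed loop'' requires showing that $\sO_{j_0}$ meets only finitely many arcs and branch points of the solution set (a consequence of (H4) combined with the local finiteness in (a)), so that an unbounded route must revisit an arc and hence, by uniqueness of analytic continuation, be periodic; without that finiteness the recurrence-versus-periodicity issue you raise remains genuinely open. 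None of these steps is wrong in your outline, but none is carried out, so the proposal should be regarded as a correct plan consistent with the cited proof rather than as a proof.
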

In general, there can be multiple secondary bifurcations but, due to its analyticity stated in (C5) above, the curve $\cK$ is unique.  

In order to apply the preceding theorem within the context of our problem, we define the sets
\begin{subequations}\label{Def-sO}
\begin{align}
\sO:=&\{(\lambda,\rho_i,R_e,V)\in (0,\infty)\times X \, ; \, 
 |\partial_r V+\lambda \partial_{r} H|>0   \} \ 
=\ \bigcup_{j\in \bbn} \sO_j ,\text{ where } 
\\
\sO_j:=&\left\{(\lambda,\rho_i,R_e,V)\in (0,\infty)\times X \, ; \, \ 
\lambda+\|(\rho_i,R_e,V)\|_{X}\leq j ,  \  
\lambda\geq \tfrac1j,  \ 
|\partial_r V+\lambda \partial_{r} H|   \geq \tfrac1j\right\}.
\end{align}  
\end{subequations}
					Note that 
$\sO$ is an open set and each $\sO_j$ is a closed bounded subset of $\sO$.
Furthermore, the $\sF_j$ are real-analytic operators because they are 
polynomials in $(\lambda,\rho_i, R_e, V)$ and their $r$-derivatives, 
except for the factor $h(|\partial_r V+\lambda \partial_{r} H|)$.  However, $\partial_r V+\lambda \partial_{r} H \neq 0$ 
in $\sO$ and the function $s\to h(|s|)$ is analytic for $s \neq 0$.

Hypothesis $(H1)$ is trivial.
In Theorem \ref{mainthm0}, we assume the hypothesis $(H2)$.
The other hypotheses $(H3)$ and $(H4)$ are validated in Lemmas \ref{index1} and \ref{cpt1} below,
respectively.
For that purpose, consider the linearized operator around an arbitrary triple of functions 
$(\rho_i^0,R_e^0,V^0)\in X$.  
\begin{lem}\lb{index1} 
For any $(\lambda,\rho_i^0,R_e^0,V^0)\in {\sO}$, 
the Fr\'echet derivative 
$\sL^0 = \partial_{(\rho_i,R_e,V)} \sF(\lambda,\rho_i^0,R_e^0,V^0)$
is a linear Fredholm operator of index zero from $X$ to $Y$.
\end{lem}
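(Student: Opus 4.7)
The plan is to exhibit $\sL^0$ as a bounded invertible operator $\sA : X \to Y$ plus a compact perturbation $\sK : X \to Y$; since a compact perturbation of an isomorphism is Fredholm of index zero, this concludes the proof. The delicate point in choosing $\sA$ is that the third component $\sL^0_3(S_i,S_e,W) = r^{-2}\partial_r(r^2\partial_r W) - S_i + e^{-\frac{\lambda}{2}H}S_e$ contains the identity-like term $-S_i$, which is \emph{not} a compact map $H^1(I)\to H^1(I)$. The remedy is to place the \emph{entire} $\sL^0_3$ inside $\sA$ (not just its Laplacian part) and to solve it last, after $S_i$ and $S_e$ are already determined.

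Concretely I would set
\[
  \sA(S_i,S_e,W) := \bigl( a(r)\,\partial_r S_i,\ -r^{-2}\partial_r(r^2\partial_r S_e),\ \sL^0_3(S_i,S_e,W),\ \partial_r S_e(2) + \tfrac{\lambda}{4} S_e(2) \bigr),
\]
where $a(r) := k_i(\partial_r V^0 + \lambda\,\partial_r H)$. The hypothesis $(\lambda,\rho_i^0,R_e^0,V^0)\in\sO$ makes $a$ pointwise non-zero, and since $V^0\in H^3(I)\hookrightarrow C^2(\bar I)$, continuity on the compact interval $\bar I$ yields $\min_{\bar I}|a|>0$. The operator $\sA$ has a lower-triangular cascade structure, so invertibility is checked sequentially: (i) the first-order ODE $a\,\partial_r S_i=f_1$ with $S_i(1)=0$ integrates to $S_i(r)=\int_1^r f_1/a\in H^1(I)$; (ii) the Dirichlet--Robin boundary value problem for $S_e$ with data $f_2,f_4$ is coercive on $\{\phi\in H^1(I):\phi(1)=0\}$ (test against $S_e$, integrate by parts, use the Robin condition and Poincar\'e) and elliptic regularity lifts the weak solution to $H^2(I)$; (iii) the Dirichlet problem $r^{-2}\partial_r(r^2\partial_r W)=f_3+S_i-e^{-\frac{\lambda}{2}H}S_e$ with $W(1)=W(2)=0$ has right-hand side in $H^1(I)$ and so admits a unique $W\in H^3(I)$.

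Compactness of $\sK := \sL^0 - \sA$ is the main technical content. Writing out the Fr\'echet derivatives of $\sF_1,\sF_2,\sF_4$ and cancelling the $\sA$-pieces, every surviving term is the product of a bounded coefficient (built from $\rho_i^0,R_e^0,V^0,H$, and from $h$ or $h'$ evaluated where $|\partial_r V^0+\lambda\partial_r H|$ is bounded below) with a derivative of $S_i$, $S_e$, or $W$ of order strictly less than the principal part carried by $\sA$. The fourth component of $\sK$ maps into $\mathbb{R}$ and is automatically compact. The other components fall under the one-dimensional compact Sobolev embeddings $H^{k+1}(I)\hookrightarrow H^k(I)$: the most delicate term, $\rho_i^0\,\partial_r^2 W$ arising in $\partial_V\sF_1[W]$, is compact from $H^3$ to $L^2$ because $W\mapsto\partial_r^2 W$ is bounded $H^3\to H^1$ followed by the compact Rellich embedding $H^1\hookrightarrow L^2$, and multiplication by $\rho_i^0\in H^1\hookrightarrow L^\infty$ preserves compactness. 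Consequently $\sL^0=\sA+\sK$ is Fredholm with $\operatorname{ind}\sL^0=\operatorname{ind}\sA=0$. The main obstacle is neither PDE regularity nor abstract functional analysis but the combinatorial task of choosing the split so that the non-compact $-S_i$ term of $\sL^0_3$ does not leak into $\sK$.
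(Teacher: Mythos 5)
Your proof is correct, but it follows a genuinely different route from the paper. You decompose $\sL^0=\sA+\sK$ with $\sA$ an isomorphism of the triangular--cascade type (solve for $S_i$ from the transport equation, then $S_e$ from the Dirichlet--Robin problem, then $W$ from the Dirichlet problem) and $\sK$ compact, and you correctly identify and neutralize the one genuinely non-compact lower-order term, namely the $-S_i$ appearing in the third component, whose target is $H^1(I)$; keeping all of $\sL^0_3$ inside $\sA$ and solving it last is exactly the right fix, and the remaining terms (including $c_1\partial_r W$ with $c_1\in L^2$ only, and $\rho_i^0\partial_r^2W$) are handled properly via the compact embeddings $H^{k+1}(I)\hookrightarrow H^k(I)$ and the finite rank of the fourth component. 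The paper instead proves the a priori estimate $\|(S_i,S_e,W)\|_X\le C\|\sL^0(S_i,S_e,W)\|_Y+C\|(S_i,S_e,W)\|_Y$, which yields only that $\sL^0$ is semi-Fredholm (finite-dimensional kernel, closed range), and then pins down the index by computing $\dim N(\sL)=\operatorname{codim}R(\sL)=1$ at the reference point $(\lambda^\dagger,0,0,0)$ (Lemmas \ref{Null1} and \ref{codim1}) and invoking the local constancy of the index on the connected set $\sO$. Your argument buys self-containedness: it gives index zero at every point of $\sO$ directly, without the connectedness argument and without relying on the kernel/cokernel computation at the bifurcation point (which in the paper implicitly uses $(a,b,\gamma)\in\sS$ and $\lambda=\lambda^\dagger$). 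The paper's route buys less bookkeeping about which lower-order terms are compact, since the energy estimate treats them all at once, but it must import the index value from elsewhere. Both are standard and valid; yours is arguably the cleaner proof of the lemma as stated.
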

\begin{proof}
Recalling \eqref{Def-sO}  we  see that
$\partial_{r} V^{0}+\lambda \partial_{r} H > 0$
for any fixed choice of $(\lambda,\rho_i^0,R_e^0,V^0)$.
The operator $\sL^0=(\sL_1,\sL_2,\sL_3,\sL_4)$ acting linearly on the triple $(S_i,S_e,W)\in X$  has the form 
\begin{align}
\sL_1 = \sL_1(S_i,S_e,W) =& 
k_{i} (\partial_r V^0 + \lambda \partial_r H)\partial_{r} S_{i} + b_1S_i + b_2\partial_r^2 W + b_3S_e + c_{1}\partial_r W , 
\lb{sL1}\\
\sL_2 = \sL_2(S_i,S_e,W) =& 
- \partial_r^{2} S_e  + a_1\partial_r S_e + b_4 S_e + b_5 \partial_r^2W + b_6 \partial_rW  ,   
\lb{sL2}\\
\sL_3 = \sL_3(S_i,S_e,W) =&
\frac{1}{r^{2}}\partial_{r} (r^{2}\partial_r W)  + a_2 S_i + a_3 S_e ,
\lb{sL3} \\
\sL_4 = \sL_4(S_i,S_e,W) =&
\pa_r S_e(2) + (\tfrac\lambda2 \pa_r H(2) +\pa_rV^0(2))S_e(2)  +  \pa_rW(2) R_e^0(2) 
\notag \\
&-\gamma \frac{ k_i}{k_e} e^{\tfrac{\lambda}{2}H(2)} \left[ \{\pa_rV^0(2) + \lambda \pa_r H(2) \}S_i(2)  
+  \pa_rW(2)\rho_i^0(2) \right] , 
\lb{sL4}
\end{align}
where the coefficients $a_1,a_2,a_3$ belong to $C^1(\bar{I})$,
the coefficients $b_1,...,b_6$ belong to $C^0(\bar{I})$,
and the coefficient $c_{1}$ belongs to $L^{2}(I)$.

We first claim  that the linear operator $\sL^0$ satisfies the estimate 
\bqn  \label{ellest} 
\|(S_i,S_e,W)\|_X  \le  C\|\sL^0(S_i,S_e,W)\|_Y  +  C\|(S_i,S_e,W)\|_Y
\eqn
for all $(S_i,S_e,W)\in X$ and 
for some constant $C$ depending only on $(\lambda,\rho_i^0,R_e^0,V^0)$.
Indeed, applying a standard energy method  to \er{sL3} with $W(1)=W(2)=0$ leads to 
 \begin{align}
 \|\pa_r W\|_{H^{1}} 
 & \leq  C \| a_3S_i + a_4S_e-\sL_3\|_{L^{2}}
 \leq  C\|\sL^0(S_i,S_e,W)\|_Y  +  C\|(S_i,S_e,W)\|_Y.
 \lb{estsL3}
\end{align}
Keeping in mind that  $\partial_{r} V^{0}+\lambda \partial_{r} H \ge 1/j$, we see from \er{sL1} and \er{estsL3} that  
$S_i$ can be estimated by 
 \begin{align*}
 \|\pa_r S_i\|_{L^{2}} 
 &=\|  \{k_{i} \partial_r (V^0 + \lambda H)\}^{-1}
 \left(b_{1} S_i
 +b_2 \pa_r^2 W + b_3S_e + c_{1}\pa_r W-\sL_1\right)\|_{L^{2}}
 \notag\\
 &\leq C(\|S_i\|_{L^{2}}+\|S_e\|_{L^{2}}+\|\partial_{r} W\|_{H^1}+\|\sL_1\|_{L^{2}})
 \notag\\
 &\leq  C\|\sL^0(S_i,S_e,W)\|_Y  +  C\|(S_i,S_e,W)\|_Y.
 \end{align*}
  			By writing \er{sL2} as 
  \begin{gather}\label{sL2*}
\partial_r^2 S_e  - \partial_r (a_1S_e)=  -\sL_2^0  -(\partial_r a_1)S_e + b_4S_e + b_5 \partial_r^2W 
+a_6\partial_rW,    
  \end{gather}
			we see that the left side is bounded in $L^2$ by  
\begin{gather}\lb{estsL2}
 \|\partial_r (\partial_r S_e - a_{1} S_{e})\|_{L^{2}}  \le  C\|\sL^0(S_i,S_e,W)\|_Y  +  C\|(S_i,S_e,W)\|_Y .  
\end{gather}
On the other hand, \eqref{sL4} with $|S_{e}(2)|^{2} \le C\|S_{e}\|_{L^2}  \|\partial_{r} S_{e}\|_{L^2}$ gives
\begin{gather*}
|\partial_{r} S_{e} (2)| \leq C(\|S_{e}\|_{L^2}^{1/2}  \|\partial_{r} S_{e}\|_{L^2}^{1/2} +  \|\sL^0(S_i,S_e,W)\|_Y  +  \|(S_i,S_e,W)\|_Y).
\end{gather*}
Integrating \eqref{sL2*} from $r$ to $2$, we find that the $L^{2}$-norm of $\pa_rS_e$ is bounded by the right hand side of \eqref{ellest}. Then  the $L^{2}$-norm of $\pa_r^{2}S_e$ is also bounded due to \eqref{estsL2}.
Furthermore, there holds that
  \begin{align*}
 \|\pa_r^3 W\|_{L^{2}} 
 =\|\partial_{r} (a_{2} \pa_r W +  a_3S_i + a_4S_e-\sL_3)\|_{L^{2}}
 \leq  C\|\sL^0(S_i,S_e,W)\|_Y  +  C\|(S_i,S_e,W)\|_Z.
 \end{align*}
The preceding estimates on $S_i$, $W$, and $S_e$ prove \er{ellest}.

The estimate \eqref{ellest} implies that the nullspace of $\sL^0$ has finite dimension and the range of $\sL^0$ is closed.  This is a general fact about linear operators; see for instance  \cite[Example 4 in $\S$3.12]{Zei1}. 
Therefore $\sL^0$ is a semi-Fredholm operator. 
Lemmas~\ref{Null1} and \ref{codim1} ensure that at the bifurcation point the 
 nullspace of $\partial_{(\rho_i,R_e,V)} \sF(\lambda^{*},0,0,0)$ has dimension one and 
the codimension of its range  is also one, 
so that its index is zero.    Since $\sO$ is connected 
and the index is a topological invariant \cite[Theorem 4.51, p166]{AA1}, 
$\sL^0$ also has index zero.  
This implies that the codimension of $R(\sL^0)$ is also finite.
This completes the proof of Lemma \ref{index1}. 
\end{proof}

\begin{lem}\lb{cpt1}
For each $j\in \mathbb N$, the set 
$K_j  =  \{(\lambda,\rho_i,R_e,V)\in {\sO_j} \, ; \, \ \sF(\lambda,\rho_i,R_e,V)=0\}$
is  compact in ${\mathbb R}\times X$.
\end{lem}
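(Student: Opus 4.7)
The plan is to take an arbitrary sequence $\{(\lambda_n,\rho_{i,n},R_{e,n},V_n)\}\subset K_j$ and extract a subsequence converging in $\mathbb{R}\times X$ to a limit that still lies in $K_j$. Since the sequence is bounded in $\mathbb{R}\times X=\mathbb{R}\times H^1(I)\times H^2(I)\times H^3(I)$, a diagonal extraction yields $\lambda_n\to\lambda^*$ in $\mathbb{R}$ and $(\rho_{i,n},R_{e,n},V_n)\rightharpoonup(\rho_i^*,R_e^*,V^*)$ weakly in $X$. Because $I=(1,2)$ is a bounded one-dimensional interval, the Rellich--Kondrachov theorem promotes this to strong convergences $\rho_{i,n}\to\rho_i^*$ in $C(\bar I)$, $R_{e,n}\to R_e^*$ in $C^1(\bar I)$, and $V_n\to V^*$ in $C^2(\bar I)$ via the compact embeddings $H^{k+1}(I)\subset\!\subset C^k(\bar I)$ for $k=0,1,2$.

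Next I verify that $(\lambda^*,\rho_i^*,R_e^*,V^*)\in K_j$. The closed constraints defining $\sO_j$ pass to the limit: $\lambda^*\ge 1/j$ from scalar convergence, $\lambda^*+\|(\rho_i^*,R_e^*,V^*)\|_X\le j$ from weak lower semicontinuity of the norm, $|\partial_rV^*+\lambda^*\partial_rH|\ge 1/j$ from the uniform convergence of $\partial_rV_n$, and the trace boundary conditions encoded in $X$ pass by the strong $C$-convergences. To check $\sF(\lambda^*,\rho_i^*,R_e^*,V^*)=0$, I pass to the limit in each $\sF_\ell(\lambda_n,\rho_{i,n},R_{e,n},V_n)=0$: every factor apart from the highest-order derivatives ($\partial_r\rho_i$, $\partial_r^2R_e$, $\partial_r^2V$) converges strongly in $C(\bar I)$, while those highest-order terms converge weakly in $L^2$; the nonlinear composition $h(|\partial_rV_n+\lambda_n\partial_rH|)$ converges uniformly because $h$ is smooth on $[1/j,\infty)$, in which its argument remains.

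The crux is upgrading the weak $X$-convergence to strong $X$-convergence, which exploits the mixed hyperbolic--elliptic structure of the system. Integrating $\sF_1=0$ from $r=1$ with $\rho_i(1)=0$ gives the explicit representation
\begin{gather*}
\rho_i(r)=\frac{k_e/k_i}{r^2(\partial_rV+\lambda\partial_rH)(r)}\int_1^r s^2\,h\bigl(|\partial_rV+\lambda\partial_rH|\bigr)(s)\,e^{-\lambda H(s)/2}R_e(s)\,ds,
\end{gather*}
legitimate because the denominator is bounded below by $1/j$ on $\bar I$. Since $V_n\to V^*$ in $C^2(\bar I)$ and $R_{e,n}\to R_e^*$ in $C^1(\bar I)$, differentiating this formula in $r$ shows that both $\rho_{i,n}$ and $\partial_r\rho_{i,n}$ converge uniformly on $\bar I$, so $\rho_{i,n}\to\rho_i^*$ strongly in $H^1(I)$. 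Next, solving $\sF_2=0$ for $\partial_r^2R_e=-(2/r)\partial_rR_e-\partial_rV\partial_rR_e+\{\cdots\}R_e$, where the bracketed coefficient depends on $V,\partial_rV,\partial_r^2V,\lambda$, all the factors on the right converge uniformly on $\bar I$, so $\partial_r^2R_{e,n}$ does too, and $R_{e,n}\to R_e^*$ in $H^2(I)$ strongly. Finally, solving $\sF_3=0$ for $\partial_r^2V=-(2/r)\partial_rV+\rho_i-e^{-\lambda H/2}R_e$, the right-hand side now converges strongly in $H^1$ (using $\rho_{i,n}\to\rho_i^*$ in $H^1$, $R_{e,n}\to R_e^*$ in $H^2\subset H^1$, and $\partial_rV_n\to\partial_rV^*$ in $H^1$ from strong $H^2$-convergence of $V_n$), hence $V_n\to V^*$ in $H^3(I)$.

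The main obstacle I anticipate is the handling of $\rho_i$: its equation $\sF_1=0$ is first-order hyperbolic rather than elliptic, so the standard ``weak + elliptic regularity $\Rightarrow$ strong'' argument does not apply directly. The non-degeneracy lower bound $|\partial_rV+\lambda\partial_rH|\ge 1/j$ built into the definition of $\sO_j$ is the precise mechanism that lets one solve $\sF_1=0$ explicitly for $\rho_i$ as a smooth functional of $(\lambda,V,R_e)$, thereby bypassing the hyperbolic difficulty and enabling the subsequent elliptic bootstrap for $R_e$ and $V$.
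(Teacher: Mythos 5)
Your proposal is correct and follows essentially the same route as the paper: extract a subsequence converging strongly in lower-order norms by compact embedding, use the lower bound $|\partial_r V+\lambda\partial_r H|\ge 1/j$ from the definition of $\sO_j$ to solve $\sF_1=0$ explicitly for $\rho_i$ as an integral functional of $(\lambda,R_e,V)$, and then bootstrap the elliptic equations $\sF_2=0$ and $\sF_3=0$ to recover full convergence in $X$. The only differences are cosmetic (you order the bootstrap as $\rho_i$, $R_e$, $V$ while the paper does $\rho_i$, $V$, $R_e$, and you work with $C^k$ rather than $H^k$ intermediate convergences), and both variants are valid.
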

\begin{proof}
Let  $\{(\lambda_n,\rho_{in},R_{en},V_n)\}$ be any sequence in  $K_j$. 
It suffices to show that  it has a convergent subsequence
whose limit also belongs to $K_j$.  
By the assumed bound 
$|\lambda_n|+\|(\rho_{in},R_{en},V_n)\|_{X}\leq j$, 
there exists a subsequence, still denoted by $\{(\lambda_n,\rho_{in},R_{en},V_n)\}$,
and $(\lambda,\rho_i,R_e,V)$ such that
\begin{equation}\lb{converge1}
\lambda_{n}  \to  \lambda \  \text{in $\mathbb R$}, \quad 
\rho_{in}  \to  \rho_i \ \text{in $L^{2}(I)$}, \quad 
R_{en}  \to  R_{e} \ \text{in $H^{1}(I)$}, \quad
V_{n}  \to  V  \ \text{in $H^{2}(I)$}.
\end{equation}
Furthermore, 
\[
\partial_{r} V+\lambda \partial_{r} H\geq \tfrac1j.
\]
Since $\sO_j$ is closed in $X$,
it remains to show that 
\begin{gather*}
\sF_j(\lambda,\rho_i,R_e,V)=0 \qu \text{for $j=1,2,3,4,$}
\\
\rho_{in}\to \rho_i \ \text{in} \ H^1(I), \qu
R_{en}\to R_e \ \text{in} \ H^2(I), \qu
V_n\to V \ \text{in} \ H^3(I).
\end{gather*}

Now the first equation $\sF_1(\lambda_n,\rho_{in},R_{en},V_n)=0$ with $\rho_{in}(1)=0$ is equivalent to
\[
 \rho_{in}(r)=\frac{k_e}{k_i}\frac{1}{r^{2}(\partial_r V_n(r)+\lambda_n \partial_r H(r))}
\int_{1}^r t^{2} h(|\partial_r V_n(t)+\lambda_n \partial_r H(t)|)e^{-\frac{\lambda_n}{2}H(t)}R_{en}(t)\,dt.
\]
Taking the limit and using \er{converge1}, we see that 
\[
 \rho_{i}(r)=\frac{k_e}{k_i}\frac{1}{r^{2} (\partial_r V(r)+\lambda \partial_r H(r))}
\int_{1}^r t^{2} h(|\partial_r V(t)+\lambda\partial_r H(t)|)e^{-\frac{\lambda}{2}H(t)}R_{e}(t)\,dt,
\]
where the right hand side converges in $H^1(I)$.
Hence, we see that $\sF_1(\lambda,\rho_i,R_e,V)=0$ and $\rho_{in}\to \rho_i$ in $H^1(I)$.
Taking the limit using \eqref{converge1} in the third equation $\sF_3(\lambda_n,\rho_{in},R_{en},V_n)=0$ immediately leads to 
\[
 r^{-2} \partial_r (r^{2} \partial_r V)=\rho_i-e^{-\frac{\lambda}{2}H}R_{e}.
\]
Hence $\sF_3(\lambda,\rho_i,R_e,V)=0$ and $V_n\to V$ in $H^{3}(I)$.
The second equation $\sF_2(\lambda_n,\rho_{in},R_{en},V_n)=0$ 
can be written as 
\[   
\partial_r \{r^{2} (\partial_r R_{en} - R_{en} \partial_r V_n)\} 
=r^{2}\left\{\frac{\lambda_{n}}{2} \partial_{r} V_{n} \partial_{r} H
+\frac{\lambda_{n}^2}{4}|\partial_{r} H|^{2}
-h\left(|\partial_{r} V_{n}+\lambda \partial_{r} H|\right)\right\}R_{en}.
\]
Because the right side converges in $H^{1}(I)$, we see that 
$\pa_r R_{en} - R_{en} \pa_r V_n$ converges in $H^1(I)$. 
But $R_{en} \pa_r V_n$ converges in $H^1(I)$.  
Hence $\pa_rR_{en}$ converges in $H^1(I)$, 
which means that $R_{en}$ converges to some $R_e$ in $H^{2}(I)$.  
It is obvious from \er{converge1} and $\sF_4(\lambda_n,\rho_{in},R_{en},V_n)=0$
that $\sF_4(\lambda,\rho_i,R_e,V)=0$ is valid for the limit.
\end{proof}

		As we have checked all the conditions in Theorem \ref{Global0}, 
		the following global bifurcation conclusion is valid.
\begin{thm}       \lb{Global1}   
Let $(a,b,\ga)\in\cS$.  Define $\la^\dagger$ by \eqref{spv1}.   
Assume the transversality condition \er{TC1}. 
Then there exists in the open set $\sO$ a continuous curve 
${\sK}=\{(\la(s),\rho_i(s),R_e(s),V(s)) \, ; \, s \in \mathbb R\}  \subset  \mathbb R\times X$
of stationary solutions to problem \er{sp0} such that
\begin{enumerate}[\rm {(C}1{)}]
\item $(\la(0),\rho_i(0),R_e(0),V(0))=(\lambda^\dagger,0,0,0)$;
\item $(\rho_i(s),R_e(s),V(s))=s(\varphi_i^\dagger,\varphi_e^\dagger,\varphi_v^\dagger)+o(s)$ in the space $X$
 as $s \to 0$, where $(\varphi_i^\dagger,\varphi_e^\dagger,\varphi_v^\dagger)$ is
a basis of $N(\partial_{(\rho_i,R_e,V)}\sF(\lambda^\dagger,0,0,0))$, and $\varphi_i^\dagger(r),\varphi_e^\dagger(r)>0$ holds for $r \in (1,2]$;
\item there exists a neighborhood $\sW$ of $(\lambda^\dagger,0,0,0)$
and $\ve < 1$ such that
\begin{multline*}
\{(\la,\rho_i,R_e,V)\in \sW \, ; \, (\rho_i,R_e,V)\neq (0,0,0), \ \sF(\la,\rho_i,R_e,V)=0 \}
\\
=\{(\la(s),\rho_i(s),R_e(s),V(s)) \, ; \, 0<|s|<\ve\};
\end{multline*}
\item $\sK$ has a real-analytic reparametrization locally around each of its points;
\item at least one of the following four alternatives occurs:
\begin{enumerate}[\rm (a)]
\item $\varliminf_{s \to \infty}\la(s)= 0$;
\item $\varliminf_{s \to \infty}\inf_{r \in I}\{\partial_r V(s,r)+\la(s) \partial_r H(r)\}
 \equiv   \varliminf_{s \to \infty}\inf_{r \in I}\{\partial_r \Phi(s,r)\} 
 =0$;
\item $\varlimsup_{s \to \infty}(\|\rho_i\|_{H^{1}}+\|R_e\|_{H^{2}}+\|V\|_{H^{3}}+\lambda)(s)=\infty$;
\item there exists $T>0$ such that
$$(\la(s),\rho_i(s),R_e(s),V(s))=(\la(s+T),\rho_i(s+T),R_e(s+T),V(s+T))$$ for all $s \in \mathbb R$.
\end{enumerate}
\end{enumerate}
Moreover, such a curve of solutions to problem \er{sp0} having the properties {\rm (C1)--(C5)}
is unique (up to reparametrization).
\end{thm}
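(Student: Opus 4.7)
The plan is to apply the abstract global bifurcation theorem, Theorem \ref{Global0}, to the map $\sF = (\sF_1,\sF_2,\sF_3,\sF_4) : \sO \to Y$, where $\sO$ and its exhausting family $\{\sO_j\}$ are as in \eqref{Def-sO}. The sets are designed so that $\sO$ is open, each $\sO_j$ is a closed bounded subset, $\sO = \bigcup_j \sO_j$, and the factor $h(|\partial_r V + \lambda \partial_r H|)$ — the only place where analyticity could fail — is analytic on $\sO$ because $\partial_r V + \lambda \partial_r H$ is bounded away from zero on each $\sO_j$. Hypothesis (H1) is immediate since $(\rho_i,R_e,V)=(0,0,0)$ solves $\sF(\lambda,\cdot)=0$ for every $\lambda>0$. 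Hypothesis (H2) at $\lambda = \lambda^\dagger$ is the combined content of Lemma \ref{Null1} (one-dimensional null space), Lemma \ref{codim1} (one-dimensional cokernel), and the transversality condition \eqref{TC1}, which by the proposition of Section \ref{S5} is precisely the abstract condition \eqref{transversality1}. Hypothesis (H3) is Lemma \ref{index1}, and hypothesis (H4) is Lemma \ref{cpt1}. Conclusions (C1)--(C4) of Theorem \ref{Global1} then follow verbatim from (C1)--(C4) of the abstract theorem.

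The main remaining work is to translate the abstract dichotomy (C5) of Theorem \ref{Global0} into the four alternatives (a)--(d). Alternative (II) of Theorem \ref{Global0} is exactly conclusion (d) (the closed loop). Alternative (I) asserts that for every $j\in\bbn$ there exists $s_j$ with $(\lambda(s),\rho_i(s),R_e(s),V(s)) \notin \sO_j$ whenever $|s| > s_j$. Inspecting \eqref{Def-sO}, leaving $\sO_j$ forces at least one of the three failures:
\begin{gather*}
\lambda + \|(\rho_i,R_e,V)\|_X > j, \qquad \lambda < \tfrac{1}{j}, \qquad \inf_{r\in I}(\partial_r V + \lambda \partial_r H) < \tfrac{1}{j}.
\end{gather*}
Choosing a sequence $s_j \to \infty$ and applying the pigeonhole principle, some one of the three conditions fails for infinitely many $j$ along the curve; the corresponding $\varliminf$ or $\varlimsup$ then realises alternative (c), (a), or (b), respectively. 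Since the trivial solution itself lies on the boundary of $\sO$ (where $|\partial_rV + \lambda \partial_r H|$ may vanish), no spurious boundary exit is possible beyond these three geometric causes.

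The final ingredient in (C2) is the positivity of $\varphi_i^\dagger$ and $\varphi_e^\dagger$ on $(1,2]$. By construction in Lemma \ref{Null1}, $\varphi_e^\dagger$ is the $H^2$ limit of normalized eigenfunctions $v_n \geq 0$, and a standard ODE uniqueness argument at any would-be interior zero $r_0\in(1,2)$ forces $\varphi_e^\dagger\equiv 0$, contradicting the normalization; hence $\varphi_e^\dagger>0$ on $(1,2]$. Integrating $\sL_1(\varphi_i^\dagger,\varphi_e^\dagger)=0$ from $1$ with $\varphi_i^\dagger(1)=0$ expresses $\varphi_i^\dagger(r)$ as a strictly positive constant times $\int_1^r t^2 h(\lambda^\dagger\partial_r H)e^{-\frac{\lambda^\dagger}{2}H}\varphi_e^\dagger\,dt$, which is positive on $(1,2]$. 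The most delicate step in the whole programme is verifying the Fredholm index in Lemma \ref{index1}, and inside it the global part relies on the invariance of the index under continuous deformation together with connectedness of $\sO$; this is why the lower bound $|\partial_r V+\lambda\partial_r H|\geq 1/j$ is indispensable — without it the principal symbol of $\sL_1$ degenerates and the required elliptic-type estimate \eqref{ellest} breaks down.
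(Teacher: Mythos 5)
Your proposal is correct and follows essentially the same route as the paper: hypotheses (H1)--(H4) of the abstract theorem are supplied by the triviality of the zero solution, Lemmas \ref{Null1} and \ref{codim1} together with the transversality proposition, Lemma \ref{index1}, and Lemma \ref{cpt1}, after which alternative (I) is unpacked via the three defining constraints of $\sO_j$ into (a), (b), (c), and the positivity in (C2) comes from the construction in Lemma \ref{Null1} plus integration of $\sL_1=0$. The paper states this even more tersely, so no gap to report.
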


Conditions (C1)--(C3)  express the local bifurcation, while (C4)--(C5) are assertions about the 
global curve $\sK$.  Alternative (c)  asserts that $\sK$ may be unbounded.  
Alternative (d) asserts that $\sK$ may form a closed curve (a `loop').  

\section{Positive Densities}\label{SPD}

Note that Theorem \ref{Global1} does not assert any positivity of $R_e$ or $\rho_i$ along $\cK$. 
However, we should of course keep in mind that for the physical problem  $\rho_i$ and $R_e$ are densities 
of particles and so they should be non-negative. 
In this section we investigate the part of the mathematical curve $\sK$ that corresponds to such densities. 
We will often abbreviate  $\partial_r V(s,r)+\lambda(s) \partial_r  H(r)$ as $\partial_r \Phi (s,r)$, and
suppress the variable $r$, as in $\rho_i(s)=\rho_i(s,\cdot)$, $R_e(s)=R_e(s,\cdot)$, $V(s)=V(s,\cdot)$, and $\partial_r \Phi (s,\cdot)=\partial_r \Phi (s)$.

The content of this section is the following theorem, which states in particular that 

\noindent   {\it either (i)} 
$\rho_i$ and $R_e$ remain positive 

\noindent   {\it or (ii)} the curve of positive solutions 
forms a half-loop going from $\lambda^\dagger$ to some other voltage $\lambda^\ddagger$.

Here $\lambda^\dagger$ is defined in \er{spv1} 
and $\lambda^\ddagger$ is a voltage with $\lambda^\dagger<\lambda^\ddagger$.
We emphasize  that the curve $\sK$ is never the half-loop {\it (ii)}
unless a voltage $\lambda^\ddagger > \lambda^\dagger$ exists. 

\begin{thm}\lb{Global2}   
Let $(a,b,\ga)\in\cS$.  Define $\la^\dagger$ by \eqref{spv1}. Assume the transversality condition \er{TC1}. 
For the global bifurcation curve $\sK = (\la(s),\rho_i(s),R_e(s),V(s))$ in Theorem \ref{Global1},
one of the following two alternatives occurs:
\begin{enumerate}[\rm (i)] 
\item  
both $\rho_i(s,r)$ and $\rho_e(s,r)=(R_ee^{-\la H/2})(s,r)$ 
are positive  for all $s\in(0,\infty)$ and $r \in (1,2]$.   Moreover, 
\begin{gather}\label{glow1}
\varlimsup_{s\to\infty}\{\|\rho_i(s)\|_{C^0}+\|\rho_e(s)\|_{C^0}+\lambda(s)\}=\infty;
\end{gather}
\item 
there exists a voltage $\lambda^\ddagger$ satisfying  $\lambda^\dagger<\lambda^\ddagger$  
and a finite parameter value $s^\ddagger>0$ such that 
\begin{enumerate}[\rm (1)]
\item $\rho_i(s,r)>0$ and $R_e(s,r)>0$ \ for all $s\in(0,s^\ddagger)$ and $r \in (1,2]$;
\item $(\la(s^\ddagger),\rho_i(s^\ddagger),R_e(s^\ddagger),V(s^\ddagger))=(\lambda^\ddagger,0,0,0)$.
\end{enumerate}
\end{enumerate}
\end{thm}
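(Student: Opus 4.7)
The plan is to let
\[
s^\ddagger := \sup\{ s > 0 : \rho_i(s', r) > 0 \text{ and } R_e(s', r) > 0 \text{ for all } s' \in (0, s), \ r \in (1, 2]\}.
\]
By property (C2) of Theorem \ref{Global1} together with the positivity of $\varphi_i^\dagger, \varphi_e^\dagger$ on $(1,2]$, we have $s^\ddagger > 0$. The two cases $s^\ddagger = \infty$ and $s^\ddagger < \infty$ will produce alternatives (i) and (ii), respectively.

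Assume $s^\ddagger < \infty$. By continuity, $\rho_i(s^\ddagger, \cdot), R_e(s^\ddagger, \cdot) \ge 0$ on $[1,2]$ and at least one vanishes at some $r_0 \in (1,2]$. Integrating the ODE $\sF_1 = 0$ with the boundary value $\rho_i(1) = 0$ gives
\begin{equation}\label{rhoi-formula}
\rho_i(s^\ddagger, r) = \frac{k_e}{k_i \, r^2 \, \partial_r \Phi(s^\ddagger, r)}\int_1^r t^2 h(\partial_r \Phi(s^\ddagger, t)) e^{-\lambda^\ddagger H(t)/2} R_e(s^\ddagger, t)\, dt,
\end{equation}
where $\lambda^\ddagger := \lambda(s^\ddagger)$. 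Since $\partial_r \Phi > 0$ in $\sO$ and $h > 0$, formula \eqref{rhoi-formula} forces $\rho_i(s^\ddagger, \cdot) > 0$ on $(1,2]$ whenever $R_e(s^\ddagger, \cdot) \not\equiv 0$, so the critical event must be the vanishing of $R_e$. The equation $\sF_2 = 0$ is a scalar linear elliptic equation for the nonnegative $R_e(s^\ddagger, \cdot)$, so the Harnack inequality / strong maximum principle (valid for nonnegative solutions irrespective of the sign of the zeroth-order coefficient) forces $R_e(s^\ddagger) \equiv 0$ if $R_e$ vanishes at any interior point of $I$. If $R_e$ vanishes only at $r = 2$, the Hopf boundary point lemma gives $\partial_r R_e(s^\ddagger, 2) < 0$, whereas the boundary condition $\sF_4 = 0$ together with $R_e(s^\ddagger, 2) = 0$ yields
\[
\partial_r R_e(s^\ddagger, 2) = \gamma \, (k_i/k_e) \, e^{\lambda^\ddagger/2} \, \partial_r \Phi(s^\ddagger, 2) \, \rho_i(s^\ddagger, 2) \ge 0,
\]
a contradiction. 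Hence $R_e(s^\ddagger) \equiv 0$; then $\rho_i(s^\ddagger) \equiv 0$ by \eqref{rhoi-formula} and $V(s^\ddagger) \equiv 0$ by $\sF_3 = 0$ with homogeneous Dirichlet data, giving conclusion (ii)(2). For the strict inequality $\lambda^\ddagger > \lambda^\dagger$, normalize $w(s) := (\rho_i(s), R_e(s), V(s))/\|(\rho_i(s), R_e(s), V(s))\|_X$; a weak subsequential limit $w^\ddagger$ (strong in the compactly embedded image space) lies in the kernel of $\partial_{(\rho_i, R_e, V)}\sF(\lambda^\ddagger, 0, 0, 0)$, so $\lambda^\ddagger \in \cS$ and $\lambda^\ddagger \ge \lambda^\dagger$. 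Equality is ruled out by combining the local uniqueness (C3) and the expansion (C2): a return of $\sK$ to $(\lambda^\dagger, 0, 0, 0)$ would force the local image near $s^\ddagger$ to coincide with the local branch from $s = 0$, and the resulting tangent direction, together with the analytic reparametrization of (C4), is incompatible with positivity on $(0, s^\ddagger)$.

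Assume instead $s^\ddagger = \infty$. Alternative (d) is ruled out because a period $T$ with $(\lambda(T), u(T)) = (\lambda^\dagger, 0, 0, 0)$ would, via (C2) and periodicity, force $R_e(T - \epsilon, \cdot) \approx -\epsilon \, \varphi_e^\dagger < 0$ for small $\epsilon > 0$, contradicting positivity on $(0, \infty)$. Alternatives (a) and (b) are excluded by showing that along the positive-density branch, both $\lambda(s)$ and $\inf_r \partial_r \Phi(s)$ stay uniformly bounded below on any subinterval where the $C^0$-norms of the densities remain bounded; this follows from integrating $\sF_3 = 0$ to control $\partial_r V$ in terms of integrals of the (signed) densities, combined with the explicit lower bound $\partial_r H = 2/r^2 > 0$ and with $\lambda$ sliding continuously from $\lambda^\dagger > 0$. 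Only alternative (c) can then occur, and a standard elliptic bootstrap converts the $H^1 \times H^2 \times H^3 \times \mathbb R$-blow-up into the $C^0$-blow-up \eqref{glow1}: a uniform bound on $\|\rho_i\|_{C^0} + \|\rho_e\|_{C^0} + \lambda$ yields, via Poisson's equation $\sF_3 = 0$, a uniform $H^3$-bound on $V$; this feeds into the linear elliptic equation $\sF_2 = 0$ to produce a uniform $H^2$-bound on $R_e$; and finally \eqref{rhoi-formula} yields a uniform $H^1$-bound on $\rho_i$, contradicting alternative (c).

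The main obstacle I anticipate is the exclusion of alternatives (a) and (b) along the positive-density branch, since both correspond to $\sK$ approaching the boundary $\partial \sO$ in a degenerate manner (either $\lambda \to 0$ or $\partial_r \Phi \to 0$ at some point). Establishing uniform lower bounds must use positivity of the densities together with the full system structure, because $\sF_3 = 0$ alone does not control the sign of $\partial_r V$ strongly enough to guarantee $\partial_r \Phi \ge \delta > 0$. A secondary and more delicate point is the strict inequality $\lambda^\ddagger > \lambda^\dagger$ in case (ii), which relies in an essential way on the real-analytic structure of $\sK$ from (C4) and on comparing the tangent direction at $s = s^\ddagger$ with the sign of the positive kernel element $\varphi_e^\dagger$ produced at $s = 0$.
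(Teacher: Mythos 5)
Your overall architecture coincides with the paper's: the stopping parameter $s^\ddagger$, the dichotomy $s^\ddagger=\infty$ versus $s^\ddagger<\infty$, the maximum-principle treatment of a zero of $R_e$ at an interior point or at $r=2$, and the analyticity/local-uniqueness argument for $\lambda^\ddagger>\lambda^\dagger$ (the paper phrases the last one as: a return to $(\lambda^\dagger,0,0,0)$ forces a loop whose $s<0$ branch carries negative densities by (C2)). Two small remarks on that half: the case where the vanishing of $R_e$ accumulates at $r=1$ (where $R_e$ vanishes identically in $s$ anyway) is not covered by your interior/endpoint dichotomy --- the paper handles it with a separate Rolle's-theorem argument producing $\partial_r R_e(s^\ddagger,1)=0$ and then ODE uniqueness --- and your assertion ``$\lambda^\ddagger\in\cS$'' is a type error ($\cS$ is a set of parameter triples $(a,b,\gamma)$, not of voltages), though the intended blow-up/kernel argument is harmless.

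The genuine gap is in the proof of \eqref{glow1}, i.e.\ the treatment of alternatives (a) and (b) of Theorem \ref{Global1}. You claim that, when $\|\rho_i\|_{C^0}+\|\rho_e\|_{C^0}$ stays bounded, integrating $\sF_3=0$ controls $\partial_r V$ well enough to keep $\lambda(s)$ and $\inf_r\partial_r\Phi(s)$ bounded away from $0$. But $\sF_3=0$ only yields $\|\partial_r V\|_{C^0}\le C(\|\rho_i\|_{C^0}+\|\rho_e\|_{C^0})$, so $\partial_r\Phi=\partial_r V+\lambda\,\partial_r H$ has no positive lower bound unless $\lambda$ dominates the density norms --- and nothing forces that; ``$\lambda$ sliding continuously from $\lambda^\dagger>0$'' is not an argument against $\lambda(s)\to0$ as $s\to\infty$. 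You acknowledge exactly this difficulty in your closing paragraph without resolving it, and it is where the real work of the paper lies. The paper's route (Lemmas \ref{lem(a)} and \ref{lem(b)}) is a compactness argument: assuming all relevant norms bounded and $\inf_r\partial_r\Phi(s_n)\to0$, one extracts a weak limit $(\lambda^*,\rho_i^*,R_e^*,V^*)$ solving the stationary system, analyzes the zero set of $\partial_r\Phi^*$, and exploits the infinite-order flatness of $h(\ell)=a\ell e^{-b/\ell}$ at $\ell=0$ through the Gronwall-type inequality $(\partial_r\Phi^*)^2(r)\le C\int\!\!\int e^{-b|\partial_r\Phi^*|^{-1}}$ to conclude $\partial_r\Phi^*\equiv0$, hence triviality of the limit and $\lambda^*=0$; the case $\lambda^*=0$ is then excluded separately by an energy estimate on $\sF_2=0$ that forces $R_e\equiv0$, contradicting positivity. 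Note also that the paper does not ``exclude'' (a) and (b): it shows each of them (like (c)) implies the $C^0$ blow-up, via the bootstrap Lemma \ref{lem(d)}, whose key step --- differentiating the integral formula for $\rho_i$ --- itself requires the lower bound on $\partial_r\Phi$ furnished by Lemma \ref{lem(b)}. Without these arguments your proof of \eqref{glow1} does not go through.
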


\begin{proof}
Let us define  
\begin{equation}   \label{Tddagger}
 s^\ddagger:=\inf\{s>0 \, ; \, R_e(s,r_0)=0 \ \text{for some $r_0 \in (1,2]$}\}.
\end{equation}    
Clearly $R_e>0$ in $(0,s^\ddagger) \times (1,2]$.     
By (C2) in Theorem \ref{Global1}, $s^\ddagger>0$. 
				In case $s^\ddagger=\infty$, 
then $R_e>0$ in $(0,\infty) \times (1,2]$.
Also  $\partial_r \Phi = \partial_r V+\lambda \partial_r  H \ne0$ due to $(\la(s),\rho_i(s),R_e(s),V(s)) \in \sO$.   
Since $\partial_r \Phi $ goes from 0 to $\la>0$,  it follows that $\partial_r \Phi  >0$ in $\bar{I}$.  
Thus the formula 
\begin{equation}\label{R_i1}
 \rho_{i}(r)=\frac{k_e}{k_i} \frac{1}{r^{2}\partial_r \Phi (r) }
\int_{1}^r t^{2} h(|\partial_r \Phi (r)|)e^{-\frac{\lambda}{2}H(t)}R_{e}(t)\,dt,
\end{equation}
 yields $\rho_i>0$.  
This proves the first line of alternative (i) in the theorem in case $s^\ddagger=\infty$.  
We will postpone the proof of \eqref{glow1} for a while.

Assuming now that $s^\ddagger<\infty$, we will show that (ii) happens.  
We will first prove that $R_e(s^\ddagger,\cdot)$ vanishes identically.  
By \er{Tddagger},   
$R_e(s^\ddagger,\cdot)$ takes the value zero, which is its minimum, 
at some point $r_0\in \bar{I}=[1,2]$. 
In  case $r_0$ belongs to the open interval $I=(1,2)$, 
$\partial_r R_e(s^\ddagger,r_0)=0$ also holds.
Solving $\sF_2(\lambda,\rho_i,R_e,V)=0$ with $R_e(s^\ddagger,r_0)=\partial_r R_e(s^\ddagger,r_0)=0$,
we see by uniqueness that $R_e(s^\ddagger)\equiv 0$.  
On the other hand, in case $r_0=1$, by \er{Tddagger} there exists a sequence $\{(s_n,r_n)\}_{n \in \mathbb N}$ such that 
$R_e(s_n,r_n)=0$ with $s_n \searrow s^\ddagger$ and $r_n \searrow 0$.
Rolle's theorem ensures that there also exists some $\tilde{r}_n\in(0,r_n)$ 
such that $\partial_r R_e(s_n,\tilde{r}_n)=0$.
Letting $n\to \infty$, we see that $\tilde{r}_n\to 0$ and thus $\partial_r R_e(s^\ddagger,0)=0$.
Hence we again deduce  by uniqueness that $R_e\equiv 0$.  
Thirdly, in case $r_0=2$, it is obvious that $\partial_r R_e(s^\ddagger,2)\leq 0$.
On the other hand, we see from $\sF_4=0$ and \er{R_i1} that
\[
\partial_rR_e(s^\ddagger,2)=\gamma\frac{k_i}{k_e}e^{\frac{\la}{2}H}(\partial_r \Phi)\ro_i(s^\ddagger,2)\geq 0.  
\]
This leads to $\partial_r R_e(s^\ddagger,2)=0$ so that $R_e\equiv 0$ once again.  
Therefore we conclude that $R_e\equiv 0$ in every case.  
By \er{R_i1}, we also have $\rho_i\equiv 0$ and thus $V\equiv0$.
Hence $(\rho_i,R_e,V)(s^\ddagger)=(0,0,0)$ is the trivial solution.
So (1) and (2) in the theorem are valid.

%
%

In order to complete the proof of (ii), we define $\la^\ddagger = \la(s^\ddagger)$.  
Of course, $\la^\ddagger \ge \la^\dagger$.  We must show that $\la^\ddagger > \la^\dagger$.  
On the contrary, suppose they were equal. 
Then the curve $\sK$ would go from the point $P=(\lambda^\dagger,0,0,0)$ at $s=0$ 
to the same point $P$ at $s={s^\ddagger}$.   
However by (C3) and (C4) of Theorem \ref{Global1},   
$\sK$ is a simple curve at $P$ and is real-analytic. 
So the only way $\sK$ could go from $P$ to $P$ would be if it were a loop with the part with 
$s$ approaching $s^\ddagger$ from below coinciding 
with the part with $s$ approaching $0$ from below ($s<0$).  
By (C2) of Theorem \ref{Global1}, $\rho_i(s,\cdot)$ and $R_e(s,\cdot)$ would be negative 
for $-1\ll s-s^\ddagger<0$, which would contradict their positivity.  
We conclude that $\lambda^\dagger < \lambda^\ddagger$.
\end{proof}

All that remains is to prove \eqref{glow1}, namely that   
either $\sup\rho_i$ or $\sup\rho_e$ or $\la$ is unbounded as $s\to\infty$.  
Our strategy is to prove that each alternatives (a), (b) and (d) in (C5) of Theorem \ref{Global1}.  
is either impossible or implies alternative (c).  We will do this in several steps.  
In these proofs, we will use the written boundary condition from $\sF_{4}=0$ and \er{R_i1}: 
\begin{align}
\partial_rR_e(2)&=-\left(\frac{\lambda}{2}\partial_{r} H (2) + \partial_{r} V(2) \right) R_{e}(2) 
 + \frac14 \gamma  e^{\frac{\lambda}{2}H(2)}  \int_{1}^2 r^{2} h(|\partial_r \Phi (r)|)e^{-\frac{\lambda}{2}H(r)}R_{e}(r)\,dr.
\lb{bc0}
\end{align}

\begin{lem}\lb{lem(a)}
Assume the global positivity alternative in Theorem \ref{Global2}.
If $\varliminf_{s \to \infty}\la(s)= 0$, then 
$\sup_{s>0}\|V(s)\|_{H^2}$ is unbounded.
\end{lem}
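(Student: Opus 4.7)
I would argue by contradiction. Suppose $M := \sup_{s>0}\|V(s)\|_{H^2} < \infty$, and, using $\varliminf \lambda(s)=0$, pick a sequence $s_n \to \infty$ with $\lambda_n := \lambda(s_n) \to 0$. Write $V_n, R_{e,n}, \rho_{i,n}$ for the corresponding quantities and $\Lambda_n := \partial_r V_n + \lambda_n \partial_r H$. The plan is to derive, via a weighted energy estimate, that $R_{e,n} \equiv 0$ for $n$ large, which contradicts the positivity $R_{e,n} > 0$ asserted by alternative~(i) of Theorem~\ref{Global2}.

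First, upgrade $\lambda_n \to 0$ into uniform decay of all relevant geometric quantities. Since $H^2(I) \Subset C^1(\bar I)$, extract a subsequence with $V_n \to V_\infty$ in $C^1(\bar I)$. Because the curve lies in $\sO$, $\Lambda_n > 0$ pointwise in $I$, so in the limit $\partial_r V_\infty \ge 0$ on $\bar I$; combined with the boundary values $V_\infty(1) = V_\infty(2) = 0$, monotonicity forces $V_\infty \equiv 0$. Hence $V_n \to 0$ and $\partial_r V_n \to 0$ uniformly on $\bar I$, $\Lambda_n \to 0$ uniformly, and since $h(\ell) = a\ell e^{-b/\ell} \to 0$ at $\ell = 0^+$, also $\sup_{\bar I} h(\Lambda_n) \to 0$.

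The core step is a weighted energy identity for $R_{e,n}$. Multiply $\sF_2(\lambda_n,\rho_{i,n},R_{e,n},V_n) = 0$ by $r^2 R_{e,n}$ and integrate over $I$; use $R_{e,n}(1)=0$ and substitute \er{bc0} (the boundary condition $\sF_4=0$ combined with the integrated form of $\sF_1=0$) to eliminate $\partial_r R_{e,n}(2)$. The naturally arising middle term $\tfrac12 \int r^2 (\rho_{i,n} - \rho_{e,n}) R_{e,n}^2 = \tfrac12 \int r^2 \Delta V_n R_{e,n}^2$ (using $\sF_3=0$) is handled by a second integration by parts, producing $-\int r^2 \partial_r V_n R_{e,n} \partial_r R_{e,n}$ together with a boundary piece that cancels another $\partial_r V_n(2) R_{e,n}(2)^2$ piece already present. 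The outcome is
\begin{equation*}
\|\partial_r R_{e,n}\|^2_{L^2} + \lambda_n R_{e,n}(2)^2 = \gamma e^{\lambda_n/2} R_{e,n}(2) \int_1^2 r^2 h(\Lambda_n) e^{-\lambda_n H/2} R_{e,n}\,dr - \int_1^2 r^2 \partial_r V_n R_{e,n} \partial_r R_{e,n}\,dr - \int_1^2 r^2 \Bigl[\tfrac{\la_n}{2}\partial_r V_n \partial_r H + \tfrac{\la_n^2}{4}|\partial_r H|^2 - h(\Lambda_n)\Bigr] R_{e,n}^2\,dr.
\end{equation*}
Each right-hand side term is bounded, via the Poincar\'e inequality ($\|R_{e,n}\|_{L^2} \le C\|\partial_r R_{e,n}\|_{L^2}$ thanks to $R_{e,n}(1)=0$) and $H^1 \hookrightarrow C^0$, by $\epsilon_n \|\partial_r R_{e,n}\|^2_{L^2}$ with $\epsilon_n := C(\lambda_n + \|\partial_r V_n\|_{C^0} + \sup_{\bar I} h(\Lambda_n))$, which tends to $0$ by the first step. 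Hence $\|\partial_r R_{e,n}\|^2_{L^2} \le \epsilon_n \|\partial_r R_{e,n}\|^2_{L^2}$, and for $n$ so large that $\epsilon_n < 1$ we conclude $R_{e,n} \equiv 0$.

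The main obstacle is precisely the term $\tfrac12 \int r^2 \Delta V_n R_{e,n}^2$: since only $\|V_n\|_{H^2} \le M$ is assumed, $\|\Delta V_n\|_{L^2}$ is merely bounded (of size $M$) and \emph{not} small, so a direct Cauchy--Schwarz estimate would leave an irreducible $O(M)$ factor and the scheme would collapse. The second integration by parts is essential: it trades $\Delta V_n$ for the $C^0$-small quantity $\partial_r V_n$ at the price of an extra $\partial_r R_{e,n}$ inside the integrand, which can then be absorbed into the left-hand side via Cauchy--Schwarz and Poincar\'e.
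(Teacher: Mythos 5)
Your proposal is correct and follows essentially the same route as the paper: extract a sequence with $\lambda(s_n)\to 0$, use compactness of $H^2\hookrightarrow C^1$ plus $\partial_r\Phi>0$ and the zero boundary values to force $V(s_n)\to 0$ in $C^1$, then run the weighted energy identity for $R_e$ with the boundary condition \eqref{bc0}, absorbing every term via Poincar\'e/Sobolev to conclude $R_e(s_n)\equiv 0$, contradicting positivity. The integration by parts you highlight as the key move (trading $\partial_r(r^2\partial_r V)R_e^2$ for $\partial_r V\,R_e\,\partial_r R_e$ plus a boundary term that cancels) is exactly the manipulation encoded in the paper's identity, and your displayed energy identity matches the paper's after substituting $\partial_r H(2)=\tfrac12$.
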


\begin{proof}
On the contrary suppose that $\sup_{s>0}\|V(s)\|_{H^2}$ is bounded.
Because $\varliminf_{s \to \infty}\la(s)= 0$ and $\partial_r \Phi (s,r)>0$ 
due to the definition of $\cO$ and \eqref{H1}, 
there exists a sequence $\{s_n\}_{n \in \mathbb N}$ 
and limits $(0,V^*)$ such that 
\begin{gather*}
\left\{
\begin{array}{lllll}
 \lambda(s_n) & \to & 0 & \text{in} & \mathbb R,
 \\
 V(s_n) & \to & V^* & \text{in} & C^1(\bar{I})   \supset H^2(I),  
 \\
\partial_r \Phi (s_{n}) & \to &   \partial_{r }V^*  & \text{in} & C^0(\bar{I}),
 \end{array}\right.
\\
V^*=0 \ \ \ \text{ at } r=1,2,  
\\
\partial_r V^*\geq 0.
\end{gather*}
These properties clearly imply that $V^* \equiv 0$. 
It follows that
for suitably large $n$ the three  expressions $\|h(|\partial_r \Phi (s_{n})|)\|_{C^0}$, 
$|\la(s_n)|$ and 
$\|V(s_n)\|_{C^1}$, 
are arbitrarily small.  

Multiplying $\sF_2(\la(s_n),\rho_i(s_n),R_e(s_n),V(s_n))=0$ by $r^{2}R_e(s_n)$ leads to
\begin{align*}
 r^{2} (\partial_{r} R_e )^{2}(s_{n})&
 =\partial_r\left\{ r^{2} R_e(s_n) \partial_r R_e(s_n) + r^{2} \partial_r V(s_n)R_e^2(s_n) \right\}
-r^{2}\partial_r V(s_n)R_e(s_n)\partial_r R_e(s_n)
\\
&\quad -r^{2}\left\{
\frac{\lambda(s_{n})}{2} \partial_{r} V(s_{n}) \partial_{r} H
+\frac{\lambda^2(s_{n})}{4}|\partial_{r} H|^{2}
-h\left(\left| \partial_r \Phi (s_{n}) \right|\right)\right\}R_e^2(s_n).
\end{align*}
Then integrating this equation by parts over $\bar{I}$, using $R_e(s_n,0)=0$, 
and substituting $\partial_r R_e(s_n,2)$ by \er{bc0}, we  have 
\begin{align*}
&\int_1^2 r^{2}(\partial_r R_e)^2(s_n) \,dr
\\
&=-4\left(\frac{\lambda(s_{n})}{2}\partial_{r} H (2) + \partial_{r} V(s_{n},2) \right) R_{e}^{2}(s_{n},2) 
\\
&\quad  +  4 \gamma  e^{\frac{\lambda(s_{n})}{2}H(2)} R_{e}(s_{n},2)  \int_{1}^2 r^{2} h(|\partial_r \Phi (s_{n})|)e^{-\frac{\lambda(s_{n})}{2}H(r)}R_{e}(s_{n})\,dr
\\
&\quad + 4 \partial_r V(s_n,2)R_e^2(s_n,2)-\int_1^2
r^{2}\partial_r V(s_n)R_e(s_n)\partial_xR_e(s_n)\,dr
\\
&\quad -\int_1^2
r^{2}\left\{
\frac{\lambda(s_{n})}{2} \partial_{r} V(s_{n}) \partial_{r} H
+\frac{\lambda^2(s_{n})}{4}|\partial_{r} H|^{2}
-h\left(\left| \partial_r \Phi (s_{n}) \right|\right)\right\}R_e^2(s_n)
\,dr
\\
&\leq \frac{1}{2}\int_1^2 (\partial_r R_e)^2(s_n) \,dr,
\end{align*}
where for the inequality we also have used 
Sobolev's and Poincar\'e's inequalities 
and taken $n$ suitably large in deriving the last inequality.
Hence $\partial_r R_e(s_n) \equiv0$.  
Since $R_e$ vanishes at $r=1$, we conclude that $R_e(s_n)\equiv 0$,   
which contradicts its assumed positivity. 
\end{proof}

\begin{lem}\lb{lem(b)}
Assume the global positivity alternative in Theorem \ref{Global2}.
If $\varliminf_{s \to \infty}\inf_{r \in I} \partial_r \Phi (s_{n},r)=0$, 
then $\sup_{s>0}\{\|\rho_i(s)\|_{C^0}+\|R_e(s)\|_{H^{2}}+\|V(s)\|_{H^2}+\lambda(s)\}$ is also unbounded.
\end{lem}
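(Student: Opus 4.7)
The plan is to argue by contradiction, following the scheme of Lemma~\ref{lem(a)} but exploiting the integrated form~\er{R_i1} of $\sF_{1}=0$ for $\rho_{i}$ rather than a direct energy estimate on $R_e$.

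Suppose $\sup_{s>0}\{\|\rho_i(s)\|_{C^0}+\|R_e(s)\|_{H^{2}}+\|V(s)\|_{H^2}+\la(s)\}<\infty$. Using the Sobolev embedding $H^{2}(I)\hookrightarrow C^{1}(\bar I)$ and Rellich compactness, I would extract a subsequence $s_n\to\infty$ along which $\la(s_n)\to\la^{*}\ge 0$, $V(s_n)\to V^{*}$ in $C^{1}(\bar I)$, $R_e(s_n)\to R_e^{*}$ in $C^{1}(\bar I)$, and $\partial_r\Phi(s_n,\cdot)\to\partial_r\Phi^{*}$ in $C^{0}(\bar I)$. Since $\inf_{r\in I}\partial_r\Phi(s_n,r)\to 0$, after a further subsequence the infimum is attained at some $r_n\in[1,2]$ with $r_n\to r^{*}$ and $\partial_r\Phi^{*}(r^{*})=0$. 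The case $\la^{*}=0$ reduces to Lemma~\ref{lem(a)}, so I assume $\la^{*}>0$.

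Using~\er{R_i1}, at $r=r_n$ the boundedness of $\|\rho_i(s_n)\|_{C^0}$ together with $\partial_r\Phi(s_n,r_n)\to 0$ yields $J(s_n,r_n)\to 0$, where $J(s,r):=\int_{1}^{r}t^{2}h(\partial_r\Phi(s,t))e^{-\la(s)H(t)/2}R_e(s,t)\,dt\ge 0$. Passing to the limit by uniform convergence of the integrand, one obtains
\begin{equation*}
\int_{1}^{r^{*}}t^{2}h(\partial_r\Phi^{*}(t))e^{-\la^{*}H(t)/2}R_e^{*}(t)\,dt=0.
\end{equation*}
Since the integrand is continuous and non-negative and $h(\ell)>0$ for $\ell>0$, it follows that $R_e^{*}(t)=0$ on $\{t\in[1,r^{*}]:\partial_r\Phi^{*}(t)>0\}$. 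In the main (non-degenerate) case, where this set contains an open subinterval, continuity gives $R_e^{*}\equiv 0$ on that subinterval; since $\sF_{2}(\la^{*},\rho_i^{*},R_e^{*},V^{*})=0$ is a second-order linear ODE for $R_e^{*}$, Cauchy uniqueness then forces $R_e^{*}\equiv 0$ on $[1,2]$. From \er{R_i1} we deduce $\rho_i^{*}\equiv 0$, and from $\sF_{3}=0$ with vanishing boundary data, $V^{*}\equiv 0$. Thus $(\la(s_n),\rho_i(s_n),R_e(s_n),V(s_n))\to(\la^{*},0,0,0)$ along a sequence of non-trivial positive-density solutions. If $\la^{*}=\la^{\dagger}$ this contradicts the local uniqueness statement (C3) of Theorem~\ref{Global1}, since for large $n$ the points $(\la(s_n),\rho_i(s_n),R_e(s_n),V(s_n))$ would lie in $\sW$ while $s_n\to\infty$, not $|s_n|<\ve$; if $\la^{*}>\la^{\dagger}$, a local bifurcation analysis at $(\la^{*},0,0,0)$ parallel to the proof of Theorem~\ref{Global2} produces points of $\sK$ with negative densities nearby, contradicting the global positivity hypothesis (i).

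The main obstacle is the degenerate case in which $\partial_r\Phi^{*}\equiv 0$ on the whole of $[1,r^{*}]$. Here $\Phi^{*}\equiv 0$ on $[1,r^{*}]$, so $V^{*}=-\la^{*}H$, and $\sF_{3}=0$ forces $\rho_i^{*}=e^{-\la^{*}H/2}R_e^{*}$; after the change of variable $R_e^{*}=e^{\la^{*}H/2}G$, the equation $\sF_{2}=0$ reduces to $\partial_r(r^{2}\partial_r G)=0$ on $[1,r^{*}]$, so that $R_e^{*}=\alpha H e^{\la^{*}H/2}$ there for some constant $\alpha\ge 0$. Combining the non-local cathode condition $\sF_{4}=0$ with the bound $J^{*}(2)\le 4C\,\partial_r\Phi^{*}(2)$ coming from $\|\rho_i(s_n)\|_{C^{0}}\le C$ should force $\alpha=0$, reducing this case to the previous one. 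This delicate step at the degenerate boundary is where I expect the bulk of the technical work to lie.
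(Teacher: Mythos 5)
Your compactness setup (extract $s_n\to\infty$ with $\la(s_n)\to\la^*$, $V(s_n)\to V^*$, $R_e(s_n)\to R_e^*$ in $C^1$, pass to the limit in the equations, locate a zero $r^*$ of $\partial_r\Phi^*$) is the same as the paper's, and your non-degenerate case corresponds to the paper's case where the first zero of $\partial_r\Phi^*$ lies in $(1,2]$. The genuine gap is the degenerate case, which you explicitly leave as a sketch, and it is exactly where the essential idea of the proof lives. Your key identity $\int_1^{r^*}t^2h(\partial_r\Phi^*)e^{-\la^*H/2}R_e^*\,dt=0$ is \emph{vacuous} when $r^*=1$, i.e.\ when the minimizers of $\partial_r\Phi(s_n,\cdot)$ accumulate at the anode --- a configuration you cannot exclude --- and then your proposed resolution (``$R_e^*=\alpha He^{\la^*H/2}$ on $[1,r^*]$'', ``$\sF_4=0$ should force $\alpha=0$'') says nothing, since $[1,r^*]$ is a single point and a priori $\partial_r\Phi^*$ could vanish only at $r=1$ while being positive on $(1,2]$. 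The missing step is the paper's Osgood-type argument: integrating $\sF_1=0$ from a zero $t^*$ of $\partial_r\Phi^*$ and using $\sF_3=0$ gives $\tfrac12\partial_r\bigl[(r^2\partial_r\Phi^*)^2\bigr]\le C\int_{t^*}^{r} e^{-b|\partial_r\Phi^*|^{-1}}$, hence $(\partial_r\Phi^*)^2(r)\le C\iint e^{-b|\partial_r\Phi^*|^{-1}}$; evaluating at the first point $r_n$ where $\partial_r\Phi^*=1/n$ yields $n^{-2}\le Ce^{-bn}$, a contradiction. This exploits the infinite-order vanishing of $h(\ell)=a\ell e^{-b/\ell}$ at $\ell=0^+$ (see \eqref{gh}) and is what forces $\partial_r\Phi^*\equiv0$ on \emph{all} of $[1,2]$; only then can the cathode condition \eqref{bc0} be passed to the limit to give $\partial_rR_e^*(2)=\tfrac{\la^*}{2}\partial_rH(2)R_e^*(2)$, which kills the constant in $R_e^*=c(r^{-1}-1)e^{\la^*H/2}$ and leads to $R_e^*\equiv0$, $V^*\equiv0$, $\la^*=0$, contradicting Lemma \ref{lem(a)}. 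Without this your proof does not close.

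Two secondary points in your non-degenerate case. First, \eqref{rho1} only yields $\rho_i^*=0$ a.e.\ on $\{\partial_r\Phi^*>0\}$; to get $\rho_i^*\equiv0$ (hence $V^*\equiv0$) you need the additional observation that, once $R_e^*\equiv0$, $\partial_r(r^2\partial_r\Phi^*)=r^2\rho_i^*\ge0$, so the nonnegative nondecreasing function $r^2\partial_r\Phi^*$ vanishes on all of $[1,r^*]$ and $\rho_i^*=0$ a.e.\ there as well. Second, your concluding contradiction via (C3) and a ``local bifurcation analysis at $(\la^*,0,0,0)$'' is both unnecessary and unjustified: nothing guarantees that $(\la^*,0,0,0)$ with $\la^*>\la^\dagger$ satisfies the one-dimensional nullspace and transversality hypotheses needed to produce a local curve, let alone one exhibiting negative densities. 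It is also superfluous: once $V^*\equiv0$ and $\la^*>0$ you have $\partial_r\Phi^*=\la^*\partial_rH\ge\la^*/2>0$, directly contradicting $\partial_r\Phi^*(r^*)=0$.
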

\begin{proof}
On the contrary, suppose that $\sup_{s>0}\{\|\rho_i(s)\|_{C^0}+\|R_e(s)\|_{H^2}+\|V(s)\|_{H^2}+\lambda(s)\}$ is bounded.
We see from $\varliminf_{s \to \infty}\inf_{r \in I} \partial_r \Phi (s,r)=0$ 
that there exist a sequence $\{s_n\}_{n \in \mathbb N}$
and a quadruple $(\la^*,\rho_i^*,R_e^*,V^*)$ with $\la^*<\infty $ such that
\begin{gather}
\left\{
\begin{array}{llllll}
 \lambda(s_n) & \to & \la^* & \text{in} & \mathbb R,
 \\
 \rho_{i}(s_n) & \rightharpoonup & \rho_i^* & \text{in} & L^\infty(I) & \text{weakly-star},
 \\
 R_{e}(s_n) & \to & R_{e}^* & \text{in} & C^1(\bar{I}),
 \\
 \partial_r^2 R_{e}(s_n) & \rightharpoonup & \partial_r^2R_{e}^* & \text{in} & L^2(I) & \text{weakly},
 \\
 V(s_n) & \to & V^* & \text{in} & C^1(\bar{I}),
 \\
 \partial_r^2 V(s_n) & \rightharpoonup & \partial_r^2V^* & \text{in} & L^2(I) & \text{weakly},
\\
\partial_r \Phi (s_{n}) & \to & \partial_r \Phi^{*} := \partial_{r} V^* + \lambda^{*} \partial_{r}H & \text{in} & C^0(\bar{I}),
 \end{array}\right.
\lb{converge3}\\
R_{e}^*(1)=V^*(1)=V^*(2)=0,
\lb{boundary1} \\
\rho_i^* \geq 0, \quad R_e^* \geq 0,
\lb{p1}\\
\inf_{r\in \bar{I}} \partial_r \Phi^{*} (r)=0.
\lb{singular1}
\end{gather}
			We shall  show that 
\begin{equation*}
\sF_j(\lambda^*,\rho_i^*,R_e^*,V^*)=0 \quad \text{for a.e. $r$ and  $j=1,2,3$.}
\end{equation*}
For $j=1$, the equation $\sF_1(\lambda(s_n),\rho_{i}(s_n),R_{e}(s_n),V(s_n))=0$ 
with $\rho_{i}(s_n,1)=0$ is equivalent to
\begin{align*}
r^{2} \partial_r \Phi (s_{n},r) \rho_{i}(s_{n},r)
=\frac{k_e}{k_i}\int_{1}^r t^{2} h( |\partial_{r} \Phi (s_{n},r)| )e^{-\frac{\lambda(s_n)}{2}H(t)}R_{e}(s_n,t)\,dt.
\end{align*}
		Multiplying  
by a test function $\varphi \in C^0(\bar{I})$ and integrating over $\bar{I}$, we obtain
\begin{align}
\int_{1}^{2} r^{2} \partial_r \Phi (s_{n},r) \rho_{i}(s_{n},r) \varphi \,dr
=\int_{1}^{2} \frac{k_e}{k_i} \left( \int_{1}^r t^{2} h(|\partial_r \Phi (s_{n},r)|)e^{-\frac{\lambda(s_n)}{2}H(t)}R_{e}(s_n,t)\,dt \right) \varphi  \,dr.
\lb{rho0}
\end{align}
		We note that 
\begin{align*}
&{}
\left|\int_{1}^{2} r^{2}\left[  \partial_r \Phi (s_{n},r) \rho_{i}(s_{n},r)
- \partial_r \Phi^{*} (r) \rho_{i}^{*}(r)\right]\varphi \,dr\right|
\\
&\leq 
\left|\int_{1}^{2}r^{2} \left\{\partial_r \Phi (s_{n},r) - \partial_r \Phi^{*} (r) \right\} \rho_{i}(s_n,r) \varphi \,dr\right|
+\left|\int_{1}^{2} r^{2} \partial_r \Phi^{*} (r)  \{\rho_i(s_n,r) - \rho_i^*(r)\}\varphi \,dr\right|.
\end{align*}
		So passing to the limit $n\to\infty$ in \er{rho0} and making use of  \er{converge3}, we obtain 
\begin{align*}
&\int_{1}^{2} r^{2} \partial_r \Phi^{*} (r) \rho_{i}^{*}(r) \varphi  \,dr
=\int_{1}^{2} \frac{k_e}{k_i} \left( \int_{1}^r t^{2} h(|\partial_r \Phi^{*} (t)|)e^{-\frac{\lambda^{*}}{2}H(t)}R_{e}^{*}(t)\,dt \right) \varphi  \,dr,   
 \quad   \forall\varphi \in C^0(\bar{I}). 
 \end{align*} 
This immediately yields 
\begin{equation}\lb{rho1}
r^{2} \partial_r \Phi^{*} (r)\rho_{i}^{*}(r) 
=\frac{k_e}{k_i}
\int_{1}^r t^{2} h( |\partial_r \Phi^{*} (r)|)e^{-\frac{\lambda^{*}}{2}H(t)}R_{e}^{*}(t)\,dt \quad a.e.,
\end{equation}
which is equivalent to $\sF_1(\lambda^*,\rho_i^*,R_e^*,V^*)=0$ a.e.

For $j=2$, we write  $r^{2} \sF_2(\lambda(s_n),\rho_{i}(s_n),R_{e}(s_n),V(s_n))=0$ and
$R_{e}(s_n,0)=0$ in the weak form as 
\bqn    \label{weakF2}
\int_1^2 r^{2} \partial_r R_{e}(s_n)\partial_r \varphi \,dr
+\frac{\lambda^{2}(s_n)}{4}\int_1^2 r^{2} |\partial_{r} H|^{2} R_{e}(s_n)\varphi \,dr
=-\int_1^2 r^{2} G_{2n}\varphi \,dr,  \quad \forall \varphi \in H^1_0(I),
\eqn 
where 
\begin{align*}
G_{2n} &:= -\partial_r V(s_n) \partial_r R_{e}(s_n) -  \frac{1}{r^{2}}\partial_{r} ( r^{2} \partial_r V (s_n)) R_{e}(s_n)
\\
& \qquad  +\left\{\frac{\lambda(s_n)}{2}\partial_r V(s_n) \partial_r H
-h\left(|\partial_r \Phi (s_{n},r)|\right)\right\}R_{e}(s_n).
\end{align*}
Note that the second term in $r^{2}G_{2n}$ can be treated as 
\begin{align*}
&{}
\left|\int_1^2 \left[ \{ \partial_{r} ( r^{2} \partial_r V (s_n)) \}R_{e}(s_n)- \{\partial_{r} ( r^{2} \partial_r V^{*}) \} R_{e}^* \right] \varphi \,dr\right|
\\
&\leq 
\left|\int_1^2 \{\partial_{r} ( r^{2} \partial_r V (s_n)) \} (R_{e}(s_n)-R_{e}^*) \varphi \,dx\right|
+\left|\int_1^2 \{\partial_{r} ( r^{2} \partial_r V (s_n)) - \partial_{r} ( r^{2} \partial_r V^{*}) \} {R_{e}^*\varphi} \,dx\right|.   
\end{align*}
Taking the limit $n\to\infty$  of \er{weakF2} in the weak form, and using \er{converge3}, we therefore have
\[
\int_1^2 r^{2} (\partial_r R_{e}^*) \partial_r \varphi \,dr
+\frac{(\lambda^{*})^2}{4}\int_1^2 r^{2} |\partial_{r} H|^{2}  R_{e}^*\varphi \,dr
=-\int_1^2 r^{2} G_{2}^*\varphi \,dr, \quad \forall \varphi \in H^1_0(I),
\]
where 
\[
G_{2}^*:= -(\partial_r V^{*}) \partial_r R_{e}^{*}
\\
-  \frac{1}{r^{2}}\partial_{r} ( r^{2} \partial_r V^{*})R_{e}^{*}
 +\left\{
  \frac{\lambda^{*}}{2}  \partial_r V^{*} \partial_r H  
-h\left(|\partial_r \Phi^{*} |\right)\right\}R_{e}^{*} \in L^2(I).
\]
This and \er{converge3} mean that $R_e^* \in C^1(\bar{I})\cap H^{2}(I)$ 
satisfies $\sF_2=0$.
Similarly for $j=3$, we have $\sF_3(\lambda^*,\rho_i^*,R_e^*,V^*)=0$.

We now set
\[
 r_*:=\inf\{r \in \bar{I} \, ; \, \partial_r \Phi^{*} (r)=0\}.
\]
We divide our proof into two cases $r_* = 1$ and $2\ge r_* >1$. 

\smallskip

We first consider the case $r_* >1$. 
The equation \er{rho1}, which holds for a sequence $r_\nu\to r_*$ and hence for $r=r_*$, 
yields the inequality  
\begin{equation*}
0=(r_{*})^{2}\partial_r \Phi^{*}(r_{*}) \|\rho_i\|_{L^\infty(I)}  \ge 
\frac {k_e}{k_i} 
\int_{1}^{r_{*}} t^{2} h( | \partial_r \Phi^{*}  |)e^{-\frac{\lambda^{*}}{2}H}R_{e}^{*}\,dt.
\end{equation*}
Together with the nonnegativity \er{p1} of $R_e^*$,   this implies that 
$(h(| \partial_r \Phi^{*}  |)e^{-\frac{\lambda^*}{2}H}R_{e}^*)(r)=0$ 
for $r \in [1,r_*]$.
From the definition of $r_*$ and $\partial_r \Phi^{*} (r)\ge0$ in $I$,  
we see that 
\begin{equation}\lb{regular1}
\partial_r \Phi^{*} (r)>0 \quad \text{for $r\in [1,r_*)$,}
\end{equation} 
so that $h(| \partial_r \Phi^{*} |)>0$ on $[1,r_*)$.
Therefore, $R_e^*(r)\equiv0$  in $[1,r_*)$.
Hence from \er{rho1} and \er{regular1},  
$\rho_i^*=0$ a.e.  in $[1,r_*)$.
Now from the equation $\sF_3(\lambda^*,\rho_i^*,R_e^*,V^*)=0$ and $\partial_{r} (r^{2} \partial_{r} H)=0$,
we see that $r^{2}\partial_r \Phi^{*} $ is a constant in $[1,r_*]$. 
From this fact and $\partial_r \Phi^{*} (r_{*})=0$, it holds that
$\partial_r \Phi^{*} =0$ in $[1,r_*]$.
This contradicts the definition of $r_*$.

\smallskip

We now consider the other case  $r_* = 1$.  
We claim that $\pa_r\Phi^*\equiv 0$.  
On the contrary,    
suppose that there exists $t_0>0$ such that
$\pa_r\Phi^*(t_0)>0$.   Recall that $\Phi^*(1)=0$.  
Let us set
\[
 t^*:=\sup\{r<t_0 \, ; \, \partial_r \Phi^{*} (r)=0 \}.
\]
Then  $t^* \in [1,t_0)$ and $\partial_r \Phi^{*} (t^*)=0$.
Integrating $\sF_1(\lambda^*,\rho_i^*,R_e^*,V^*)=0\ a.e.$ 
over $[t^*,t]$ for any $t \in [t^*,t_0]$
and using $\sF_3(\lambda^*,\rho_i^*,R_e^*,V^*)=0$, we have
\begin{align}
&t^{4}\partial_r \Phi^{*} (t)\left[ \frac{1}{t^{2}} \left\{ \partial_{r} (r^{2} \partial_{r}  V^*) \right\}(t)+e^{-\frac{\lambda^*}{2}H}R_{e}^*(t)\right]
\notag\\
&= \frac{k_e}{k_i} t^{2}
\int_{t^{*}}^t \tau^{2} h( |\partial_r \Phi^{*} |)e^{-\frac{\lambda^{*}}{2}H}R_{e}^{*}\,d\tau
\quad \text{for a.e. $t \in [t^*,t_0]$}. 
\lb{es1}
\end{align}
By \er{p1}, \er{singular1}, and $\partial_{r} (r^{2} \partial_{r} H)=0$, 
the left hand side of \er{es1} is estimated from below as
\begin{align*}
&t^{4}\partial_r \Phi^{*} (t)\left[ \frac{1}{t^{2}} \left\{ \partial_{r} (r^{2} \partial_{r}  V^*) \right\}(t)+e^{-\frac{\lambda^*}{2}H}R_{e}^*(t)\right]
\\
&\geq t^{2} \partial_r \Phi^{*} (t) \partial_{r} \{ r^{2}  \partial_r \Phi^{*} \}(t)
=\frac{1}{2}\partial_r \left[\left\{r^{2}  \partial_r \Phi^{*} \right\}^2\right](t) \quad a.e.,
\end{align*}
since $\partial_r V^*$ is absolutely continuous.  
The integrand on the right hand side of \eqref{es1} is estimated from above by 
$C e^{-b|\partial_r \Phi^{*} |^{-1}}$, due to the behavior of $h$; see \eqref{gh}.  
Consequently, substituting these expressions into \er{es1}, 
integrating the result over $[t^*,r]$, 
and using $\partial_r \Phi^{*} (t^*)=0$, we have
\begin{equation}\lb{es2}
\left(\partial_r \Phi^{*} \right)^2(r)
\leq C \int_{t^*}^r\int_{t^*}^t e^{-b|\partial_r \Phi^{*} |^{-1}} \,d\tau dt
\quad \text{for $r \in [t^*,t_0]$}. 
\end{equation}
Now let us define $r_n$ by
\[
 r_n:=\inf\left\{r\leq t_0 \, ; \,  \partial_r \Phi^{*} = \frac{1}{n}\right\}.
\]
Notice that $t^* < r_n$ and $\partial_r \Phi^{*}  (r)\leq 1/n$ for any $r\in [t^*,r_n]$,
since the continuous function $\partial_r \Phi^{*} $ vanishes at $r=t^*$.
Then we evaluate \er{es2} at $r=r_n$ to obtain
\[
\frac{1}{n^2}
\leq C \int_{t^*}^{r_{n}}\int_{t^*}^t e^{-b|\partial_r \Phi^{*} |^{-1}} \,d\tau dt
\leq C e^{-{bn}}.
\]
For suitably large $n$, this clearly does not hold.
So once again we have  a contradiction.  

Our conclusion is that $r_*=1$ and $\partial_r \Phi^{*} \equiv 0$.
Thus we have  $\partial_{r} (r^{2} \partial_{r} V^{*}) \equiv 0$ and so 
the equation  $\sF_2(\lambda^*,\rho_i^*,R_e^*,V^*)  =0$ yields
$\partial_{r} (r^{2} \partial_r (e^{-\la^* H/2}R_e^*) ) = 0$.
This means that $e^{-\la^* H/2}R_e^*(r)=cr^{-1}+d$ for some constants $c$ and $d$.
Furthermore, $d=-c$ also follows from \er{boundary1} and $H(1)=0$.
On the other hand, \er{bc0} holds for any $s_n>0$ and 
then using \er{converge3} and $\partial_r \Phi^{*}  \equiv 0$, we have
\begin{align*}
\partial_rR_e^{*}(2)= \frac{\lambda^{*}}{2}\partial_{r} H (2) R_{e}^{*}(2).
\end{align*}
Substituting $R_e^*(r)=c(r^{-1}-1)e^{\la^* H(r)/2}$, we find $c=0$.  
Consequently, $R_e^* \equiv 0$.  
Then  
we obtain $\rho_i^* \equiv 0$ from $\sF_3(\lambda^*,\rho_i^*,R_e^*,V^*)=0$ .
Solving $\sF_3(\lambda^*,\rho_i^*,R_e^*,V^*)=0$ with \er{boundary1} 
and $\rho_i^* \equiv R_e^* \equiv 0$,
we also have $V^* \equiv 0$.
Consequently $\la^*=0$ holds and  $\varliminf_{s\to0}\la(s)=0$.
This contradicts Lemma \ref{lem(a)}, since
$\sup_{s>0}\|V(s)\|_{H^2}$ is bounded.
\end{proof}

Next, we reduce Condition (c) in Theorem \ref{Global1} to a simpler condition without derivatives.
We write the result directly in terms of the ion density $\rho_i$ and
the electron density $\rho_e=R_e e^{-\la H/2}$.

\begin{lem}\lb{lem(d)}
Assume the global positivity alternative in Theorem \ref{Global2}. \  
If $\sup_{s>0}\{\|\rho_i(s)\|_{C^0}+\|\rho_e(s)\|_{C^0}+\lambda(s)\}$ 
is bounded, 
then $\sup_{s>0}\{\|\rho_i(s)\|_{C^1}+\|R_e(s)\|_{C^2}+\|V(s)\|_{C^3}\}$ 
is bounded.
\end{lem}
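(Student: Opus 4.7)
The plan is to bootstrap regularity through the system $\sF_1,\sF_2,\sF_3$ in the order $V\to R_e\to\rho_i\to V$, using the crucial algebraic identity $\partial_r^2\Phi+\tfrac2r\partial_r\Phi=\rho_i-\rho_e$ (from $\sF_3=0$ and harmonicity of $H$) to eliminate $\partial_r^2 V$ from the right-hand sides. At one stage we will need a uniform lower bound on $\partial_r\Phi$, and this is exactly where Lemma \ref{lem(b)} is brought in.

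First, the assumption immediately gives $\|R_e(s)\|_{C^0}\le C\|\rho_e(s)\|_{C^0}$ uniformly. From $\sF_3=0$, one has $\partial_r(r^2\partial_r V)=r^2(\rho_i-\rho_e)\in L^\infty$ uniformly; integrating twice against the Dirichlet data $V(1)=V(2)=0$ yields a uniform $C^2$ bound on $V$, so $\partial_r\Phi=\partial_r V+\lambda\partial_r H$ is uniformly bounded in $C^1$ (and in particular $h(|\partial_r\Phi|)$ is uniformly bounded in $C^0$). Next, substituting $\frac1{r^2}\partial_r(r^2\partial_r V)=\rho_i-\rho_e$ into $\sF_2=0$ recasts the $R_e$-equation as a second-order linear ODE whose coefficients are already uniformly bounded in $C^0$. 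Multiplying by $r^2R_e$ and integrating, with $R_e(1)=0$ and $\partial_rR_e(2)$ controlled by \eqref{bc0} (whose right-hand side is uniformly bounded), one obtains by Sobolev and Young's inequalities a uniform $H^1$ bound on $R_e$. Reading $\partial_r^2R_e$ off the ODE gives a uniform $H^2$ bound, and then, since $H^2(I)\hookrightarrow C^1(\bar I)$, the right-hand side of the ODE lies in $C^0$, promoting $R_e$ to a uniform $C^2$ bound.

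To bound $\rho_i$ in $C^1$ from $\sF_1=0$, a direct computation yields the clean identity
\begin{equation*}
\partial_r\Phi\cdot\partial_r\rho_i=\tfrac{k_e}{k_i}h(|\partial_r\Phi|)e^{-\lambda H/2}R_e-\rho_i(\rho_i-\rho_e),
\end{equation*}
where the fortunate cancellation of the $\lambda\rho_i/r^3$ terms uses $\partial_r(r^2\partial_r H)=0$. The right-hand side is uniformly bounded in $C^0$, so $\|\partial_r\rho_i\|_{C^0}$ will be uniformly bounded provided we have a uniform lower bound $\partial_r\Phi\ge c>0$ along $\sK$. This is the main obstacle. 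I would obtain it by combining the previous two steps (which produce uniform $H^2$ bounds on both $V$ and $R_e$) with the contrapositive of Lemma \ref{lem(b)}: since $\|\rho_i\|_{C^0}+\|R_e\|_{H^2}+\|V\|_{H^2}+\lambda$ is uniformly bounded along $\sK$, one cannot have $\varliminf_{s\to\infty}\inf_r\partial_r\Phi(s,r)=0$. On any compact parameter interval $[0,S]$, the map $s\mapsto\partial_r\Phi(s,\cdot)$ is continuous in $C^0$ and $\inf_r\partial_r\Phi(s,r)>0$ for each $s$ (being $\lambda^\dagger\partial_rH>0$ at $s=0$), so a continuity-compactness argument upgrades this to a uniform positive lower bound on all of $\sK$.

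With $\partial_r\Phi\ge c>0$ in hand, the identity above yields the uniform $C^1$ bound on $\rho_i$. Finally, $\rho_e=R_ee^{-\lambda H/2}$ inherits a uniform $C^2$ bound, hence $\rho_i-\rho_e\in C^1$ uniformly, and differentiating $\sF_3=0$ once more upgrades $V$ from $C^2$ to a uniform $C^3$ bound, completing the proof. The only substantive difficulty is the lower bound on $\partial_r\Phi$; everything else is straightforward bootstrapping of linear ODE estimates on the interval $I$.
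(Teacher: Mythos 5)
Your argument is correct and follows essentially the same route as the paper's proof: bound $V$ in $C^2$ from $\sF_3=0$, bootstrap $R_e$ to $C^2$ from $\sF_2=0$, invoke the contrapositive of Lemma \ref{lem(b)} to obtain the positive lower bound on $\partial_r\Phi$, then bound $\rho_i$ in $C^1$ and finally $\partial_r^3V$. The only cosmetic difference is that you differentiate $\sF_1=0$ pointwise (using $\partial_r(r^2\partial_rH)=0$) where the paper differentiates the integral formula \eqref{R_i1}; these are equivalent, and your extra details (the energy estimate for $R_e$ and the compactness argument for the uniform lower bound on $\partial_r\Phi$) merely fill in steps the paper leaves implicit.
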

\begin{proof}
It is clear from $\sF_3=0$
together with the definition  $\rho_e=R_ee^{-\la H/2}$
that \begin{equation*}
\sup_{s>0}\|V(s)\|_{C^2}
\leq C \sup_{s>0}\{\|\rho_i(s)\|_{C^0}+\|\rho_e(s)\|_{C^0}\}
<+\infty. 
\end{equation*}
From this, the equation $\sF_2=0$, and $\sup_{s>0}\la(s)<+\infty$, 
we also deduce that $\sup_{s>0}\|R_e(s)\|_{C^{2}}<+\infty$.
Note that $\|\cdot \|_{H^{k}} \leq C \|\cdot \|_{C^{k}} $ for $k \in \mathbb N$.
Now  Lemma \ref{lem(b)} implies that  
$\varliminf_{s \to \infty}\inf_{r \in I}\partial_r \Phi (s,r)\neq 0$.
Together with  \er{R_i1}, this result  leads to $\sup_{s>0}\|\rho_i(s)\|_{C^1}<+\infty$.
Finally the bound $\sup_{s>0}\|\partial_r^3 V(s)\|_{C^0}$ $<+\infty$
follows from $\sF_3(\lambda(s),\rho_i(s),R_e(s),V(s))=0$.
\end{proof}


\noindent  {\it Completion of proof of Theorem \ref{Global2}}:    
    
We have already proven that there are two alternatives in Theorem \ref{Global2}.
Assuming the first one, namely that 
both $\rho_i(s,r)$ and $\rho_e(s,r)=(R_ee^{-\la H/2})(s,r)$ 
are positive  for all $s\in(0,\infty)$ and $r \in (1,2]$, 
it suffices to prove  \eqref{glow1}, namely that 
either $\sup\rho_i$ or $\sup\rho_e$ or $\la$ is unbounded as $s\to\infty$.

Looking back at Theorem \ref{Global1}, we see that there are four alternatives to the bifurcating curve $\cK$.   
Alternative (d) cannot happen because 
$\rho_i$ and $R_e$ are negative on part of the loop. 
Lemmas \ref{lem(a)} and \ref{lem(b)} assert that either (a) or (b) implies that 
$\sup_{s>0}\{\|\rho_i(s)\|_{C^0}+\|R_e(s)\|_{H^2}+\|V(s)\|_{H^2}+\lambda(s)\}$ is unbounded.
Of course,  $\|\cdot \|_{H^{k}} \leq C \|\cdot \|_{C^{k}} $.  
Thus Lemma \ref{lem(d)} implies that $\sup_{s>0}\{\|\rho_i(s)\|_{C^0}+\|\rho_e(s)\|_{C^0}+\lambda(s)\}$ 
must also be unbounded.  
Thus \eqref{glow1} holds.
\qed 



\section{Asymptotically High Voltage} \label{SAHV}

In this section we consider the case when both densities remain positive all along the whole global bifurcating curve $\cK$.  We will prove that if both densities are bounded along a sequence $s_n\to\infty$, 
then they actually tend to zero.  This means the gas becomes dilute.  By Theorem \ref{Global2}, in this scenario the voltages along the sequence must be unbounded.  

\begin{pro}\lb{lem(c)}

Assume $\gamma(1+\gamma)^{-1}\neq e^{-a}$ and Alternative {\rm (i)} in Theorem \ref{Global2}.
If $\varlimsup_{s\to\infty}\{\|\rho_i(s)\|_{C^0}+\|\rho_e(s)\|_{C^0}\}$ is bounded, 
then $\varliminf_{s\to\infty}\{\|\rho_i(s)\|_{C^0} + \|\rho_e(s)\|_{L^{1}}\}=0$ 
and $\varlimsup_{s\to\infty} \lambda(s) = \infty$.
\end{pro}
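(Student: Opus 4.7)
My plan is to treat the two conclusions of the proposition separately. The statement $\varlimsup_{s\to\infty}\lambda(s)=\infty$ will follow at once: we are under Alternative (i) of Theorem \ref{Global2}, whose conclusion \eqref{glow1} combined with the bounded-density hypothesis forces $\lambda(s)$ to be unbounded.

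For the first conclusion I will argue by contradiction: assume $\|\rho_i(s)\|_{C^0}+\|\rho_e(s)\|_{L^1}\ge\delta>0$ for all sufficiently large $s$, and select a sequence $s_n\to\infty$ with $\lambda_n:=\lambda(s_n)\to\infty$. The strategy is to derive, in this limit, the Townsend discharge relation $\gamma/(1+\gamma)=e^{-a}$, which will contradict the hypothesis. The key \emph{exact} identity valid along $\sK$ comes from integrating the system: $\sF_1=0$ with $\rho_i(1)=0$ yields the ion current
\[
J_i(r):=k_i r^2\rho_i\,\partial_r\Phi=k_e\int_1^r t^2 h(|\partial_r\Phi|)\rho_e\,dt,
\]
while the stationary electron continuity $\sF_2=0$ gives $\partial_r J_e=k_e r^2 h\rho_e=\partial_r J_i$, where $J_e=r^2\rho_e u_{e,r}=-k_e r^2(\rho_e\partial_r\Phi+\partial_r\rho_e)$. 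The cathode boundary condition $J_e(2)=-\gamma J_i(2)$ encoded in $\sF_4=0$, together with $J_i(1)=0$, forces $J_e(1)=-(1+\gamma)J_i(2)$, so
\[
\frac{J_e(2)}{J_e(1)}=\frac{\gamma}{1+\gamma}\qquad\text{exactly along }\sK.
\]

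Next I will compute this same ratio asymptotically. From the bounded-density hypothesis and elliptic regularity applied to $\sF_3=0$, $V(s_n)$ is uniformly bounded in $C^{1,\alpha}(\bar I)$, so $\partial_r\Phi_n=\partial_r V(s_n)+\lambda_n\partial_r H\to+\infty$ uniformly on $\bar I$ and $h(|\partial_r\Phi_n|)/\partial_r\Phi_n=a\,e^{-b/\partial_r\Phi_n}\to a$ uniformly. Introducing $\sigma:=\rho_e e^{\Phi}$, which satisfies the Sturm--Liouville equation $(r^2 e^{-\Phi}\sigma')'=-r^2 h\sigma e^{-\Phi}$ with $\sigma(1)=0$, one has $J_e=-k_e r^2 e^{-\Phi}\partial_r\sigma$ and
\[
\partial_r\log|J_e|=\frac{\partial_r J_e}{J_e}=-\frac{h}{\partial_r\log\sigma}.
\]
A WKB analysis of the $\sigma$-equation as $\lambda_n\to\infty$ will show that its solutions are asymptotically spanned by $e^{\Phi-a(r-1)}$ and $e^{a(r-1)}$; the boundary condition $\sigma(1)=0$ selects $\sigma\sim c(e^{\Phi-a(r-1)}-e^{a(r-1)})$, and for $r$ bounded away from $1$ the first summand dominates, giving $\partial_r\log\sigma\sim\partial_r\Phi-a$, while the anode boundary layer of width $O(1/\lambda_n)$ contributes $O(1/\lambda_n)$ to the integral. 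I thus obtain $\int_1^2\partial_r\log|J_e|\,dr\to -a$, hence $|J_e(2)/J_e(1)|\to e^{-a}$, which contradicts the exact identity under the hypothesis $\gamma/(1+\gamma)\neq e^{-a}$.

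The main obstacle will be the rigorous WKB / matched-asymptotic analysis of the variable-coefficient $\sigma$-equation, and in particular the uniform treatment of the thin anode boundary layer where $\rho_e=0$ and the electron-diffusion term $\partial_r\rho_e$ is not negligible. An alternative route avoiding explicit WKB is a compactness argument on the renormalized variable $\tau(r):=\sigma(r)/\sigma(2)$, which satisfies the same linear ODE with $\tau(1)=0$ and $\tau(2)=1$: one extracts a limit on $\bar I$ solving a limiting problem from which the Townsend relation emerges as the necessary consistency condition at $r=2$.
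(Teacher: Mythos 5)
Your first conclusion ($\varlimsup_{s\to\infty}\lambda(s)=\infty$) is handled exactly as in the paper, and your exact current identity is correct and is in fact a clean repackaging of what the paper uses: writing $J_i=k_ir^2\rho_i\partial_r\Phi$ and $J_e=r^2\rho_eu_{e,r}$, one has $\partial_rJ_i=\partial_rJ_e$, $J_i(1)=0$ and $J_e(2)=-\gamma J_i(2)$, whence $J_e(2)/J_e(1)=\gamma/(1+\gamma)$. The gap is in the second half. The entire burden of the proof is the claim $|J_e(2)/J_e(1)|\to e^{-a}$, and this rests on a uniform two-mode WKB description of $\sigma=\rho_ee^{\Phi}$ on all of $[1,2]$, including the anode layer of width $O(1/\lambda)$ where $\Phi=O(1)$ and the two modes $e^{\Phi-a(r-1)}$, $e^{a(r-1)}$ are not separated. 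You explicitly defer this ("the main obstacle"), but it is not a technicality: it is precisely the pointwise control that the authors state they could \emph{not} obtain (they remark that they cannot even show $\|\rho_e\|_{C^0}\to0$). There is also a structural warning sign: your asymptotic ratio computation is scale-invariant in $\rho_e$ and never invokes the negated conclusion $\|\rho_i\|_{C^0}+\|\rho_e\|_{L^1}\ge\delta$. If the claim $|J_e(2)/J_e(1)|\to e^{-a}$ held for every nontrivial solution with $\lambda\to\infty$ and bounded densities, you would contradict the mere \emph{existence} of the (everywhere nontrivial, by Alternative (i)) solutions along $\sK$, i.e.\ you would be proving that the hypothesis set is empty rather than the stated conclusion. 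A correct argument must leave room for solutions whose weak limit is zero while the solutions themselves remain nontrivial; your ratio argument, as formulated, cannot distinguish these cases.

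The paper's proof avoids all of this with a weak-compactness argument. It first gets $\|V\|_{C^2}$ and $\|\rho_i\|_{C^1}$ bounds, extracts $\rho_i(s_n)\to\rho_i^*$ in $C^0$ and $\rho_e(s_n)\rightharpoonup\rho_e^*$ weak-star in $L^\infty$, then integrates the electron equation over $[r,2]$, uses the cathode condition, and \emph{divides by} $\lambda$. Tested against $\phi\in C_0^\infty$, the diffusion term becomes $O(1/\lambda)$ and disappears, while $h(\partial_r\Phi)/\lambda\to a\,\partial_rH$ uniformly. Combining the resulting limit identity with the integrated ion equation yields a linear Volterra-type equation for $m(r)=r^2\partial_rH\,\rho_e^*$, namely $m(r)=a\gamma\int_1^2m\,dt+a\int_r^2m\,dt$, which upon multiplying by $e^{ar}$ and integrating forces $\int_1^2m\,dt\,\{e^a-\gamma(e^{2a}-e^a)\}=0$; since $\gamma(1+\gamma)^{-1}\neq e^{-a}$ and $m\ge0$, this gives $\rho_e^*\equiv0$, hence $\|\rho_e(s_n)\|_{L^1}\to0$ and then $\|\rho_i(s_n)\|_{C^0}\to0$ via the integral formula for $\rho_i$. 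This is the same Townsend dichotomy you are aiming for, but realized at the level of the weak limit, where no boundary-layer or WKB control is needed. To salvage your route you would have to either carry out the matched asymptotics rigorously (and explain why it does not contradict the existence of the curve), or switch to the weak formulation — at which point you have reproduced the paper's proof.
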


\begin{proof}  
We see from \er{sp0} that $(\rho_i,\rho_e,V)$ solves the system 
\begin{subequations}\lb{r5}
\begin{gather}
k_i \partial_{r} \left\{ r^{2} \rho_i  \left(\partial_{r} V+ \la\partial_{r} H \right) \right\}
=k_e r^{2} h\left(\left| \partial_{r} V+ \la\partial_{r} H \right|\right)\rho_e,
\lb{re6} \\
-  \partial_{r} \{r^{2} \partial_{r}  \rho_{e}+ r^{2} ( \partial_{r}V+\lambda \partial_{r} H) \rho_{e}  \}
=r^{2} h\left(\left| \partial_{r} V+ \la\partial_{r} H \right|\right)\rho_e,
\lb{re7} \\
\partial_{r} ( r^{2} \partial_{r}  V) = r^{2}(\rho_i-\rho_e),
\lb{re8} \\ 
\partial_r\rho_e(2) +  (\partial_{r} V+ \la\partial_{r} H) \rho_e (2)  =
\gamma\frac{k_i}{k_e} (\partial_{r} V+ \la\partial_{r} H) \rho_i (2)
\lb{re9}
\end{gather}
with boundary conditions
\begin{equation}\lb{rb5}
\rho_i(1)=\rho_e(1)=V(1)=V(2)=0.
\end{equation}
\end{subequations}
Obviously, $\varlimsup_{s\to\infty} \lambda(s) = \infty$ holds from the condition $\varlimsup_{s\to\infty}\{\|\rho_i(s)\|_{C^0}+\|\rho_e(s)\|_{C^0}\}<+\infty$
and Alternative (i) in Theorem \ref{Global2}.
Therefore, supposing that  
there is a sequence $\{s_n\}_{n \in \mathbb N}$ such that
\begin{equation}\lb{cond5}
\lim_{n\to\infty}s_n=\infty, \quad
\sup_{n \geq 1}(\|\rho_i(s_n)\|_{C^0}+\|\rho_e(s_n)\|_{C^0})<+\infty, \quad
\lim_{n\to\infty}\la(s_n)=\infty , 
\end{equation}
it suffices to show $\lim_{{n}\to\infty} (\|\rho_i(s_{n})\|_{C^0}+\|\rho_e(s_{n})\|_{L^1})=0$.

First, it is clear from \er{re8} and \er{rb5} that
\begin{equation}\lb{bound_V}
\sup_{n\geq1}\|V(s_n)\|_{C^2}
\leq C \sup_{n\geq1}\{\|\rho_i(s_n)\|_{C^0}+\|\rho_e(s_n)\|_{C^0}\}
<+\infty. 
\end{equation}
Solving \er{re6} for $\partial_r \rho_i$ and writing $h$ explicitly from \er{gh}, we have 
\[
\partial_r \rho_i
=\frac{k_e}{k_i} a \exp\left(\frac{-b}{|\partial_{r} V+ \la\partial_{r} H|}\right) 
\frac{|\partial_{r} V+ \la\partial_{r} H|}{r^{2}(\partial_{r} V+ \la\partial_{r} H)}\rho_e
- \frac{\partial_{r} ( r^{2} \partial_{r}  V) }{r^{2}(\partial_{r} V+ \la\partial_{r} H)}\rho_i.
\]
From 
this identity, \er{cond5}, \er{bound_V}, $\partial_{r} H \geq \tfrac{1}{2}$ 
and $\lim_{n\to\infty} \la(s_n)=\infty$, we see that
$\sup_{n\geq1}\|\rho_i(s_n)\|_{C^1}<+\infty$.  
Thus there exist a subsequence [still denoted by $s_n$]
and $(\rho_i^*,\rho_e^*,V^*)$ such that
\begin{gather}
\left\{
\begin{array}{llllll}
 \lambda(s_n) & \to & \infty & \text{in} & \mathbb R,
 \\
 \rho_{i}(s_n) & \to & \rho_i^* & \text{in} & C^0(\bar{I}), & 
 \\
 \rho_{e}(s_n) & \rightharpoonup & \rho_{e}^* & \text{in} & L^\infty(I) & \text{weakly-star},
 \\
 V(s_n) & \to & V^* & \text{in} & C^1(\bar{I}),
 \end{array}\right.
\lb{converge5}\\
\rho_{1}^*(1)=V^*(1)=V^*(2)=0,
\lb{boundary5} \\
\rho_i^* \geq 0, \quad \rho_e^* \geq 0.
\notag
\end{gather}  

Now 
integrating \er{re7} over $[r,2]$, using \er{re9} and $h(\ell)=a\ell\exp(-\frac{b}{\ell})$,  
and multiplying the result by $\lambda^{-1}$, we obtain
		\begin{align}    \label{identityG}
& 
-4\gamma\frac{k_i}{k_e}\left(\frac{\partial_r V(2)}{\lambda}+\partial_{r} H(2) \right)\rho_i(2)
+ \frac{r^{2}}{\la} \partial_{r}  \rho_{e}(r)
+r^{2} \left( \frac{\partial_{r}V(r)}{\la}+ \partial_{r} H(r)\right) \rho_{e}(r)
\\    \label{identityH}  
&=a\int^2_r t^{2} \exp\left(\frac{-b}{|\partial_{r} V(t)+ \la\partial_{r} H(t)|}\right)
\left| \frac{\partial_{r} V(t)}{\la}+ \partial_{r} H(t)\right| \rho_e(t)\,dt.
		\end{align}
We take this identity at $s=s_n$ and will look at the behavior of each term as $s_n\to\infty$.  
We multiply it by a test function $\phi \in C_0^\infty(I)$, 
integrate it over $\bar{I}$, and let $n \to \infty$.
		From \er{cond5}--\er{converge5}, each term in \er{identityG} converges as follows:
\begin{gather*}
-4\gamma\frac{k_i}{k_e}\left(\frac{\partial_r V(s_n,2)}{\lambda(s_n)}+\partial_r H(2)\right)\rho_i(s_n,2)\int_1^2   \phi(r) \,dr
\to - 4\gamma\frac{k_i}{k_e} \partial_r H (2) \rho_i^*(2) \int_1^2  \phi(r) \,dr,
\\
\int_1^2 \frac{r^{2}}{\lambda}\partial_r\rho_e(s_n,r) \phi(r) \,dr
=-\int_1^2 \frac{1}{\lambda}\rho_e(s_n,r) \{2r \phi(r) + r^{2}\partial_r\phi(r) \} \,dr
\to 0,
\\
\int_1^2 r^{2} \left(\frac{\partial_x V(s_n,r)}{\lambda(s_n)}+\partial_r H(r)\right)\rho_e(s_n,r)  \phi(r) \,dr
\to \int_1^2 r^{2} \partial_r H(r) \rho_e^*(r)\phi(r) \,dr.
\end{gather*}
		In order to investigate 
\er{identityH} multiplied by $\phi(r)$ and integrated, we let
\begin{align*}
I_{0,n}&:=\int_1^2 \left[a\int^2_r t^{2} \exp\left(\frac{-b}{|\partial_r V(s_n,t)+\lambda(s_n)\partial_{r} H (t)|}\right)
\left|\frac{\partial_r V(s_n,t)}{\lambda(s_n)}+\partial_{r} H (t) \right| \rho_e(s_n,t)\,dt \right] \phi(r)\,dr,
\\
I_{1,n}&:=\int_1^2 \left[ a \int^2_r t^{2} \partial_{r} H (t) \rho_e(s_n,t)  \,dt \right]\phi(r)\,dr.
\end{align*}
		Then from \er{cond5}--\er{converge5} and $\partial_{r} H \geq \tfrac{1}{2}$ we have 
\begin{align*}
&\begin{aligned}
|I_{0,n}-I_{1,n}| &\leq       \|\rho_e\|_{L^1}\|\phi\|_{L^1} 
\\
& \quad \times \sup_{t \in \bar{I}}  \left| \exp\left(\frac{-b}{|\partial_r V(s_n,t)+\lambda(s_n)\partial_{r} H (t)|}\right)
\left|\frac{\partial_r V(s_n,t)}{\lambda(s_n)}+\partial_{r} H (t) \right|-\partial_{r} H (t)\right| \to 0,
\end{aligned}
\\
&\begin{aligned}
I_{1,n} & = a \int_1^2 t^{2} \partial_{r} H (t) \rho_e(s_n,t)  \left[\int_1^t \phi(r) \,dr \right]\,dt
\\
& \to \int_1^2 \left[ a \int^2_r t^{2} \partial_{r} H (t) \rho^{*}_e(t)  \,dt \right]\phi(r)\,dr.
\end{aligned}
\end{align*}
		Therefore we deduce from \er{identityG}--\er{identityH} that
\begin{equation}\lb{rhoe*}
- 4\gamma\frac{k_i}{k_e} \partial_r H (2) \rho_i^*(2) 
 + r^{2} \partial_r H(r) \rho_e^*(r)
 =  a \int^2_r t^{2} \partial_{r} H (t) \rho^{*}_e(t)  \,dt  \quad a.e. 
\end{equation}

Now integrating \er{re6} with \er{boundary5}, we have
\begin{align}   
\rho_i(s_n,r) 
=a r^{-2}\left\{\frac{\partial_{r} V (s_n,r)}{\la(s_{n})}+ \partial_{r} H (r) \right\}^{-1}
\frac{k_e}{k_i}\int_1^r   K_n(t)   \rho_e(s_n,t) \,dt
\leq C \|\rho_e(s_n)\|_{L^1},
\label{Kny} 
\end{align}
				where 
\bqn  
K_n(t)   := t^{2} \exp\left(\frac{-b}{|\partial_r V(s_n,t)+\lambda(s_n)\partial_{r} H (t)|}\right)
\left|\frac{\partial_r V(s_n,t)}{\lambda(s_n)}+\partial_{r} H (t)\right|.
\notag
\eqn 	
We used \er{bound_V} in deriving the last inequality.
Evaluating \er{Kny} at $r=2$, we have 
\begin{align*}
4\left\{\frac{\partial_{r} V (s_n,2)}{\la(s_{n})}+ \partial_{r} H (2) \right\}\rho_i(s_n,2) 
=a \frac{k_e}{k_i}\int_1^2   K_n(t)   \rho_e(s_n,t) \,dt.
\end{align*}
But $K_n(t)\to t^{2} \partial_{r} H (t)$ uniformly and $\rho_e(s_n) \rightharpoonup \rho_e^*$ in $L^\infty$ weakly-star.  
Therefore, letting $n\to\infty$, 
we get 
\begin{equation}\lb{rhoi*}
4 \partial_r H (2) \rho_i^*(2) =a\frac{k_e}{k_i}\int_1^2 t^{2} \partial_{r} H (t) \rho_{e}^{*}(t)  \,dt. 
\end{equation}
 Combining \er{rhoe*} and \er{rhoi*} we find the identity 
\begin{equation}\lb{claim*}
-a\gamma \int_1^2 t^{2} \partial_{r} H (t) \rho_{e}^{*}(t)  \,dt
 + r^{2} \partial_r H(r) \rho_e^*(r)
 =  a \int^2_r t^{2} \partial_{r} H (t) \rho^{*}_e(t)  \,dt  \quad a.e. 
\end{equation}
		Note that 
\er{claim*} implies that $(\cdot)^{2} \partial_{r} H (\cdot) \rho^{*}_e(\cdot) $ is a continuous function. 
Now multiplying the identity by $e^{ar}$, we have
\begin{equation*}
\partial_r \left(e^{ar} \int_r^2 t^{2} \partial_{r} H (t) \rho^{*}_e(t)   \,dt\right)=-a\gamma e^{ar} \int_1^2 t^{2} \partial_{r} H (t) \rho^{*}_e(t)   \,dt
\quad a.e. 
\end{equation*}
Then integration  over $\bar{I}$ leads to
\begin{equation*}
\int_1^2 t^{2} \partial_{r} H (t) \rho^{*}_e(t)  \,dt \, \{e^{a}-\gamma(e^{2a}-e^{a})\}=0,
\end{equation*}
which together with $\partial_{r} H \geq \tfrac12$ and the assumption $\gamma(1+\gamma)^{-1}\neq e^{-a}$ 
means that $\|\rho_e^*\|_{L^1}=0$.  
We also see from \er{converge5} and $\rho_e(s_n)\geq 0$ that
\begin{equation*}
\|\rho_e(s_n)\|_{L^1}=\int_1^2 1\cdot \rho_e(s_n,r)\,dr \to 
\int_1^2 1\cdot \rho_e^*(r)\,dr=0 \quad \text{as $n\to\infty$}. 
\end{equation*}
It follows that $\|\rho_e(s_n)\|_{L^1} \to 0$  {\it for the whole original sequence}.
Together with \er{Kny} we also get $\rho_i(s_n)\to 0$ uniformly in $\bar I$.  
This completes the proof of the proposition.
\end{proof}
It seems possible that $\rho_e$ also tends to zero in the uniform norm but we have not succeeded to prove it.  


\section{Sparking Voltage with Dominant Cathode Emission}\label{SVDCE}

This section is devoted to proving Theorem \ref{SparkingVoltage}. 
Note that $g(\frac{2\la}{r^2})  = a\frac{2\la}{r^2} e^{-\frac{br^2}{2\la}} - \frac{\la^2}{r^4}$ is negative in $\Gamma=\left\{ (a, b, \gamma) \, | \,  \gamma>\tfrac 1a,\ \ b>\tfrac 4ea\right\}$.
Indeed, the condition $b>\frac4ea$ implies that 
\begin{gather}\label{g>0}
g(s)=a\ell e^{-\frac b \ell}-\frac{\ell^2}{4} = \ell^{2}\left( \frac{a}{\ell e^{\frac b \ell}}-\frac14\right) < 0  \quad  \text{for $\ell>0$},
\end{gather}
where the inequality follows directly from the fact that the minimum of $\ell e^{\frac b \ell}$ occurs at $\ell=b$ with the value $eb$.  

Recall that the solution $\varphi_{e}$ of the electron system \eqref{eqU} is unique up to a constant factor if it exists.  
Because $g<0$, any non-trivial solution must satisfy $\varphi_{e}'(2)\neq 0$. 
(Indeed, if $\varphi_{e}'(2)=0$, then $\varphi_{e}\equiv0$,  
as we see by multiplying the equation \eqref{eqUa} by $r^{2} \varphi_{e}$ and integrating.)    
Therefore,  normalizing $\varphi_{e}$ by $\varphi_{e}'(2)$, we see that the electron system \eqref{eqU} is equivalent to the following problem:
\begin{subequations}\label{equ1}
\begin{gather}
u'' + \frac 2ru' - \frac{\la^2}{r^4}u  + \frac{2a}{r^2}\la e^{-br^2/2\la}u =
u'' + \frac 2ru' +g\left(\frac{2\la}{r^2}\right) u =0,  
\label{equ1a}\\
u(1)=0,\quad u'(2)=1, 
\label{equ1b}\\
 u(r)>0\ \ \text{ in }\ I,
\label{equ1c} \\
B(\la,u) =  u'(2) + \tfrac\lambda{4} u(2) - \gamma \int_1^2  p(r,\lambda) u(r) dr=0, 
\quad \text{where }\ 
p(r,\la) = a\tfrac\la 2 e^{-\frac\la 2} e^ {\frac\la r} e^{-\frac{br^2}{2\la}}.
\label{equ1d}
\end{gather}
\end{subequations}
Now we are in a position to prove that the sparking parameter set $\cS$ contains $\Gamma$, as stated in Theorem \ref{SparkingVoltage}.
\begin{lem}\label{PhiinA}
For every $(a, b, \gamma)\in \Gamma$, there exists a pair $(\lambda,\varphi_{e}) \in \mathbb R_{+} \times H^2(I)$ that solves  the electron system  \eqref{eqU}.
\end{lem}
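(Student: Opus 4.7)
The plan is to reduce the existence of a pair $(\lambda,\varphi_e)$ solving \eqref{eqU} to a one-dimensional shooting problem in $\lambda$, and then invoke the intermediate value theorem. Since $(a,b,\gamma)\in\Gamma$ implies $b>4a/e$, the function $g(\ell)$ is strictly negative for every $\ell>0$ by \eqref{g>0}. For each $\lambda>0$ the initial value problem
\[
(r^2 u')' + r^2 g(2\lambda/r^2)\,u = 0,\qquad u(1)=0,\quad u'(1)=1,
\]
therefore admits a unique solution $u_\lambda\in C^2([1,2])$, and a direct comparison (using $-g>0$ and $u_\lambda'(1)>0$) forces $u_\lambda>0$ and $u_\lambda'>0$ throughout $(1,2]$. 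Standard continuous-dependence theory gives continuity of $\lambda\mapsto u_\lambda$ in $C^2([1,2])$.

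The next step is an integration-by-parts identity. Rewriting the ODE as $(r^2u')'-(\lambda^2/r^2)u=-2a\lambda\,e^{-br^2/(2\lambda)}u$, multiplying by $e^{\lambda/r}$ and integrating twice over $[1,2]$ (using $u_\lambda(1)=0$) yields
\[
e^{\lambda/2}\,u_\lambda'(1) \;=\; 4\!\left[u_\lambda'(2)+\tfrac{\lambda}{4}u_\lambda(2)\right] + 4\int_1^2 p(r,\lambda)\,u_\lambda(r)\,dr.
\]
Write $S(\lambda):=u_\lambda'(2)+\tfrac{\lambda}{4}u_\lambda(2)$ and $Q(\lambda):=\int_1^2 p(r,\lambda)u_\lambda(r)\,dr$. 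With the normalization $u_\lambda'(1)=1$, the identity reads $S(\lambda)+Q(\lambda)=e^{\lambda/2}/4$, and the nonlocal condition \eqref{equ1d}, namely $S=\gamma Q$, is equivalent to
\[
R(\lambda) := e^{-\lambda/2}\,Q(\lambda) \;=\; \frac{1}{4(1+\gamma)}.
\]

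For the two limits of $R$: as $\lambda\to 0^+$, the factor $e^{-br^2/(2\lambda)}$ in $p(r,\lambda)$ decays faster than any power of $\lambda$ while $u_\lambda\to 1-1/r$ in $C^2$, so $Q(\lambda)\to 0$ and hence $R(\lambda)\to 0$. For $\lambda\to\infty$, the key substitution
\[
u_\lambda(r) \;=\; \frac{1}{2\lambda}\,e^{\lambda(1-1/r)}\,e^{-a(r-1)}\,\chi_\lambda(r)
\]
converts the ODE into a singularly perturbed equation for $\chi_\lambda$ in which the coefficient of $\chi'$ equals $1+O(1/\lambda)$, supplemented by $\chi_\lambda(1)=0$ and $\chi_\lambda'(1)=2\lambda$ (the latter enforcing $u_\lambda'(1)=1$). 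A matched asymptotic expansion then yields the outer profile $\chi_\lambda\to 1$ uniformly on $[1+\delta,2]$ for every $\delta>0$, together with an inner boundary layer $\chi_{\mathrm{inner}}(\xi)=1-e^{-2\xi}$ in the stretched variable $\xi=\lambda(r-1)$. Since $p(r,\lambda)u_\lambda(r)=(a/4)\,e^{\lambda/2}e^{-br^2/(2\lambda)}e^{-a(r-1)}\chi_\lambda(r)$ and the boundary layer only contributes $O(1/\lambda)$ to $R$, the bulk integral gives
\[
\lim_{\lambda\to\infty}R(\lambda) \;=\; \int_1^2 \frac{a}{4}\,e^{-a(r-1)}\,dr \;=\; \frac{1-e^{-a}}{4}.
\]

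The hypothesis $\gamma>1/a$ combined with $e^a-1>a$ (for $a>0$) gives $\gamma>1/(e^a-1)$, which rearranges to $1/[4(1+\gamma)]<(1-e^{-a})/4$. Thus the target value lies strictly in the open interval $(R(0^+),R(\infty))=(0,(1-e^{-a})/4)$, and the intermediate value theorem supplies $\lambda^\dagger\in(0,\infty)$ with $R(\lambda^\dagger)=1/[4(1+\gamma)]$. The pair $(\lambda^\dagger,u_{\lambda^\dagger})$ then solves \eqref{eqU} and $u_{\lambda^\dagger}\in C^2([1,2])\subset H^2(I)$. The main obstacle is the $\lambda\to\infty$ asymptotic: making the matched expansion rigorous requires uniform error bounds for $\chi_\lambda-1$ on the bulk region, which I would obtain either via Liouville--Green techniques or by constructing explicit sub- and super-solutions for $\chi_\lambda$ on $[1+\delta,2]$, combined with a direct $O(1/\lambda)$ estimate for the boundary-layer contribution to $Q$.
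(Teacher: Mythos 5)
Your overall strategy is sound and takes a genuinely different route from the paper. The Lagrange-identity step is correct: pairing the equation with the explicit homogeneous solution $e^{\lambda/r}$ of the reduced operator $(r^{2}u')'-\lambda^{2}r^{-2}u$ does yield
$e^{\lambda/2}u'(1)=4\bigl[u'(2)+\tfrac{\lambda}{4}u(2)\bigr]+4\int_{1}^{2}p\,u\,dr$,
so with the normalization $u'(1)=1$ the nonlocal condition \eqref{equ1d} is indeed equivalent to the single scalar equation $R(\lambda)=\tfrac{1}{4(1+\gamma)}$. The positivity argument, the $\lambda\to0^{+}$ limit, and the target inequality $\tfrac{1}{4(1+\gamma)}<\tfrac{1-e^{-a}}{4}$ (which in fact only requires $\gamma>1/(e^{a}-1)$, weaker than $\gamma>1/a$) are all correct. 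By contrast, the paper normalizes $u'(2)=1$, compares $u$ with the explicit solution $U$ of the equation with the $h$-term dropped, proves $u\ge U$ by a maximum-principle argument and $\|u-U\|_{H^{1}}=O(\lambda^{-1})$ by an energy estimate, and then only needs the \emph{sign} $B(\lambda,U)<0$ for $\lambda\gg1$, never an exact limit.

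The genuine gap is the $\lambda\to\infty$ asymptotics, which you yourself flag as ``the main obstacle'' and leave as a plan rather than a proof. The formal matched expansion (outer profile $e^{-a(r-1)}$, inner layer $1-e^{-2\xi}$, limit $R(\infty)=(1-e^{-a})/4$) is internally consistent --- it agrees with the exact solution of the constant-coefficient model $\epsilon w''+w'+aw=0$ --- but for the actual variable-coefficient equation $\tfrac{1}{2\lambda}(r^{2}w')'+w'+a\,e^{-br^{2}/(2\lambda)}w=0$ nothing is established: no uniform bound on $\chi_{\lambda}$, no error estimate on the bulk region, no control of the layer's contribution to $Q$. Since this is precisely the technically hard part of the lemma (everything else is soft), the proof is incomplete as written. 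Note that the intermediate value argument only needs the one-sided bound $\varliminf_{\lambda\to\infty}R(\lambda)>\tfrac{1}{4(1+\gamma)}$, so a subsolution giving a lower bound on $u_{\lambda}$ (in the spirit of the paper's $u\ge U$, suitably modified to carry the $e^{-a(r-1)}$ factor) would suffice and is likely easier to make rigorous than a full two-sided matched expansion.
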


\begin{proof}    
We will work with the equivalent system  \eqref{equ1}.
For any $\la>0$, we denote any solution of the boundary value problem \eqref{equ1a}--\eqref{equ1b} by $u=u(r)=u(r,\la)$.
Because $g<0$ as we showed in \eqref{g>0}, 
the spectrum of the operator $-\Delta-g(\cdot)$ with the boundary conditions 
$u(1)=0$ and $u'(2)=0$ belongs entirely to the positive real axis $(0,\infty)$.  
Hence for any $\la>0$ the solution $u(r)$ of \eqref{equ1a}--\eqref{equ1b} exists and is unique.

We claim that the solution $u$ of the boundary value problem \eqref{equ1a}--\eqref{equ1b} also satisfies \eqref{equ1c}, that is, $u$ is positive on $I$ for all $\la>0$.
On the contrary, suppose that $u(s) \leq 0$ holds for some $s \in I$.
We first treat the case $u(s) < 0$.
Due to $u'(2)>0$, $u$ attains a minimum at some $s_{0} \in I$.
Therefore, $u(s_{0})<0$, $u'(s_{0})=0$, and  $u''(s_{0}) \geq 0$.
Evaluating the equation at $r=s_{0}$, we have a contradiction
\begin{gather*}
0 \leq u''(s_{0})  =-g u(s_{0}) <0.
\end{gather*}
We next treat the possibility that $u(s) = 0$.
There are two possibilities $u\equiv 0$ on $[1,s]$ and $u\not\equiv 0$ on $[1,s]$.
In the former case, we arrive at the contradiction $u\equiv 0$ on $I$.
In the latter case, there is either a maximum at $s_{0} \in (1,s)$ or a minimum at $s_{0} \in (1,s)$.
If we have a minimum, we arrive at a contradiction as we did above.
If we have a maximum, then $u(s_{0})>0$, $u'(s_{0})=0$, and  $u''(s_{0}) \leq 0$ hold.
Evaluating the equation at $r=s_{0}$, we have the contradiction
\begin{gather*}
0 \geq u''(s_{0})  =-g u(s_{0}) >0.
\end{gather*}
Thus $u > 0$ holds on $I$.

It remains to find $\lambda>0$ so that the solution $u=u(r,\lambda)$ of the boundary value problem \eqref{equ1a}--\eqref{equ1b} satisfies \eqref{equ1d}. 
For this purpose, we define the auxiliary function $U$ for any $\lambda>0$ by
\[
U = U(r,\la): = \alpha_{\lambda} \left\{ \frac{1}{\la} e^{\frac{1}{2} \la}e^{-\la/r} -  \frac{1}{\la} e^{-\frac{3}{2}\la}e^{\la/r}  \right\}, \quad  \text{where \ $\alpha_{\lambda}:=\left(\frac{1}{4} + \frac{1}{4}e^{-\la}\right)^{-1}$.}
 \]
It is positive in $(1,2]$ and is a solution of the simple boundary value problem   
\[  
U''+\frac2r U' - \frac{\la^2}{r^4}U=0, 
\quad  U(1)=0,\quad  U'(2)=1.  
\]
We claim that $B(\la,U)<0$ for $\la \gg 1$. 
Indeed, direct computation shows that  
\begin{align*}    
B(\la,U)  &=  U'(2) + \frac\la{4} U(2) - \gamma \int_1^2  p(r,\la)\cdot  U(r) dr
\\
& = \alpha_{\lambda} \left\{ \frac{1}{4}  + \frac{1}{4} e^{- \la} +\frac{1}{4}  - \frac{1}{4} e^{- \la}
-\frac12 \gamma a e^{-2\la}  \int_1^2  e^{\frac\la r - \frac{br^2}{2\la}} \cdot \left( e^{(2-\frac1r)\la} -e^{\frac{\lambda}{r}} \right)dr
\right\}
\\
& = \alpha_{\lambda} \left\{ \frac{1}{2} -\frac12  \gamma a e^{-2\la }  \int_1^2  e^{\frac\la r - \frac{br^2}{2\la}} \cdot \left( e^{(2-\frac1r)\la} -e^{\frac{\lambda}{r}} \right)dr   
\right\}.
\end{align*}
We observe that one integral term is 
\begin{align*}
-\frac12  \gamma ae^{-2\la}  \int_1^2  e^{\frac\la r - \frac{br^2}{2\la}} \cdot e^{(2-\frac1r)\la} dr
&= -\frac12  \gamma a     \int_1^2 e^{-\frac{br^2}{2\la}} dr 
\\
& \leq  -\frac12 \gamma a    \int_1^2 \left( 1 - \frac{br^2}{2\la} \right) dr 
= -\frac12 \gamma a    \left(1-\frac{7b}{6\la}\right).  
\end{align*}  
The other integral term is estimated as 
\begin{align*}
\frac12 \gamma a e^{-2\la }  
& \int_1^2  e^{\frac\la r - \frac{br^2}{2\la}} \cdot e^{\frac{\lambda}{r}} dr \leq \frac12  \gamma a  e^{-2\la  }    \int_1^2  e^{2\frac\la r }dr
\\
& \leq  \frac12  \gamma a  e^{-2\la  }   
 \left(  \int_1^{1+\delta}  e^{2\la}dr + \int_{1+\delta}^2  e^{\frac {2\la}{1+\delta} }dr  \right)
= \frac12 \gamma a \left(\delta + (1-\delta) e^{-2\la\delta/(1+\delta)  }\right).
\end{align*}
Putting these estimates into the expression for $B(\la,u)$, we see that
\begin{align*}
B(\lambda,U) & \leq  \frac{\alpha_{\lambda}}{2}   \left\{ 1  - \ga a \left[1 -\tfrac{7b}{6\la} 
-\delta - (1-\delta) e^{-2\la \delta/(1+\delta)} \right] \right\}
\notag \\ 
&=  \frac{\alpha_{\lambda}}2\left\{  1-\ga a + \ga a\delta + O\left(\frac1\la\right)  \right\}.  
\end{align*}
Now taking $\delta$ to be small enough and $\la$ to be large  enough and using  the assumption $\gamma a> 1$, 
we obtain 
\begin{gather*}
B(\la,U) < 0 \quad \text{for $\la\gg 1$}.  
\end{gather*}
This proves the claim.  
In particular,  $B(\lambda,U)=O(1)$ for $\la \gg 1$ owing to $2 \leq  \alpha_{\lambda} \leq 4$.

Next, we show that $u$ is near $U$ for large $\la$, in the sense that 
\begin{gather}\label{H11}
\|(u-U)'\|_{L^2}^{2} + \lambda^{2} \|u-U\|_{L^2}^{2} \leq \frac{C}{\la^{2}}  \quad \text{for $\la\gg 1$},
\end{gather}
where $C$ is a constant independent of $\la$.
Indeed, it is clear that the difference $V:=u-U$ solves
\begin{gather}
V'' + \frac 2r V' - \frac{\la^2}{r^4}V  + \frac{2a}{r^2}\la e^{-br^2/2\la} V = -\frac{2a}{r^2}\la e^{-br^2/2\la}U,
\label{Veq1}\\
V(1)=0,\quad V'(2)=0.
\label{Vbc1}
\end{gather}
Recall the weighted inner product defined in \eqref{inner-product1}.
We multiply \eqref{Veq1} by $r^{2}V$, integrate it over $\bar{I}$, and use \eqref{Vbc1} to obtain
\begin{gather*}
\| V'\|_{L^{2}}^{2}+\la^{2} \| r^{-2} V\|^{2}_{L^{2}} \leq C \la \|V\|_{L^{2}}^{2} + C\la \|U\|_{L^{2}}\|V\|_{L^{2}}.
\end{gather*}
Using this and $ \|U\|_{L^{2}}=O(\la^{-1})$, and considering only sufficiently large $\la$, we arrive at \eqref{H11}.

Now we claim that $u\ge U\ge0$.  
Indeed, this is the same as $V\ge0$.  Supposing on the contrary that  
 $V(s) < 0$  for some $s \in (1,2]$, 
then $V$ would attain its minimum at $s_{0} \in (1,2]$.
Therefore $V(s_{0})<0$, $V'(s_{0})=0$, and  $ V''(s_{0}) \geq 0$.
Evaluating the differential equation at $r=s_{0}$ and using $g<0$, we have the contradiction
\begin{gather*}
0 \leq  V''(s_{0})  =-g V(s_{0})  -\frac{2a}{s_{0}^2}\la e^{-b s_{0}^2/2\la}U(s_{0}) <0 . 
\end{gather*}
Thus $u \geq U$.  

We next observe that 
\begin{align*}
B(\la,u) &:= u'(2) + \frac\lambda{4} u(2) - \gamma \int_1^2  p(r,\lambda) u(r) dr
\\
&=U'(2) + \frac\lambda{4} U(2) - \gamma \int_1^2  p(r,\lambda) U(r) dr + \frac\lambda{4}  (u(2)-U(2)) - \gamma \int_1^2  p(r,\lambda) (u-U)(r) dr
\\
&\leq U'(2) + \frac\lambda{4} U(2) - \gamma \int_1^2  p(r,\lambda) U(r) dr 
+ \frac\lambda{4}  V(2) 
\\
&\leq  B(\lambda,U)+ C\lambda^{-\frac12},
\end{align*}  
because 
$(V(2))^2 \le C\|V\|_{L^2}  \|V'\|_{L^2}  \le C\frac1\la \frac1{\la^2} = C\la^{-3}$,   
where we have used   \eqref{H11} for the last inequality. 
Since $B(\la,U)<0$ for $\la\gg 1$, we must also have $B(\la,u)<0$ for $\la\gg1$.  

On the other hand, consider the simple case $\la=0$.  For that case,  $u(r,0) = 2-\frac2r$, so that 
$B(0,u) = u'(2,0)=\frac12>0$.  
Thus $B$ is positive at $\la=0$ and negative at sufficiently large $\la$.  
Moreover $B(\la,u(\cdot,\la))$ is a continuous function of $\la$ for $0\le\la<\infty$.  
We infer that there exists $\la_1>0$ such that 
$B(\la_1,u(\cdot,\la_1))=0$.  
Thus $u(\cdot,\la_1)$ satisfies \eqref{equ1} and \er{Veq1}, 
which means that $(a,b,\ga)\in\cS$. 
This is exactly what we wanted to prove.     \end{proof}

Recall that the electron system \eqref{eqU} is equivalent to \eqref{equ1}. 
Furthermore, Lemmas \ref{Null1} and \ref{codim1} ensure that ${\rm dim} \, N(\sL)={\rm dim} \, N(\cL^{*})=1$ for $\la=\la^\dagger$, and also that there exists a function $u^{\dagger}$ solving \eqref{equ1} with $\la=\la^\dagger$. The solution $\psi_{e}$ of \eqref{PhiEq1} is one-dimensional. 
Then we see that $\psi_{e}(2)\neq 0$.
Indeed, if $\psi_{e}(2)=0$ holds, $\psi_{e}\equiv0$ follows from \eqref{PhiEq1} and $g<0$, and thus it contradicts to the fact that the solution is one-dimensional.
It is clear that $w:=\frac{\psi_{e}}{\psi_{e}(2)}$ solves the boundary value problem
\begin{subequations}              \label{wEq1}
\begin{gather}
-\tfrac{1}{r^{2}} \partial_{r} ( r^{2} \partial_{r} w )-g(\lambda \partial_{r} H)w
-h(\lambda\partial_{r} H)e^{-\frac{\lambda}{2}H} {\gamma} e^{\frac{\la}{2}}=0,
\label{wEq2}\\
w(1)=\partial_{r} w(2) + \tfrac{\lambda}{4} =0.
\end{gather}
\end{subequations}
Due to $g<0$ in $\Gamma$, the spectrum of the operator $-\Delta-g(\cdot)$ with the boundary conditions 
$w(1)=0$ and $\partial_{r} w(2)=0$ belongs entirely to the positive real axis $(0,\infty)$.  
Hence for any $\la>0$ the solution $w(r,\la)$ of \eqref{wEq1} exists and is unique.  
Now the transversality condition $F(a, b, \gamma) \neq 0$ at the sparking voltage $\lambda^{\dagger}$ can be written as $\widetilde{F}(a,b,\gamma) \neq 0$, where
\begin{align*}    
\widetilde{F}(a,b,\gamma)&:=-{\gamma} e^{\frac{\la^\dagger}{2}}
\int_1^2 r^{2}  \left\{ h'\left( \la^\dagger \partial_{r} H \right) (\partial_{r} H) - h\left( \la^\dagger \partial_{r} H \right)\frac{H}{2} \right\}e^{-\frac{\la^\dagger}{2}H} u^{\dagger} \,dr 
   \notag\\
&\quad -\int_1^2 r^{2} w g'(\la^\dagger \partial_{r} H) (\partial_{r} H) u^{\dagger} \, dr+        
u^{\dagger}(2) - 2\partial_r u^{\dagger} (2)  - \frac{\lambda}  {2} u^{\dagger} (2).
\end{align*}

\begin{lem}
For almost every $(a, b, \gamma)\in \Gamma$, the transversality condition $F(a, b, \gamma) \neq 0$ holds at the sparking voltage $\lambda^{\dagger}=\lambda^{\dagger}(a,b,\gamma)$.
\end{lem}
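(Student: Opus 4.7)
The plan is to exploit the linear dependence of the functional $B$ on $\gamma$ and then to invoke Sard's theorem. Let $u=u(r,\lambda;a,b)$ be the unique solution of the linear two-point problem \eqref{equ1a}--\eqref{equ1b}. By the positivity argument in the first part of the proof of Lemma~\ref{PhiinA}, $u$ is real-analytic jointly in $(r,\lambda,a,b)$ and strictly positive on $I$ for every $\lambda>0$ and every $(a,b)$ with $b>4a/e$. I would then introduce
\[
\Phi(\lambda,a,b,\gamma):=B(\lambda,u(\cdot,\lambda;a,b))=1+\tfrac{\lambda}{4}u(2,\lambda)-\gamma\int_1^2 p(r,\lambda)u(r,\lambda)\,dr,
\]
which is real-analytic and \emph{linear} in $\gamma$. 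Since the denominator below is strictly positive, $\Phi=0$ is equivalent to $\gamma=G(\lambda,a,b)$ with
\[
G(\lambda,a,b):=\frac{1+\lambda u(2,\lambda)/4}{\int_1^2 p(r,\lambda)u(r,\lambda)\,dr},
\]
and the sparking voltage is then $\lambda^{\dagger}=\min\{\lambda>0:G(\lambda,a,b)=\gamma\}$.

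The first step is the algebraic identity $\widetilde{F}(a,b,\gamma)=4\,\partial_{\lambda}\Phi(\lambda^{\dagger},a,b,\gamma)$. To prove it, I would differentiate $\Phi$ in $\lambda$; since $u(1,\lambda)\equiv 0$ and $u'(2,\lambda)\equiv 1$ hold identically, $\partial_{\lambda}u(1)=\partial_{\lambda}u'(2)=0$, and the function $v:=\partial_{\lambda}u$ satisfies $r^{-2}(r^{2}v')'+g(\lambda\partial_r H)v=-g'(\lambda\partial_r H)(\partial_r H)u$ with $v(1)=v'(2)=0$. Pairing this equation against $w:=\psi_e/\psi_e(2)$ in the weighted inner product \eqref{inner-product1}, integrating by parts twice, and using the adjoint equation \eqref{wEq2} together with $w(1)=0$, $w'(2)=-\lambda/4$, and the identity $\gamma e^{\lambda/2} r^{2} h(\lambda\partial_r H)e^{-\lambda H/2}=4\gamma p$, one obtains
\[
4\gamma\int_1^2 p\,\partial_{\lambda}u\,dr=\lambda\partial_{\lambda}u(2)+\int_1^2 r^{2} w\, g'(\lambda\partial_r H)(\partial_r H)\,u\,dr.
\]
Substituting this into $\partial_{\lambda}\Phi$, rewriting the first integral of $\widetilde F$ via the elementary identity $1+br^{2}/(2\lambda)-\lambda+\lambda/r=\lambda\partial_{\lambda}\ln p-\lambda/2$, and finally eliminating the residual constant term via $\gamma\int_1^2 p u\,dr=1+\lambda^{\dagger}u(2)/4$ (which holds at $\lambda^{\dagger}$), all six terms match after cancellation.

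Having established the identity, I would factor $\Phi=\bigl(\int_1^2 p u\,dr\bigr)(G-\gamma)$ and differentiate in $\lambda$ at $\lambda^{\dagger}$, where $G=\gamma$, to obtain $\partial_{\lambda}\Phi(\lambda^{\dagger})=\bigl(\int_1^2 p u\,dr\bigr)\,\partial_{\lambda}G(\lambda^{\dagger},a,b)$. Since the prefactor is strictly positive, $\widetilde{F}(a,b,\gamma)\neq 0$ is equivalent to $\gamma$ being a regular value of the real-analytic map $\lambda\mapsto G(\lambda,a,b)$ at $\lambda^{\dagger}$.

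Finally, fix $(a,b)$ with $b>4a/e$. Since $G(\cdot,a,b)$ is real-analytic and in particular $C^{1}$, Sard's theorem gives that its set of critical values is a Lebesgue-null subset of $\mathbb{R}$. Hence the $\gamma$-slice $\{\gamma>1/a:\widetilde F(a,b,\gamma)=0\}$ is contained in that null set, and Fubini's theorem yields that $\{(a,b,\gamma)\in\Gamma:\widetilde F(a,b,\gamma)=0\}$ has three-dimensional Lebesgue measure zero. The main obstacle in the plan is the algebraic identification $\widetilde F=4\partial_{\lambda}\Phi(\lambda^{\dagger})$, which requires the careful integration-by-parts computation sketched above; once that is in hand, the reduction to Sard's theorem via the regular-value characterization is immediate.
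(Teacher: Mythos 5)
Your proof is correct, and it takes a genuinely different route from the paper. The paper splits the argument in two: it sets $Z=\{(a,b,\gamma)\in\Gamma:\partial_\lambda B(\lambda^\dagger,a,b,\gamma)=0\}$, shows $Z$ is null by eliminating $\gamma$ between \eqref{B=0} and \eqref{B'=0} and invoking Fubini, shows $\lambda^\dagger$ is real-analytic on $\Gamma\cap Z^c$ by the implicit function theorem, and then appeals to the fact that the zero set of a non-trivial analytic function is null to dispose of $\{\widetilde F=0\}\cap Z^c$. Your argument instead rests on the identity $\widetilde F=4\,\partial_\lambda B(\lambda^\dagger)$, which I checked: writing $e^{\lambda/2}r^2h(\lambda\partial_rH)e^{-\lambda H/2}=4p$ turns the first integral of $\widetilde F$ into $-4\gamma\int(\partial_\lambda p)u\,dr+2\gamma\int pu\,dr$, the pairing of $v=\partial_\lambda u$ (with $v(1)=v'(2)=0$) against $w$ in the weight \eqref{inner-product1} using \eqref{wEq1} gives exactly $4\gamma\int pv\,dr=\lambda v(2)+\int r^2wg'(\lambda\partial_rH)(\partial_rH)u\,dr$, and the constant terms cancel via \eqref{eqUc} at $\lambda^\dagger$; both sides reduce to $u(2)-4\gamma\int(\partial_\lambda p)u\,dr-\int r^2wg'(\lambda\partial_rH)(\partial_rH)u\,dr$. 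This identity shows that $\{\widetilde F=0\}$ \emph{coincides} with the paper's set $Z$, so your Sard-plus-Fubini step on the slices $\lambda\mapsto G(\lambda,a,b)$ finishes the proof in one stroke. What your route buys: it makes transparent that transversality is precisely the simplicity of $\lambda^\dagger$ as a root of $B(\cdot,a,b,\gamma)$, it replaces the paper's somewhat informal elimination argument for the nullity of $Z$ with the standard Sard theorem, and it removes the need (implicit in the paper's last step) to verify that $\widetilde F\not\equiv0$ on components of $\Gamma\cap Z^c$. The cost is the integration-by-parts computation, which must be carried out in full in a final write-up; also note that your auxiliary symbol $\Phi$ clashes with the paper's electrostatic potential and should be renamed, and the Fubini step should record that the set of critical values of $G$ is Borel (an $F_\sigma$, being a countable union of continuous images of compacts) so that the product set is measurable.
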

\begin{proof}
It suffices to show $\widetilde{F}(a,b,\gamma) \neq 0$ for almost every $(a, b, \gamma)\in \Gamma$.
Let us denote the unique solution of  \eqref{equ1a}--\eqref{equ1b} by $u(r,\la; a,b)$. It is an analytic function for $(\la;a,b) \in \mathbb R_{+}^{3}$ by analyticity of solutions to BVPs with respect to parameters.  
Then we see directly from \er{eqUc} that $B(u,\la;a,b,\ga)$ is an analytic function of all its variables $(\la;a,b,\ga)$.  
Let us also denote the unique solution of \er{wEq1} by $w(r,\la; a,b,\gamma)$. It is an analytic function for $(\la;a,b,\gamma) \in \mathbb R_{+}^{4}$.

Clearly the set $\Gamma$ is open in $\real^3$.  Note that $B(\la^\dagger,a,b,\ga)=0$. 
Now let 
$$ Z = \left\{(a,b,\gamma)\in \Gamma \, ; \, 
\frac{\pa B}{\pa \lambda}(\lambda^\dagger,a,b,\gamma)=0 \right\},$$  
where $\lambda^\dagger = \lambda^\dagger(a,b,\gamma)$.  
We claim that $\Gamma\cap Z^{c}$ is an open set.  
For each point $(a,b,\gamma)\in \Gamma \cap Z^c$, 
we can apply the real-analytic version of the implicit function theorem 
to the equation $B(\lambda^\dagger,a,b,\gamma)=0$.
There is a neighborhood of $(a,b,\gamma)$
in which the function $\lambda^\dagger$ is real-analytic and
$\frac{\pa B}{\pa \lambda}(\lambda^\dagger,a,b,\gamma)\ne0$.
Thus $\Gamma\cap Z^c$ is open.  
Furthermore, $\lambda^\dagger: \Gamma \cap Z^c\to\real$ is a real-analytic function for which  
$\frac{\pa B}{\pa \lambda}(\lambda^\dagger,a,b,\gamma)$ does not vanish.

Next we claim that the $Z$ also has $\real^3$-measure zero.
Within $Z$ both of the equations, $B=0$ and $\frac{\pa B}{\pa \lambda}=0$, are satisfied by $(\lambda^\dagger, a,b,\gamma)$.  
Using these equations, we have
\begin{align}
\gamma &= \left\{u'(2,\lambda^\dagger;a,b)  + \frac{\lambda^{\dagger}}{4} u(2,\lambda^\dagger;a,b) \right\} \left( \int_1^2  p(r,\lambda^\dagger;a,b) u(r,\lambda^\dagger;a,b) dr \right)^{-1}, 
\label{B=0}\\  
0   &=   \frac{\pa u'}{\pa \lambda}(2,\lambda^\dagger;a,b) 
+ \frac{\lambda^{\dagger}}{4}  \frac{\pa u}{\pa \lambda}(2,\lambda^\dagger;a,b)
+ \frac{1}{4}u(2,\lambda^\dagger;a,b)
\notag \\
& \quad - \gamma \int_1^2   \frac{\pa p}{\pa \lambda}(r,\lambda^\dagger;a,b) u(r,\lambda^\dagger;a,b) dr
- \gamma \int_1^2  p(r,\lambda^\dagger;a,b)  \frac{\pa u}{\pa \lambda}(r,\lambda^\dagger;a,b) dr.
\label{B'=0}
\end{align} 
We plug \eqref{B=0} into $\gamma$ in \eqref{B'=0}. Then the resultant equation contains no explicit $\gamma$, and is a single equation for $(\lambda^\dagger,a,b)$.  
Thus, within $Z$, the function $\lambda^\dagger$ depends only on $(a,b)$.  
Hence, using \er{B=0} within $Z$, we see that the variable $\gamma$ is determined uniquely by $(a,b)$.    
So, due to the Fubini--Tonelli theorem, $Z$ has $\real^3$-measure zero.

Recall that $\lambda^\dagger: \Gamma \cap Z^c\to\real$ is real-analytic.
It follows that the set  $\{(a,b,\gamma)\in \Gamma \cap Z^c \, ; \, \widetilde{F}(a,b,\gamma)=0\}$ also has measure zero 
because the zero set of any analytic function $\not\equiv0$ must have measure zero.  
The proof is complete.
\end{proof}

\end{document}